\DeclareFontFamily{U}{mathx}{\hyphenchar\font45}
\DeclareFontShape{U}{mathx}{m}{n}{
	<5> <6> <7> <8> <9> <10>
	<10.95> <12> <14.4> <17.28> <20.74> <24.88>
	mathx10
}{}
\DeclareSymbolFont{mathx}{U}{mathx}{m}{n}
\DeclareMathAccent{\widecheck}{0}{mathx}{"71}
\def\citep#1#2{\cite[{#1}]{#2}}
\newcommand{\C}{{\mathbb{C}}} 
\newcommand{\F}{{\mathbb{F}}} 
\newcommand{\N}{{\mathbb{N}}} 
\newcommand{\R}{{\mathbb{R}}} 
\newcommand{\Z}{{\mathbb{Z}}} 
\newcommand{\FF}{{\mathbb{F}}} 
\DeclareSymbolFont{bbold}{U}{bbold}{m}{n}
\DeclareSymbolFontAlphabet{\mathbbold}{bbold}
\newcommand{\bsg}{{\boldsymbol{g}}}
\newcommand{\bsk}{{\boldsymbol{k}}}
\newcommand{\bsu}{{\boldsymbol{u}}}
\newcommand{\bsv}{{\boldsymbol{v}}}
\newcommand{\bsx}{{\boldsymbol{x}}}
\newcommand{\bszero}{{\boldsymbol{0}}} 
\newcommand{\bsone}{{\boldsymbol{1}}}  
\newcommand{\bsgamma}{{\boldsymbol{\gamma}}}
\newcommand{\bseta}{{\boldsymbol{\eta}}}
\newcommand{\calD}{{\mathcal{D}}}
\newcommand{\calO}{{\mathcal{O}}}
\newcommand{\setu}{{\mathfrak{u}}}
\newcommand{\mcol}{{\mathpunct{:}}}
\newcommand{\wal}{{\rm wal}}
\newcommand{\icomp}{\mathrm{i}}
\newcommand{\abs}[1]{\left\vert#1\right\vert}
\newcommand{\norm}[1]{\left\Vert#1\right\Vert}
\newcommand{\floor}[1]{\left\lfloor #1 \right\rfloor} 
\newcommand{\rd}{\,\mathrm{d}} 
\providecommand{\argmin}{\operatorname*{argmin}}
\newcommand{\supp}{\operatorname{supp}}
\newcommand{\tr}{{\rm tr}}
\newcommand{\tpmod}[1]{{\;(\operatorname{mod}\;#1)}}
\providecommand*{\toclevel@author}{999}
\providecommand*{\toclevel@title}{0}
\theoremstyle{plain}
\newtheorem{theorem}{Theorem}
\newtheorem{proposition}{Proposition}
\newtheorem{lemma}{Lemma}
\theoremstyle{definition}
\newtheorem{definition}{Definition}
\theoremstyle{remark}
\newtheorem{remark}{Remark}
\pgfplotsset{every tick label/.append style={font=\scriptsize}}
\newenvironment{customlegend}[1][]{
	\begingroup
	\csname pgfplots@init@cleared@structures\endcsname
	\pgfplotsset{#1}
}{
	\csname pgfplots@createlegend\endcsname
	\endgroup
}
\def\addlegendimage{\csname pgfplots@addlegendimage\endcsname}
\pgfplotsset{
	cycle list={%
		{draw=black,mark=star,solid},
		{draw=black, mark=square,solid},
		{draw=black,mark=+,solid},
		{black,mark=o},
		{draw=black, mark=none,solid}}
}
\definecolor{mycolor-alpha1}{rgb}{0.30000,0.45000,0.85000}
\definecolor{mycolor-alpha2}{rgb}{0.05000,0.65000,0.20000}
\definecolor{mycolor-alpha3}{rgb}{0.64000,0.22500,0.75000}
\definecolor{mycolor1}{rgb}{0.48500,0.70000,1.00000}
\definecolor{mycolor2}{rgb}{0.00000,0.20000,0.50000}
\definecolor{mycolor3}{rgb}{0.95000,0.15000,0.05000}
\definecolor{mycolor4}{rgb}{0.10000,0.80000,0.15000}
\definecolor{mycolor5}{rgb}{0.00000,0.50000,0.25000}
\definecolor{mycolor6}{rgb}{0.84000,0.29000,1.00000}
\definecolor{mycolor7}{rgb}{0.44000,0.16500,0.50000}
\definecolor{mycolor1-time}{rgb}{0.00000,0.44700,0.74100}%
\definecolor{mycolor2-time}{rgb}{0.85000,0.32500,0.09800}%
\definecolor{mycolor3-time}{rgb}{0.49400,0.18400,0.55600}%
\definecolor{mycolor4-time}{rgb}{0.92900,0.69400,0.12500}%
\begin{document}

\title{Component-by-component digit-by-digit construction of good polynomial lattice rules in weighted Walsh spaces}

\author{Adrian Ebert\thanks{A.~Ebert, P.~Kritzer, and O.~Osisiogu are supported by the Austrian Science Fund (FWF): Project F5506, 
which is part of the Special Research Program ``Quasi-Monte Carlo Methods: Theory and Applications''.}, Peter Kritzer, 
Onyekachi Osisiogu, Tetiana Stepaniuk\thanks{T.~Stepaniuk is supported by the Alexander von Humboldt Foundation.}}

\date{\today}

\maketitle

\begin{abstract}
	We consider the efficient construction of polynomial lattice rules, which are special cases of so-called quasi-Monte Carlo (QMC) rules.
	These are of particular interest for the approximate computation of multivariate integrals where the dimension $d$ may be in the hundreds or thousands. 
	We study a construction method that assembles the generating vector, which is in this case a vector of polynomials over a finite field, of the polynomial lattice 
	rule in a digit-by-digit (or, equivalently, coefficient-by-coefficient) fashion. As we will show, the integration error of the corresponding QMC rules achieves 
	excellent convergence order, and, under suitable conditions, we can vanquish the curse of dimensionality by considering function spaces equipped with coordinate weights. The construction algorithm is based on a quality measure that is independent of the underlying smoothness of the function space and can be implemented 
	in a fast manner (without the use of fast Fourier transformations). Furthermore, we illustrate our findings with extensive numerical results.
\end{abstract}

\noindent\textbf{Keywords:} Numerical integration; 
 polynomial lattice points; quasi-Monte Carlo methods; weighted function spaces; digit-by-digit construction; component-by-component construction; 
 fast implementations. 

 \noindent\textbf{2020 MSC:} 65D30, 65D32, 41A55, 41A63.

\section{Introduction} \label{sec:intro}

In this article we study the problem of multivariate numerical integration for a subclass of square-integrable functions $f \in L^2([0,1]^d)$. We consider special 
instances of so-called quasi-Monte Carlo (QMC) rules, which are methods to approximate integrals 
$$
I_d(f) =\int_{[0,1]^d } f(\bsx) \rd \bsx
$$
by equal-weight quadrature rules, 
$$
Q_{N,d}(f)=\frac{1}{N} \sum_{n=0}^{N-1} f(\bsx_n),
$$
where the integration nodes $\bsx_0,\bsx_1,\ldots,\bsx_{N-1}$ are deterministically chosen in $[0,1]^d$. This is in contrast 
to Monte Carlo rules, where the integration nodes are chosen randomly; with QMC rules, we try to make a deliberate and 
sophisticated choice of the points $\bsx_n$ with the aim of obtaining better error bounds than for Monte Carlo. 
The crucial challenge is to find integration nodes yielding a low approximation error simultaneously for a large class 
of functions that may depend on many variables. This means that, usually, one needs to be able to find millions 
of good integration nodes in very high dimensions which is a considerable computational challenge. 

In the literature on QMC methods, there are two main concepts that are commonly made use of when trying to find sets of integration nodes 
with good properties. These are, on the one hand, lattice point sets, as introduced independently by Korobov (see \cite{Kor59}) and Hlawka (see \cite{H62}). 
For more recent introductions to lattice rules, we refer to \cite{Nied92, SJ94}. The other class of commonly used QMC integration nodes 
is that of (digital) $(t,m,d)$-nets and $(t,d)$-sequences, as introduced by Niederreiter, building up on ideas by Sobol' and Faure (see \cite{N87,Nied92}).
A special case of $(t,m,d)$-nets, namely so-called polynomial lattice point sets, is the focus of the present paper. These point sets were introduced 
in \cite{N92a}, and have their name since their structure can be viewed as analogous to (ordinary) lattice point sets. 

While the construction principle of 
lattice point sets is based on integer arithmetic, polynomial lattice point sets are based on polynomial arithmetic over finite fields. To be more precise, 
we will fix a prime $b$, and consider the finite field $\F_b$ with $b$ elements. A polynomial lattice point set with $b^m$ points in $[0,1]^d$ is constructed by 
means of a modulus $p\in \F_b [x]$ with $\deg (p)=m$, and a generating vector $\bsg\in (\F_b [x])^d$ (we refer to Section \ref{sec:plr} for the precise definition). 
The QMC rule using the polynomial lattice point set as integration nodes is then called a polynomial lattice rule. It will be convenient in this paper to 
assume that the modulus has the form $p_m (x)=x^m$. However, it is crucial to note that not every choice of the generating vector $\bsg$ yields 
a polynomial lattice point set that has good properties, in the sense that the integration error of the corresponding polynomial lattice rule 
is sufficiently low. On the contrary, it is usually highly non-trivial to find good generating vectors of polynomial lattice rules, and there 
are (except for special cases) no explicit constructions of such good generating vectors known. Hence, one has to resort to computer search algorithms
for finding generating vectors of polynomial lattice point sets of high quality. Regarding the error measure, we consider in this paper 
the worst-case setting, i.e., we consider a particular normed function space and the supremum of the integration error over the unit ball 
of the space. 

It is known that (ordinary) lattice rules are well suited for the numerical integration of functions with pointwise convergent 
Fourier series (see again, e.g., \cite{Nied92} or \cite{SJ94}). On the other hand, polynomial lattice rules are usually applied for the 
numerical integration of functions that can be represented by Walsh series (cf. \cite{DKPS05,DP05,DP10}). We will therefore define a reproducing 
kernel Hilbert space based on Walsh functions in Section \ref{sec:walsh_space}, which will be considered throughout the paper. 
The function space under consideration will be characterized by a smoothness parameter $\alpha$ (in some publications this parameter is also 
referred to as ``digital smoothness parameter'' in the context of Walsh series). Indeed, the parameter $\alpha$ is linked to the speed 
of decay of the Walsh coefficients of the functions in our space, but there is also a connection to the number of derivatives that exist 
for the elements of the space (we refer to \cite{DP10} and the references therein for details). 

The function space considered here is closely related to other function spaces considered in the literature, such as in \cite{DKPS05, DP05, DP10}; indeed, results that 
we show for the space considered in the present paper immediately imply corresponding results for some of the Walsh spaces 
considered in these references. Furthermore, our Hilbert space will be a ``weighted'' function space in the sense of Sloan and Wo\'{z}niakowski (cf. \cite{SW98}). 
This means that we assign non-negative real numbers (weights) to the coordinates, or groups of coordinates, of the integration problem, in order to model the different 
influence of the coordinates on the problem. As pointed out in \cite{SW98} and numerous other papers, this method is justified by 
practical high-dimensional problems in which different coordinates may indeed have a very different degree of influence on the value of an integral. 
The weights will be incorporated in the inner product and norm of the function space in a suitable way. Using this setting, it is plausible that a nominally very 
high-dimensional problem may have a rather low ``effective dimension'', i.e., only a certain, possibly small, part of the components has a significant influence on the integration problem and the error made by approximative algorithms. This may then yield situations where a curse of dimensionality can be avoided. 

In the present paper, we will restrict ourselves, for technical reasons, to considering the most common choice of weights, so-called 
product weights, but we suspect that the construction of QMC rules presented here could also work for other choices of weights. We 
refer to Section \ref{sec:algorithm} for further comments on this question.

The first efficient construction of good generating vectors of polynomial lattice point sets was done in \cite{DKPS05}. In that paper, 
the authors considered the so-called component-by-component (CBC) approach, which is a greedy algorithm to construct one component of the 
generating vector at a time. CBC algorithms were first considered for ordinary lattice point sets, with the first examples in the literature 
going back to Korobov (cf. \cite{Kor63}), and later a rediscovery by Sloan and Reztsov (cf. \cite{SR02}). The fast CBC construction, which 
is due to Cools and Nuyens (see, e.g., \cite{N14, NC06a, NC06b}), makes the CBC construction computationally competitive and is currently 
the standard method to construct high-dimensional lattice point sets of good quality.It is well known (see, e.g., \cite{DKPS05} and again \cite{N14}) 
that CBC constructions also work for the efficient search for generating vectors of polynomial lattice point sets; and also in this case, a fast algorithm is available. 

In the present paper, we present another, different algorithm to construct generating vectors of polynomial lattice point sets in an efficient 
way. This construction is also based on a component-by-component approach. However, as opposed to the CBC algorithms for polynomial lattice point 
sets currently available in the literature, our new approach constructs the single components of the generating vector $\bsg$ ``digit-by-digit'' 
and the used search criterion is independent of the smoothness parameter $\alpha$.
Actually, the term ``digit-by-digit'' is based on a similar approach that exists for ordinary lattice point sets (see \cite{Kor82, Kor82Eng}, 
and for similar results in a more up-to-date setting, \cite{EKNO2020}). In the context of polynomial lattice point sets, the generating 
vector $\bsg$ consists of polynomials, so it would  be more appropriate to speak of a ``coefficient-by-coefficient'' instead of a 
``digit-by-digit'' construction. However, to stay consistent regarding the name of the method, and to avoid confusion with the ``component-by-component''
approach, we keep the name ``digit-by-digit'' construction also for polynomial lattice rules. In fact, the algorithm which we will present in Section 
\ref{sec:algorithm} contains two loops. An outer loop in which the different components are constructed, and an inner loop in which 
the coefficients (digits) of each component of the generating vector are constructed. Both loops can be regarded as greedy, i.e., choices that 
have been made in previous steps are kept fixed. 

We will show that the polynomial lattice rules obtained by our new construction method satisfy upper error bounds that are arbitrarily close 
to the optimal convergence rate. Furthermore, under suitable conditions on the coordinate weights, we can vanquish the curse of dimensionality, 
i.e., avoid exponential dependence of the error on the dimension $d$ of the integration problem, or even obtain error bounds that are independent 
of the dimension. 

The rest of the paper is structured as follows. In Section \ref{sec:walsh-polylat}, we introduce the function space setting as well as polynomial lattice rules, and analyze 
the corresponding worst-case error expression. In Section \ref{sec:cbc-dbd}, we derive the component-by-component digit-by-digit (or, for short, CBC-DBD)
construction algorithm for polynomial lattice rules and study the worst-case error behavior of the resulting integration rules. 
In Section \ref{sec:fast_impl}, we show that the introduced construction method can be implemented in a fast manner, competitive with state-of-the-art construction algorithms. Finally, the article is concluded in Section \ref{sec:num}, where we illustrate our main results by numerical experiments.

To conclude this introductory section, we fix some notation. In what follows, we denote the set of positive integers by $\N$ and the set of 
non-negative integers by $\N_0$. To denote subsets of components, we use fraktur font, e.g., $\setu \subset \N$ and 
additionally write shorthand $1 \mcol d\} := \{1,\ldots,d\}$. For the projection of a vector $\bsx \in [0,1]^d$ or $\bsk \in \N^d$ 
onto the components in a set $\setu \subseteq \{1 \mcol d\}$ we write $\bsx_\setu = (x_j)_{j \in \setu}$ or $\bsk_\setu = (k_j)_{j \in \setu}$, respectively. 
With a slight abuse of notation, we will frequently identify elements of the finite field $\F_b$ of prime cardinality $b$ with elements of the group of integers modulo $b$ denoted by $\Z_b$.

\section{Polynomial lattice rules in weighted Walsh spaces} \label{sec:walsh-polylat}

In this article we consider numerical integration of a sub-class of the square-integrable functions 
$f \in L^2([0,1]^d)$ which can be represented in terms of their Walsh series. 
This particular series representation of a function is based on the so-called Walsh functions, which are defined as follows.

\begin{definition} \label{def:walsh_functions}
	Let $b \ge 2$ be an integer. For a non-negative integer $k$, we define the $k$-th Walsh function $_b\wal_k :[0,1) \to \C$ by
	\begin{equation*}
		_b\wal_k (x)
		:=
		\mathrm{e}^{2\pi\icomp (\kappa_0 \xi_1 + \kappa_1 \xi_2 + \cdots + \kappa_{a-1} \xi_{a})/b}
	\end{equation*}
	with $x\in [0,1)$ and base $b$ representations $k=\kappa_0 + \kappa_1 b + \cdots \kappa_{a-1} b^{a-1}$ and 
	$x=\xi_1 b^{-1} + \xi_2 b^{-2} + \cdots$ (unique in the sense that infinitely many of the $\xi_i$ must be different from $b-1$) 
	with coefficients $\kappa_i, \xi_i \in \{0,1,\ldots,b-1\}$.

	For $d \in \N$, an integer vector $\bsk=(k_1,\ldots,k_d) \in \N_0^d$ and $\bsx=(x_1,\ldots,x_d)\in [0,1)^d$, we define 
	the $\bsk$-th ($d$-variate) Walsh function $_b\wal_{\bsk} :[0,1)^d \to \C$ by
	\begin{equation*}
		_b\wal_{\bsk} (\bsx)
		:=
		\prod_{j=1}^d \ _b\wal_{k_j} (x_j)
		.
	\end{equation*}
\end{definition}

In the following, we will consider the base $b \ge 2$ as fixed (for the sake of simplicity, we will assume that $b$ is prime), and then simply write $\wal_k$ or $\wal_{\bsk}$ 
instead of $_b\wal_k$ or $_b\wal_{\bsk}$, respectively. It is known (see, e.g., \cite{DP10}) that the Walsh functions 
in any fixed base $b$ form an orthonormal basis of $L^2 ([0,1]^d)$. 

As indicated, we consider a class of square-integrable functions that can be represented in terms of their Walsh series, that is,
\begin{equation} \label{eq:Walsh_series}
	f(\bsx)
	=
	\sum_{\bsk \in \N_0^d} \hat{f}(\bsk) \, \wal_\bsk(\bsx)
	\quad \text{with} \quad
	\hat{f}(\bsk)
	:=
	\int_{[0,1]^d} f(\bsx) \, \overline{\wal_\bsk(\bsx)} \rd \bsx
	,
\end{equation}
where we call $\hat{f}(\bsk)$ the $\bsk$-th Walsh coefficient of $f$.

It is known from the literature on QMC methods in the past decades that it is advantageous to choose the integration nodes 
of a QMC rule such that there exists an efficient way of expressing the integration error for elements in the function class under consideration. 
In the case where the integrand $f$ can be represented in terms of Walsh series as in \eqref{eq:Walsh_series}, it is common to consider quasi-Monte Carlo 
rules which are based on so-called digital nets and sequences. Digital $(t,m,d)$-nets are point sets consisting of $b^m$ elements in $[0,1]^d$ that satisfy certain regular distribution properties, and were in their most general form introduced in \cite{N92a} (see also \cite{Nied92}). These point sets are generated by using $d$ generating matrices $C_1,\ldots,C_d$ over a finite field or ring. In particular, for a digital $(t,m,d)$-net $P=\{\bsx_0,\ldots,\bsx_{b^m-1}\} \subset [0,1]^d$ constructed over $\Z_b=\{0,1,\ldots,b-1\}$ with generating matrices $C_1,\ldots,C_d \in \Z_b^{m \times m}$ the integration error of a QMC rule based on $P$ takes a special form. It is commonly known, see, e.g., \cite[Theorem 6.4]{DKS13}, that approximating the integral $I_d(f)$ of a $d$-variate function $f$ using a QMC rule $Q_{b^m,d}(f;P)$, that is,
\begin{equation*}
	Q_{b^m,d}(f)=Q_{b^m,d}(f;P)
	:=
	\frac{1}{b^m} \sum_{n=0}^{b^m-1} f(\bsx_n)
	\approx
	\int_{[0,1]^d} f(\bsx) \rd \bsx 
	=:
	I_d(f)
	,
\end{equation*}
leads to an integration error of the form
\begin{equation} \label{eq:int_error}
	Q_{b^m,d}(f;P) - I_d(f)
	=
	\sum_{\bszero \ne \bsk \in \calD} \hat{f}(\bsk)
\end{equation}
with the dual net $\calD = \calD(C_1,\ldots,C_d) := \{\bsk \in \N_0^d \mid C_1^\top \widetilde{\tr}_m(\vec{k}_1) + \dots + C_d^\top \widetilde{\tr}_m(\vec{k}_d) = \overline{\bszero} \}$, where for $k \in \N_0$ with base $b$ expansion $k = \kappa_0 + \kappa_1 b + \dots + \kappa_a b^a$ 
we define the vector $\widetilde{\tr}_m(\vec{k})=(\kappa_0,\kappa_1, \ldots, \kappa_{m-1}) \in \Z_b^m$, and where we denote by $\overline{\bszero}$ the zero vector in $\Z_b^m$. Equation \eqref{eq:int_error} is a consequence of the following character property of Walsh functions,
\begin{equation*} 
	\frac{1}{b^m}\sum\limits_{n=0}^{b^m-1} \mathrm{wal}_{\bsk} (\bsx_n)
	=
	\left\{\begin{array}{cc}
		1, & {\text{if }}  C_1^\top \widetilde{\tr}_m(\vec{k}_1) + \dots + C_d^\top \widetilde{\tr}_m(\vec{k}_{d}) = \overline{\bszero} , \\ 
		0, & {\text{otherwise.}}
	\end{array}\right.
\end{equation*}
We will also use this property in the subsequent analysis.

\subsection{The weighted Walsh space}\label{sec:walsh_space}

Based on the decay of the Walsh coefficients $\hat{f}(\bsk)$ in \eqref{eq:Walsh_series} we will define a function space for the integrands considered in this paper. 
As mentioned in the introduction, this space will be equipped with weights to model the varying influence of the coordinates. 
To this end, let $\bsgamma =(\gamma_j)_{j\ge 1}$ be a non-increasing sequence of positive real numbers. 
The weights $\gamma_j$ will appear in the definition of the inner product and norm of the function space 
defined below. Intuitively, we can think of the weight $\gamma_j$ describing the degree of influence of the $j$-th variable on the integration problem. 
Hence, we assume (w.l.o.g.) that the coordinates are ordered according to their influence. It will also be convenient to define 
\begin{equation*}
	\gamma_\setu
	:=
	\prod_{j\in\setu} \gamma_j
\end{equation*}
for a subset $\setu \subseteq \{1 \mcol d\}$, and to additionally set $\gamma_{\emptyset}$ to equal $1$. The weights $\gamma_\setu$ 
are (for obvious reasons) called product weights. In the recent literature on QMC rules, also other types of weights have been considered, but we will restrict ourselves to product weights here. We refer to \cite{DKS13} for further information on this subject. 

For prime base $b \ge 2$ and given smoothness parameter $\alpha > 1$, we set $\psi_b(k):=\floor{\log_b(k)}$ for $k \in \N$ and define the decay function $r_\alpha: \N_0 \to \R$ by
\begin{equation*}
	r_\alpha (k)=r_\alpha (b,k)
	:=
	\left\{\begin{array}{cc}
	1, & {\text{if }} k=0 , \\ 
	b^{\alpha \psi_b (k)}, & {\text{if }} k\ne0 ,
	\end{array}\right.
\end{equation*}
with $k\in\N_0$. It is also convenient to define the quantity
\begin{equation*}
	\mu_b (\alpha)
	:=
	\sum_{k=1}^\infty (r_{\alpha} (k))^{-1}= \sum_{a=0}^\infty \frac{1}{b^{a\alpha}} \sum_{k=b^a}^{b^{a+1}-1} 1=\sum_{a=0}^\infty \frac{(b-1)b^a}{b^{a\alpha}}
	=
	\frac{b^{\alpha}(b-1)}{b^\alpha - b}
	.
\end{equation*} 

For the multivariate case with dimension $d\in\N$, integer vector $\bsk=(k_1,\ldots,k_d) \in \N_0^d$, and a sequence of weights 
$\bsgamma=(\gamma_j)_{j\ge 1}$, we define the weighted decay functions
\begin{equation*}
	r_\alpha (\bsk)
	:=
	\prod_{j=1}^d r_\alpha (k_j) 
	\quad \text{and} \quad
	r_{\alpha,\bsgamma}(\bsk)
	:=
	\gamma_{\supp(\bsk)}^{-1} \, r_\alpha (\bsk) 
	=
	\gamma_{\supp(\bsk)}^{-1} \prod_{j \in \supp(\bsk)} b^{\alpha \psi_b(k_j)}
\end{equation*}
with $\supp(\bsk) := \{ j \in \{1 \mcol d\} \mid k_j \ne 0 \}$.

Using this decay function, we can  estimate the integration error obtained in \eqref{eq:int_error} by
\begin{align} \label{eq:Hoelder}
	\abs{Q_{b^m,d}(f;P) - I_d(f)}
	&=
	\abs{\sum_{\bszero \ne \bsk \in \calD} \hat{f}(\bsk)}
	=
	\abs{\sum_{\bszero \ne \bsk \in \N_0^d} \hat{f}(\bsk) \, r_{\alpha,\bsgamma}(\bsk) \, (r_{\alpha,\bsgamma}(\bsk))^{-1} \, \bsone_{\calD}(\bsk)} \nonumber \\
	&\le
	\left( \sup_{\bsk \in \N_0^d} |\hat{f}(\bsk)| \, r_{\alpha,\bsgamma}(\bsk) \right) \left( \sum_{\bszero \ne \bsk \in \calD} (r_{\alpha,\bsgamma}(\bsk))^{-1} \right)
\end{align}
with $\bsone_{\calD}$ denoting the indicator function of the dual lattice $\calD$. Based on this estimate, we define, for real
$\alpha>1$ and a sequence of strictly positive weights $\bsgamma = (\gamma_j)_{j\ge 1}$, the weighted Walsh space as
\begin{equation*}
	W_{d,\bsgamma}^{\alpha}
	:=
	\{f \in L^2([0,1]^d) \mid \norm{f}_{W_{d,\bsgamma}^{\alpha}} < \infty \}
\end{equation*}
with corresponding norm $\norm{\cdot}_{W_{d,\bsgamma}^{\alpha}}$ given by
\begin{equation} \label{eq:norm}
	\norm{f}_{W_{d,\bsgamma}^{\alpha}}
	:=
	\sup_{\bsk \in \N_0^d} |\hat{f}(\bsk)| \, r_{\alpha,\bsgamma}(\bsk)
	.
\end{equation}

\begin{remark}
	 We remark that the definition of the norm implies that functions in $W_{d,\bsgamma}^{\alpha}$ have an absolutely 
	 convergent Walsh series which converges pointwise (see, e.g., \cite{DP10}). 
\end{remark}

\begin{remark}
	 We would like to note here that in many recent papers (e.g., \cite{DKPS05,DP05}), a slightly different function space 
	 $\widetilde{W}_{d,\bsgamma}^{\alpha}$
	 based on Walsh functions has been studied. In $\widetilde{W}_{d,\bsgamma}^{\alpha}$ the norm is not given as an $\infty$-norm 
	 as in \eqref{eq:norm}, but in the $L_2$-sense, i.e.,
	 \begin{equation*}
		\norm{f}_{\widetilde{W}_{d,\bsgamma}^{\alpha}}
		:=
		\sum_{\bsk \in \N_0^d} |\hat{f}(\bsk)|^2 \, r_{\alpha,\bsgamma}(\bsk)
		.
	\end{equation*}
	This definition of the norm corresponds to alternatively applying H\"{o}lder's inequality with $p=q=2$ in the bound on the integration error that led to \eqref{eq:Hoelder}.
	As we will see below, the worst-case error expressions for $W_{d,\bsgamma}^{\alpha}$ and $\widetilde{W}_{d,\bsgamma}^{\alpha}$ are closely related to each other. 
\end{remark}

In order to assess the quality of the QMC methods constructed later on, we will use the worst-case error in the weighted Walsh space as the error criterion. Indeed, the worst-case error for the QMC rule $Q_{b^m,d}(\cdot;P)$ in the space $W_{d,\bsgamma}^{\alpha}$ is defined as
\begin{equation*}
	e_{b^m,d,\alpha,\bsgamma}(P)
	:=
	\sup_{\substack{f \in W_{d,\bsgamma}^{\alpha} \\ \|f\|_{W_{d,\bsgamma}^{\alpha}} \le 1}} | I_d(f) - Q_{b^m,d}(f;P)|
	.
\end{equation*}
A useful formula for the worst-case error for $(t,m,d)$-nets in the function space $W_{d,\bsgamma}^{\alpha}$ is given in the following theorem. 

\begin{theorem}\label{thm:wce_dig_net}
	Let $m,d \in \N$, $\alpha > 1$, $b \ge 2$, and a sequence of positive weights $\bsgamma = (\gamma_j)_{j\ge 1}$ be given. 
	Then the worst-case error $e_{b^m,d,\alpha,\bsgamma}(P)$ of the QMC rule $Q_{b^m,d}(\cdot;P)$ based on the digital 
	$(t,m,d)$-net $P=\{\bsx_0,\ldots,\bsx_{b^m-1}\}$ with generating matrices $C_1,\ldots,C_d$ in the space $W_{d,\bsgamma}^{\alpha}$ satisfies
	\begin{equation}\label{eq:wce-infty}
		e_{b^m,d,\alpha,\bsgamma}(P)
		=
		\sum_{\bszero \ne \bsk \in \calD} (r_{\alpha,\bsgamma}(\bsk))^{-1}
		.
	\end{equation}
\end{theorem}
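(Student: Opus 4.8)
The plan is to establish the claimed identity by proving the two matching inequalities ``$\le$'' and ``$\ge$'' for the supremum defining $e_{b^m,d,\alpha,\bsgamma}(P)$. Throughout I write $S := \sum_{\bszero \ne \bsk \in \calD}(r_{\alpha,\bsgamma}(\bsk))^{-1}$ for the right-hand side of \eqref{eq:wce-infty}. A preliminary observation is that $S$ is finite: since $\calD \subseteq \N_0^d$ and all terms are nonnegative, $S \le \sum_{\bsk \in \N_0^d}(r_{\alpha,\bsgamma}(\bsk))^{-1} = \prod_{j=1}^d(1 + \gamma_j\,\mu_b(\alpha))$, which converges because $\alpha > 1$ guarantees $\mu_b(\alpha) < \infty$ and $d$ is finite. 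This finiteness is what makes the supremum finite and the asserted identity meaningful.

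For the upper bound I would simply invoke the chain of estimates already recorded in \eqref{eq:int_error}--\eqref{eq:Hoelder}. For any $f \in W_{d,\bsgamma}^{\alpha}$, the error representation gives $\abs{I_d(f) - Q_{b^m,d}(f;P)} = \abs{\sum_{\bszero \ne \bsk \in \calD}\hat{f}(\bsk)}$, and splitting $\hat f(\bsk) = [\hat f(\bsk)\, r_{\alpha,\bsgamma}(\bsk)]\,(r_{\alpha,\bsgamma}(\bsk))^{-1}$ and applying H\"older's inequality bounds this by $\norm{f}_{W_{d,\bsgamma}^{\alpha}}\, S$. Restricting to the unit ball and taking the supremum yields $e_{b^m,d,\alpha,\bsgamma}(P) \le S$.

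For the reverse inequality I would exhibit an exactly extremal competitor. Define $f^*$ through its Walsh series by setting $\hat{f^*}(\bsk) := (r_{\alpha,\bsgamma}(\bsk))^{-1}$ for $\bszero \ne \bsk \in \calD$ and $\hat{f^*}(\bsk) := 0$ otherwise, that is, $f^* := \sum_{\bszero\ne\bsk\in\calD}(r_{\alpha,\bsgamma}(\bsk))^{-1}\,\wal_{\bsk}$. By construction the quantity $\abs{\hat{f^*}(\bsk)}\,r_{\alpha,\bsgamma}(\bsk)$ equals $1$ on $\calD\setminus\{\bszero\}$ and $0$ elsewhere, so $\norm{f^*}_{W_{d,\bsgamma}^{\alpha}} = 1$ (assuming $\calD \ne \{\bszero\}$; otherwise both sides of the identity vanish and there is nothing to prove). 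Feeding $f^*$ into the error representation gives $\abs{I_d(f^*) - Q_{b^m,d}(f^*;P)} = \sum_{\bszero\ne\bsk\in\calD}(r_{\alpha,\bsgamma}(\bsk))^{-1} = S$, because here all summands are real and positive and thus add without cancellation. Hence $e_{b^m,d,\alpha,\bsgamma}(P) \ge S$, and combined with the upper bound this proves \eqref{eq:wce-infty}.

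The one step needing genuine care, and the main obstacle, is verifying that $f^*$ is an admissible competitor, that is, a well-defined element of $L^2([0,1]^d)$ whose Walsh coefficients are exactly the prescribed values. Since the Walsh functions form an orthonormal basis of $L^2([0,1]^d)$, this reduces to checking square-summability of the coefficient sequence, $\sum_{\bszero\ne\bsk\in\calD}(r_{\alpha,\bsgamma}(\bsk))^{-2} < \infty$, which again follows from $\alpha > 1$: the full sum over $\N_0^d$ factorizes as $\prod_{j=1}^d(1 + \gamma_j\,\mu_b(2\alpha))$ with $\mu_b(2\alpha) < \infty$. Once convergence in $L^2$ is secured, orthonormality identifies the Walsh coefficients of $f^*$ with the chosen values, which retroactively justifies both the norm computation and the evaluation of the integration error above, completing the argument.
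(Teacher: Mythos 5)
Your proof is correct and follows essentially the same route as the paper: the upper bound via H\"older's inequality as in \eqref{eq:Hoelder}, and the lower bound by exhibiting a norm-one function whose Walsh coefficients equal $(r_{\alpha,\bsgamma}(\bsk))^{-1}$ so that the bound is attained. The only immaterial difference is that the paper's extremal function $f_0$ carries these coefficients for \emph{all} $\bsk \in \N_0^d$ rather than only on the dual net $\calD$, while your extra verifications (finiteness of the sum, $L^2$ well-definedness of the competitor) are details the paper leaves implicit.
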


\begin{proof}
	Recalling the definition of the worst-case error of the QMC rule $Q_{b^m,d}(\cdot;P)$,
	the combination of \eqref{eq:Hoelder} and the definition of $\|\cdot\|_{W_{d,\bsgamma}^{\alpha}}$ leads to the estimate
	\begin{equation*}
		e_{b^m,d,\alpha,\bsgamma}(P)
		\le
		\sup_{\substack{f \in W_{d,\bsgamma}^{\alpha} \\ \|f\|_{W_{d,\bsgamma}^{\alpha}} \le 1}} \|f\|_{W_{d,\bsgamma}^{\alpha}} 
		\sum_{\bszero \ne \bsk \in \calD} (r_{\alpha,\bsgamma}(\bsk))^{-1}
		\le
		\sum_{\bszero \ne \bsk \in \calD} (r_{\alpha,\bsgamma}(\bsk))^{-1} 
		.
	\end{equation*}
	Observing that the function $f_0$ with Walsh coefficients $\hat{f_0}(\bsk) =(r_{\alpha,\bsgamma}(\bsk))^{-1}$ has norm $\|f_0\|_{W_{d,\bsgamma}^{\alpha}} = 1$ and that its integration error equals
	\begin{equation*}
		Q_{b^m,d}(f_0,P) - I_d(f_0) 
		=
		\sum_{\bszero \ne \bsk \in \calD} (r_{\alpha,\bsgamma}(\bsk))^{-1} ,
	\end{equation*}
	we obtain that the previous upper bound is attained such that the claimed identity follows. 
\end{proof}

\begin{remark}
	 Returning to the alternative Walsh space $\widetilde{W}_{d,\bsgamma}^{\alpha}$ once again, 
	 it is known from \cite{DP05} that the worst-case error in this space equals
	 \begin{equation*}
	    \left(\sum_{\bszero \ne \bsk \in \calD} (r_{\alpha,\bsgamma}(\bsk))^{-1}\right)^{1/2}
	    ,
	 \end{equation*}
	 which is just the square root of the worst-case error in $W_{d,\bsgamma}^{\alpha}$, as outlined in Theorem \ref{thm:wce_dig_net}. 
	 Therefore, we see that the worst-case errors in these Walsh spaces are intimately related to each other, and all results 
	 shown here for $W_{d,\bsgamma}^{\alpha}$ immediately yield corresponding results for $\widetilde{W}_{d,\bsgamma}^{\alpha}$.
\end{remark}

\subsection{Polynomial lattice rules}\label{sec:plr}

While Theorem \ref{thm:wce_dig_net} is a very useful result, the question of how to find and construct $(t,m,d)$-nets 
with a low integration error for practical purposes remains. One of the most powerful ways of obtaining nets is to consider a special case, 
namely so-called polynomial lattice point sets, as introduced by Niederreiter in \cite{N92a}. The name ``polynomial lattice point sets'' is 
due to the fact that the structure of polynomial lattice point sets is similar to that of ordinary lattice point sets as introduced by Korobov 
\cite{Kor59} and Hlawka \cite{H62}. However, while lattice point sets are based on integer arithmetic, polynomial lattice point sets are 
obtained by using polynomial arithmetic over finite fields. We also point out that there are nowadays variants of polynomial lattice point 
sets which are especially suited for integrating functions with higher smoothness (see, e.g., \cite{DP10}). However, we will not consider 
higher order polynomial lattices here, but restrict ourselves to the more classical construction scheme. 
We point out that polynomial lattice point sets are actually a special case of so-called digital $(t,m,d)$-nets, which 
can be constructed using generating matrices $C_1,\ldots,C_d$ over a finite field. 
For our purposes, though, it is more convenient to define these point sets in an alternative way. Before we give the precise definition, we need to introduce some notation. 

Let $\F_b ((x^{-1}))$ be the field of formal Laurent series over $\F_b$ with elements of the form
\begin{align*}
	L
	=
	\sum_{\ell=w}^\infty t_{\ell} x^{-\ell}
	,
\end{align*}
where $w$ is an arbitrary integer and all $t_{\ell}\in\F_b$. We further denote by $\F_b [x]$ the set of all polynomials over $\F_b$ 
and define the map $v_m: \F_b ((x^{-1}))\to [0,1)$ by
\begin{align*}
	v_m \left(\sum_{\ell =w}^\infty t_\ell \, x^{-\ell}\right)=\sum_{\ell=\max (1,w)}^m t_{\ell} \, b^{-\ell}
	.
\end{align*}
There is a close connection between the base $b$ expansions of natural numbers and the polynomial ring $\F_b [x]$. For $n\in \N_0$ with base $b$ expansion $n=n_0 + n_1 b + \cdots + n_a b^{a}$, we associate $n$ with the polynomial
\begin{align*}
	n(x)
	:=
	\sum_{k=0}^a n_k \, x^k \in \F_b [x].
\end{align*} 
The definition of a polynomial lattice point set is then given as follows. We note that here 
and in the following we consider the zero polynomial to have degree $-\infty$, hence the case $n=0$ is included 
in the following definition.

\begin{definition}[Polynomial lattice] \label{def:poly_lat}
	Let $b$ be prime and let $m,d \in \N$ be given. Furthermore, choose $p\in \F_b [x]$ with $\deg (p)=m$, and let 
	$g_1,\ldots,g_d \in \F_b [x]$. Then the point set $P(\bsg,p)$, defined as the collection of the $b^m$ points 
	\begin{equation*}
		\bsx_n
		:=
		\left(v_m \left( \frac{n(x)\, g_1(x)}{p(x)} \right),\ldots, v_m\left( \frac{n(x)\, g_d(x)}{p(x)} \right)\right) \in [0,1)^d
	\end{equation*}
	for $n \in \F_b[x]$ with $\deg(n)<m$, is called a polynomial lattice point set (we sometimes also refer to the point set 
	as polynomial lattice for short), where the vector $\bsg=(g_1,\ldots,g_d) \in (\F_b[x])^d$ is called the generating vector.
\end{definition}

As pointed out above, due to the construction principle and the similarities to the construction 
of (rank-1) lattices, $P(\bsg,p)$ is often called a (rank-1) polynomial lattice and a QMC rule using the point
set $P(\bsg,p)$ is referred to as a polynomial lattice rule (modulo $p$). 
Furthermore, note that one can restrict the choice of the components $g_j$ of $\bsg$ to the sets
\begin{align*}
	G_{b,m}
	:=
	\{ g\in\F_b [x] \mid \deg(g) < m \}
	\quad \text{or} \quad
	G^\ast_{b,m}
	:=
	\{ g\in\F_b [x] \setminus \{0\} \mid \deg(g) < m \}
	.
\end{align*}
We also add that it is known from the literature on polynomial lattice point sets that it is desirable to have $\gcd(g_j,p)=1$ for the components $g_j$ 
of $\bsg$, as this guarantees certain regularity properties. For prime $b$, the generating matrices $C_1,\ldots,C_d \in \F_b^{m \times m}$ of a polynomial lattice point set $P(\bsg,p)$ can be obtained from the generating vector $\bsg$ and $p$, cf. \cite[Theorem 10.5]{DP10}. It then follows that the dual net $\calD(\bsg,p)$ of a polynomial lattice with generating vector $\bsg$, modulus $p$ with $\deg(p)=m$, and generating matrices $C_1,\ldots,C_d$ equals (see, e.g., \cite[Lemma 4.40]{Nied92})
\begin{equation*}
	\calD(\bsg,p)
	=
	\{\bsk \in \N_0^d \mid C_1^\top 
	\widetilde{\tr}_m(\vec{k}_1) + \dots + C_d^\top \widetilde{\tr}_m(\vec{k}_d) = \overline{\bszero} \}
	=
	\{\bsk \in \N_0^d \mid \tr_m(\bsk) \cdot \bsg \equiv 0 \tpmod{p} \} 
	,
\end{equation*} 
where for two vectors $\bsu,\bsv \in (\FF_b[x])^d$ we define the vector dot product $\bsu \cdot \bsv = \sum_{j=1}^{d} u_j v_j$. 
Furthermore, for $k \in \N_0$ with $b$-adic expansion $k=\kappa_0 + \kappa_1 b + \cdots + \kappa_{a-1} b^{a-1}$, 
we define the truncation map $\tr_m: \N_0 \to G_{b,m}$ via
\begin{equation*}
	\tr_m(k)
	:=
	\kappa_0 + \kappa_1 x + \cdots +\kappa_{m-1} x^{m-1},
\end{equation*}
where we consider $\kappa_j$ as 0 if $j\ge a$. If we apply $\tr_m$ to a $d$-dimensional vector, we define its $d$-variate generalization $\tr_m(\bsk)$ to be applied 
componentwise. Furthermore, for a subset $\setu \subseteq \{1 \mcol d\}$ we introduce the notation
\begin{equation*}
	\calD_\setu
	=
	\calD_\setu(\bsg, p)
	=
	\calD_\setu(\bsg_\setu)
	:=
	\{ \bsk_\setu  \in \N^{|\setu|} \mid \tr_m(\bsk_\setu) \cdot \bsg_\setu \equiv 0 \tpmod{p} \} 
	.
\end{equation*}
Due to the obtained equivalence for the dual net of a polynomial lattice, the result in Theorem \ref{thm:wce_dig_net} 
also applies to polynomial lattice rules with $\calD(C_1,\ldots,C_d)$ replaced by $\calD (\bsg,p)$. Furthermore, we will henceforth denote the 
worst-case error of a QMC rule based on the polynomial lattice point set $P(\bsg,p)$ in the space $W_{d,\bsgamma}^\alpha$ by $e_{b^m,d,\alpha,\bsgamma}(\bsg)$.  

\subsection{The quality measure}

In this section we introduce an alternative quality measure which, opposed to the worst-case error expression $e_{b^m,d,\alpha,\bsgamma}$ in 
\eqref{eq:wce-infty} is independent of the parameter $\alpha$.

For $\alpha \ge 1$, given weight sequence $\bsgamma=(\gamma_j)_{j \ge 1}$, $m \in \N$, modulus $p \in \F_b[x]$ with $\deg(p)=m$, 
and $\bsg \in (\F_b[x])^d$, we define the quantities
\begin{equation} \label{eq:qual_meas}
	T_{\bsgamma}(\bsg,p)
	:= 
	\sum_{\bszero \ne \bsk \in A_p(\bsg)} (r_{1,\bsgamma}(\bsk))^{-1} 
	, \qquad
	T_{\alpha,\bsgamma}(\bsg,p)
	:= 
	\sum_{\bszero \ne \bsk \in A_p(\bsg)} (r_{\alpha,\bsgamma}(\bsk))^{-1}
\end{equation}      
with index set given by
\begin{equation*}
	A_p(\bsg)
	:=
	\{ \bsk \in \{0,1,\ldots,b^m-1\}^d \mid \bsk \in \calD (\bsg, p) \}
	.
\end{equation*}
Furthermore, for a subset $\emptyset\neq\setu \subseteq \{1 \mcol d\}$, we introduce the sets
\begin{align*}
	A_\setu 
	&=
	A_{p,\setu}(\bsg_\setu)
	=
	A_{p,\setu}(\bsg)
	:=
	\{ \bsk_\setu \in \{0,1,\ldots,b^m-1\}^{\abs{\setu}} \mid \bsk_\setu \in \calD_{\setu}(\bsg,p) \}
	, \\
	A^\ast_\setu 
	&=
	A^\ast_{p,\setu}(\bsg_\setu)
	=
	A^\ast_{p,\setu}(\bsg)
	:=
	\{ \bsk_\setu \in \{1,\ldots,b^m-1\}^{\abs{\setu}} \mid \bsk_\setu \in \calD_{\setu}(\bsg,p) \},
\end{align*}
and for a polynomial $p \in \F_b[x]$ define the indicator function $\delta_p: \F_b[x] \to \{0,1\}$ by
\begin{equation*}
	\delta_p(q)
	:=
	\begin{cases}
		1, & \text{if } q \equiv 0 \tpmod{p}, \\
		0, & \text{if } q \not\equiv 0 \tpmod{p} .
	\end{cases}
\end{equation*}

In the following proposition we estimate the difference between the worst-case error $e_{b^m,d,\alpha,\bsgamma}(\bsg)$
and the truncated quality measure $T_{\alpha,\bsgamma}(\bsg,p)$ of a polynomial lattice rule with generator $\bsg$ and modulus $p \in \F_b[x]$ with $\deg(p)=m$.

\begin{proposition} \label{prop:trunc_error}
	Let $\bsgamma = (\gamma_j)_{j\ge 1}$ be a sequence of positive weights, let $p \in \F_b[x]$ with $\deg(p)=m$, and let $\bsg = (g_1,\ldots,g_d) \in G_{b,m}^d$ such that $\gcd(g_j,p)=1$ for all $j=1,\ldots,d$. Then, for any $\alpha>1$ and $N=b^m$, we have
	\begin{align*}
		e_{b^m,d,\alpha,\bsgamma}(\bsg) - T_{\alpha,\bsgamma}(\bsg,p)
		\le
		\frac{1}{N^{\alpha}} \sum_{\emptyset\neq \setu \subseteq \{1:d\}} \gamma_{\setu} (2 \mu_b(\alpha))^{\abs{\setu}}
		.
	\end{align*}
\end{proposition}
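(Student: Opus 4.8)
The plan is to write the left-hand side as an exact tail sum and then bound it. By \RefThm{thm:wce_dig_net} (applied to polynomial lattices, where $\calD$ is replaced by $\calD(\bsg,p)$) we have $e_{b^m,d,\alpha,\bsgamma}(\bsg)=\sum_{\bszero\ne\bsk\in\calD(\bsg,p)}(r_{\alpha,\bsgamma}(\bsk))^{-1}$, and since $A_p(\bsg)$ collects exactly those dual-net points all of whose coordinates lie in $\{0,\dots,b^m-1\}$, the difference $e_{b^m,d,\alpha,\bsgamma}(\bsg)-T_{\alpha,\bsgamma}(\bsg,p)$ equals the sum of $(r_{\alpha,\bsgamma}(\bsk))^{-1}$ over those $\bszero\ne\bsk\in\calD(\bsg,p)$ having at least one coordinate $k_j\ge b^m$. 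I would organize this tail by the support $\setu=\supp(\bsk)$, so that the summand factors as $\gamma_\setu\prod_{j\in\setu}b^{-\alpha\psi_b(k_j)}$, and split each coordinate as $k_j=l_j+b^m h_j$ with low part $l_j\in\{0,\dots,b^m-1\}$ and high part $h_j\in\N_0$. The decisive structural fact is that $\tr_m(k_j)=\tr_m(l_j)$, so membership in $\calD_\setu$ depends only on the low parts; writing $\calL_\setu$ for the set of low-part vectors satisfying the congruence $\sum_{j\in\setu}\tr_m(l_j)g_j\equiv0\tpmod{p}$, the condition $k_j\ne0$ becomes $(l_j,h_j)\ne(0,0)$ and the tail condition becomes ``$h_j\ge1$ for at least one $j$''.

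Next I would isolate the ``at least one high part'' condition by a product-telescoping identity. For fixed low parts set $S_j^{\mathrm{low}}(l_j):=b^{-\alpha\psi_b(l_j)}$ for $l_j\ne0$ (and $S_j^{\mathrm{low}}(0):=0$), the $h_j=0$ contribution, and $S_j^{\mathrm{high}}(l_j):=\sum_{h\ge1}b^{-\alpha\psi_b(l_j+b^m h)}$, so that $S_j^{\mathrm{low}}+S_j^{\mathrm{high}}$ is the full single-coordinate sum over all admissible high parts. Then $\prod_{j\in\setu}(S_j^{\mathrm{low}}+S_j^{\mathrm{high}})-\prod_{j\in\setu}S_j^{\mathrm{low}}=\sum_{\emptyset\ne\setv\subseteq\setu}\prod_{j\in\setv}S_j^{\mathrm{high}}\prod_{j\in\setu\setminus\setv}S_j^{\mathrm{low}}$ captures exactly the tail, and summing over $\bsl_\setu\in\calL_\setu$ recovers the support-$\setu$ contribution to $e-T$. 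The key quantitative input is the uniform bound $S_j^{\mathrm{high}}(l_j)\le N^{-\alpha}\mu_b(\alpha)$ (with $N=b^m$), which follows from $\psi_b(l_j+b^m h)\ge m+\psi_b(h)$ for $h\ge1$ together with the definition of $\mu_b(\alpha)$.

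Finally I would sum over the low parts subject to the congruence. For a nonempty $\setv$ I would single out one pivot coordinate $j_0\in\setv$: fixing all other low parts, the congruence determines $l_{j_0}$ uniquely (using $\gcd(g_j,p)=1$ and that $\tr_m$ bijects $\{0,\dots,b^m-1\}$ onto the degree-$<m$ residues modulo $p$). Bounding $S_{j_0}^{\mathrm{high}}$ by the uniform estimate and summing the remaining coordinates freely via $\sum_{l=0}^{b^m-1}S^{\mathrm{high}}(l)=N^{1-\alpha}\mu_b(\alpha)$ and $\sum_{l=0}^{b^m-1}S^{\mathrm{low}}(l)\le\mu_b(\alpha)$, the support-$\setu$, fixed-$\setv$ term is at most $\mu_b(\alpha)^{\abs{\setu}}\,N^{(1-\alpha)\abs{\setv}-1}$. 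Summing over $\emptyset\ne\setv\subseteq\setu$ and using the convexity bound $(1+t)^{n}\le1+t(2^{n}-1)$ with $t=N^{1-\alpha}\in(0,1]$ collapses the subset sum to $N^{-\alpha}(2\mu_b(\alpha))^{\abs{\setu}}$; multiplying by $\gamma_\setu$ and summing over $\setu$ then yields the claimed bound.

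The main obstacle is the last step: extracting the full factor $N^{-\alpha}$ rather than the weaker $N^{-1}$. Each high coordinate beyond the pivot contributes only a factor $N^{1-\alpha}\le1$, so a careless estimate loses a power of $N^{\alpha-1}$; the convexity inequality is precisely what combines the contributions of all subsets $\setv$ into the clean constant $2^{\abs{\setu}}$ while preserving the single saved factor $N^{-\alpha}$ coming from the pivot coordinate. A secondary technical point is to verify that the congruence genuinely decouples under the pivot choice, so that the non-pivot coordinates may be summed independently.
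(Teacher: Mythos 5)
Your proposal is correct: every step can be carried out as described, and the constants land exactly on the claimed bound. Your route shares the paper's two essential ingredients --- the fact that $\gcd(g_j,p)=1$ makes the congruence uniquely solvable in one designated coordinate (this is where the factor $N^{-\alpha}\mu_b(\alpha)$ is gained), and the free summation $\sum_{k\ge 1} b^{-\alpha\psi_b(k)}=\mu_b(\alpha)$ in the remaining coordinates --- but it organizes the tail differently. The paper splits into the cases $\abs{\setu}=1$ (computed exactly: the one-dimensional tail equals $\gamma_j\,\mu_b(\alpha)\,b^{-\alpha m}$) and $\abs{\setu}\ge 2$, and for the latter applies a union bound over which single coordinate $i\in\setu$ exceeds $b^m$; that oversized coordinate is summed block-by-block over $\{tb^m,\ldots,(t+1)b^m-1\}$, each block containing exactly one solution of the congruence, while the other coordinates are summed over all of $\N$ with the congruence dropped. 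This overcounts vectors having several large coordinates and produces a factor $\abs{\setu}$, which the paper absorbs via the crude inequality $\abs{\setu}\le 2^{\abs{\setu}}$. You instead partition the tail exactly according to the full set $\setv$ of oversized coordinates (your low/high product identity), keep the congruence only in a pivot $j_0\in\setv$ acting on the low parts, and recover the constant $2^{\abs{\setu}}$ from the convexity inequality $(1+N^{1-\alpha})^{\abs{\setu}}-1\le N^{1-\alpha}(2^{\abs{\setu}}-1)$, valid since $N^{1-\alpha}\in(0,1]$. Your version needs no case split and no union-bound overcounting, and its intermediate identities (e.g.\ $\sum_{l=0}^{b^m-1}S^{\mathrm{high}}(l)=N^{1-\alpha}\mu_b(\alpha)$) are exact rather than one-sided, at the cost of heavier combinatorial bookkeeping; the paper's argument is shorter. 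Both approaches arrive at precisely the same constant $(2\mu_b(\alpha))^{\abs{\setu}}$, so neither is quantitatively sharper in the end.
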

\begin{proof}
	For a non-empty subset $\emptyset\neq\setu \subseteq \{1\mcol d\}$ and $i \in \{1\mcol d\}$, 
	we write for short $\bsk_{\setu\setminus\{i\}} \in \N_0^{\abs{\setu}-1}$ and 
	$\bsg_{\setu\setminus\{i\}} \in G_{b,m}^{\abs{\setu}-1}$ to denote the projections on the components in $\setu\setminus\{i\}$. The difference can then be rewritten as 
	\begin{align*}
		e_{b^m,d,\alpha,\bsgamma}(\bsg) - T_{\alpha,\bsgamma}(\bsg,p)
		=
		\sum_{\emptyset\neq \setu \subseteq \{1:d\}} \left(\sum_{\bsk_\setu \in \calD_{\setu}(\bsg_\setu)} (r_{\alpha,\bsgamma}(\bsk_\setu))^{-1} - \sum_{\bsk_\setu \in 
		A^\ast_{p,\setu}(\bsg_\setu)} 
		(r_{\alpha,\bsgamma}(\bsk_\setu))^{-1} \right), 
	\end{align*}
	motivating us to define the quantity
	\begin{equation*}
		S_{\alpha,\bsgamma,\setu}
		:= 
		\sum_{\bsk_\setu \in \calD_{\setu}(\bsg_\setu)} (r_{\alpha,\bsgamma}(\bsk_\setu))^{-1} - \sum_{\bsk_\setu \in A^\ast_{p,\setu}(\bsg_\setu)} (r_{\alpha,\bsgamma}(\bsk_\setu))^{-1} 
	\end{equation*}
	for $\emptyset \ne \setu\subseteq\{1:d\}$. In the following we distinguish two cases. \\ \\
	\noindent \textbf{Case 1:} Suppose that $\abs{\setu}=1$ such that $\setu = \{j\}$ for some $j \in \{1\mcol d\}$.
	Then, we have
	\begin{align*}
		S_{\alpha,\bsgamma,\{j\}}
		&=
		\sum_{\substack{k \in \N \\ \tr_m(k) \, g_j \equiv 0 \tpmod{p}}} (r_{\alpha,\gamma_j}(k))^{-1} 
		- \sum_{\substack{k \in \{1,\ldots,b^m-1\} \\ \tr_m(k) \, g_j \equiv 0 \tpmod{p}}} (r_{\alpha,\gamma_j}(k))^{-1}\\
		&=
		\sum_{\substack{k \ge b^m \\ \tr_m(k) \, g_j \equiv 0 \tpmod{p}}} (r_{\alpha,\gamma_j}(k))^{-1} .
	\end{align*}
	Note that $\tr_m(k) \, g_j \equiv 0 \tpmod{p}$ if and only if there is a $c \in \FF_b[x]$ such that $\tr_m(k) \, g_j = c p$ and thus, since
	$\gcd(g_j,p)=1$, we have that $\tr_m(k) = a p$ for some $a \in \F_b[x]$. But $\deg(\tr_m(k)) < m$ while $\deg(p)=m$, which implies that
	$\tr_m(k) = 0$ and thus $k = t \, b^m$ for some $t \in \N$. This yields
	\begin{align*}
		S_{\alpha,\bsgamma,\{j\}}
		&=
		\sum_{t=1}^{\infty} (r_{\alpha,\gamma_j}(t \, b^m))^{-1}
		=
		\gamma_j \sum_{t=1}^{\infty} b^{-\alpha \floor{\log_b{t \, b^m}}}
		=
		\gamma_j \sum_{t=1}^{\infty} b^{-\alpha \floor{m + \log_b{t}}} \\
		&=
		\gamma_j \sum_{t=1}^{\infty} b^{-\alpha m} \, b^{-\alpha \floor{\log_b{t}}}
		=
		\frac{\gamma_j}{b^{\alpha m}} \sum_{t=1}^{\infty} b^{-\alpha \psi_b(t)}
		=
		\gamma_j \frac{\mu_b(\alpha)}{b^{\alpha m}} .
	\end{align*}
	\noindent \textbf{Case 2:} Suppose that $\abs{\setu}\ge 2$. In this case, we find that
	\begin{equation*}
		S_{\alpha,\bsgamma,\setu} 
		\le
		\sum_{i\in\setu} \sum_{\bsk_{\setu\setminus \{i\}} \in \N^{\abs{\setu}-1}} \sum_{k_i \ge b^m} 
		\frac{\delta_p (\tr_m(k_i) g_i + \tr_m(\bsk_{\setu \setminus \{i\}})\cdot \bsg_{\setu \setminus \{i\}})}{r_{\alpha,\bsgamma}(\bsk_\setu)} 
		.
	\end{equation*}
	Then, for $\bsk_{\setu\setminus \{i\}} \in \N^{\abs{\setu}-1}$, we write $q = \tr_m(\bsk_{\setu \setminus \{i\}})\cdot \bsg_{\setu \setminus \{i\}}$,
	and estimate the expression
	\begin{align*}
		\sum_{k_i \ge b^m} \!\! \frac{\delta_p (\tr_m(k_i) g_i + q)}{r_{\alpha,\bsgamma}(\bsk_\setu)}
		&=
		\gamma_{\setu} \! \sum_{k_i \ge b^m} \frac{\delta_p (\tr_m(k_i) g_i + q)}{\prod_{j \in \setu} b^{\alpha \floor{\log_b{k_j}}}}\\
		&=
		\gamma_\setu \prod_{\substack{j\in\setu\\ j\ne i}} b^{-\alpha \floor{\log_b{k_j}}} \! \sum_{k_i \ge b^m} \frac{\delta_p (\tr_m(k_i) g_i + q)}{b^{\alpha \floor{\log_b{k_i}}}} \\
		&=
		\gamma_\setu \prod_{\substack{j\in\setu\\ j\ne i}} b^{-\alpha \floor{\log_b{k_j}}}
		\sum_{t=1}^\infty \sum_{k_i=t b^m}^{(t+1)b^m-1} \frac{\delta_p (\tr_m(k_i) g_i + q)}{b^{\alpha \floor{\log_b{k_i}}}} \\
		&\le
		\gamma_\setu \prod_{\substack{j\in\setu\\ j\ne i}} b^{-\alpha \floor{\log_b{k_j}}}
		\sum_{t=1}^\infty b^{-\alpha \floor{\log_b{t b^m}}} \underbrace{\sum_{k_i=t b^m}^{(t+1)b^m-1} \delta_p (\tr_m(k_i) g_i + q)}_{= 1} \\
		&=
		\gamma_\setu \prod_{\substack{j\in\setu\\ j\ne i}} b^{-\alpha \floor{\log_b{k_j}}}
		\sum_{t=1}^\infty b^{-\alpha \floor{m + \log_b t}}
		=
		\gamma_\setu \frac{\mu_b(\alpha)}{b^{\alpha m}} \prod_{\substack{j\in\setu\\ j\ne i}} b^{-\alpha \floor{\log_b{k_j}}} ,
	\end{align*}
	where the penultimate equality follows since if $\gcd(g_i,p)=1$ then for each $t$ and each $q \in \F_b[x]$ there exists exactly one 
	$k \in \{t b^m,\ldots, (t+1)b^m -1\}$ such that $\tr_m(k) g_i + q \equiv 0 \tpmod{p}$.
	
	Hence, we can estimate $S_{\alpha,\bsgamma,\setu}$, for $|\setu| \ge 2$, by
	\begin{align*}
			S_{\alpha,\bsgamma,\setu} 
			&\le
			\sum_{i\in\setu} \sum_{\bsk_{\setu\setminus \{i\}} \in \N^{\abs{\setu}-1}} 
			\gamma_\setu \frac{\mu_b(\alpha)}{b^{\alpha m}} \prod_{\substack{j\in\setu\\ j\ne i}} b^{-\alpha \floor{\log_b{k_j}}}
			=
			\gamma_\setu \frac{\mu_b(\alpha)}{b^{\alpha m}} \sum_{i\in \setu} \left(\sum_{k=1}^{\infty} b^{-\alpha \floor{\log_b{k}}} \right)^{\abs{\setu}-1} \\
			&=
			\gamma_\setu \frac{\mu_b(\alpha)}{b^{\alpha m}} \sum_{i\in \setu} \mu_b(\alpha)^{\abs{\setu}-1}
			=
			\gamma_\setu \frac{\mu_b(\alpha)^{\abs{\setu}}}{b^{\alpha m}} \abs{\setu}
			\le
			\gamma_\setu \frac{1}{N^\alpha} (2 \mu_b(\alpha))^{\abs{\setu}} 
			.
	\end{align*}
	In summary, we obtain, using the results for both cases from above,
	\begin{align*}
		\sum_{\emptyset\neq \setu \subseteq \{1:d\}} S_{\alpha,\bsgamma,\setu} 
		\le
		\frac{1}{N^{\alpha}} \sum_{\emptyset\neq \setu \subseteq \{1:d\}} \gamma_{\setu} (2 \mu_b(\alpha))^{\abs{\setu}}
		,
	\end{align*}
	which is the claimed upper estimate.
\end{proof}

Based on the previous result, it is straightforward to show the existence of good polynomial lattice rules with respect to the worst-case error in the weighted Walsh space,
if one assumes the modulus $p$ to be irreducible. We omit the proof, which uses standard methods.

\begin{theorem} \label{thm:exist_wce}
	Let $p \in \F_b[x]$ be an irreducible polynomial with $\deg(p)=m$, let $N=b^{m}$, and let $\bsgamma = (\gamma_j)_{j\ge 1}$ be positive weights.
	Then there exists a $\bsg \in G_{b,m}^d$ such that, for all $\alpha >1$, the worst-case error $e_{b^m,d,\alpha,\bsgamma}(\bsg)$ satisfies
	\begin{align*}
	    e_{b^m,d,\alpha,\bsgamma}(\bsg) 
	    \le 
	    \frac{1}{N^\alpha} \left(\sum_{\emptyset\neq \setu \subseteq \{1:d\}} \gamma_{\setu} (2\mu_b(\alpha))^{|\setu|} 
	    + \left(\sum_{\emptyset\neq \setu \subseteq \{1:d\}} \gamma_{\setu}^{1/\alpha}(m(b-1))^{|\setu|}\right)^{\alpha}\right)
	    .
	\end{align*}
\end{theorem}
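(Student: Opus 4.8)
The plan is to reduce the statement, via Proposition~\ref{prop:trunc_error}, to an averaging argument over generating vectors. Since $p$ is irreducible of degree $m$, a polynomial $g \in G_{b,m}$ satisfies $\gcd(g,p)=1$ precisely when $g \neq 0$, i.e.\ when $g \in G^\ast_{b,m}$; hence the hypotheses of Proposition~\ref{prop:trunc_error} hold for every $\bsg \in (G^\ast_{b,m})^d \subseteq G_{b,m}^d$, and for such $\bsg$ that proposition already supplies the first summand $\tfrac{1}{N^\alpha}\sum_{\emptyset\neq\setu}\gamma_\setu(2\mu_b(\alpha))^{\abs{\setu}}$. It therefore remains to exhibit one admissible $\bsg$ with
\[
 T_{\alpha,\bsgamma}(\bsg,p) \le \frac{1}{N^{\alpha}}\left(\sum_{\emptyset\neq\setu\subseteq\{1\mcol d\}}\gamma_{\setu}^{1/\alpha}\,(m(b-1))^{\abs{\setu}}\right)^{\alpha}.
\]

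The first key step is to linearise the $\alpha$-dependence by a power-mean (Jensen) inequality. Writing $T_{\alpha,\bsgamma}(\bsg,p)=\sum_{\emptyset\neq\setu}\sum_{\bsk_\setu\in A^\ast_{p,\setu}(\bsg_\setu)}(r_{\alpha,\bsgamma}(\bsk_\setu))^{-1}$ and using $(\sum_i a_i)^{1/\alpha}\le\sum_i a_i^{1/\alpha}$ for nonnegative $a_i$ (valid since $1/\alpha<1$), I would bound
\[
 T_{\alpha,\bsgamma}(\bsg,p)^{1/\alpha} \le \sum_{\emptyset\neq\setu\subseteq\{1\mcol d\}}\gamma_{\setu}^{1/\alpha}\sum_{\bsk_\setu\in A^\ast_{p,\setu}(\bsg_\setu)}\prod_{j\in\setu} b^{-\psi_b(k_j)},
\]
since $(r_{\alpha,\bsgamma}(\bsk_\setu))^{-1/\alpha}=\gamma_{\setu}^{1/\alpha}\prod_{j\in\setu}b^{-\psi_b(k_j)}$; note this trades the smoothness-$\alpha$ decay for the smoothness-$1$ decay and the weights $\gamma$ for $\gamma^{1/\alpha}$, which is exactly the shape appearing in the target bound. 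I would also record here that no singleton $\setu=\{j\}$ contributes, because $\gcd(g_j,p)=1$ and $\deg\tr_m(k)<m=\deg p$ force $\tr_m(k)\,g_j\not\equiv 0 \tpmod{p}$ (as in Case~1 of Proposition~\ref{prop:trunc_error}), so only genuinely interacting index sets survive.

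Next I would average the right-hand side over $\bsg\in(G^\ast_{b,m})^d$. Interchanging the finite sums, the mean of $\sum_{\bsk_\setu\in A^\ast_{p,\setu}(\bsg_\setu)}\prod_{j\in\setu}b^{-\psi_b(k_j)}$ equals $\sum_{\bsk_\setu}\prod_{j\in\setu}b^{-\psi_b(k_j)}$ weighted by the proportion of generators $\bsg_\setu$ for which $\tr_m(\bsk_\setu)\cdot\bsg_\setu\equiv 0\tpmod{p}$. This is where irreducibility is essential: in the field $\F_b[x]/(p)$ each $\tr_m(k_j)$ with $1\le k_j\le b^m-1$ is a nonzero, hence invertible, element, so fixing all but one component determines the remaining one, making that proportion of order $N^{-1}$. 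Combined with the elementary identity $\sum_{k=1}^{b^m-1}b^{-\psi_b(k)}=\sum_{a=0}^{m-1}(b-1)=m(b-1)$, the mean is bounded by $\tfrac{1}{N}\sum_{\emptyset\neq\setu}\gamma_\setu^{1/\alpha}(m(b-1))^{\abs{\setu}}$. Since at least one $\bsg$ does no worse than the mean, raising that estimate to the power $\alpha$ yields the required bound for $T_{\alpha,\bsgamma}(\bsg,p)$, and adding the Proposition~\ref{prop:trunc_error} term gives the theorem.

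The main obstacle I anticipate is the counting step: one must count sharply enough the admissible generators $\bsg_\setu$ annihilating a prescribed nonzero frequency $\bsk_\setu$ modulo the irreducible $p$, and convert this into the clean factor $N^{-1}$ while staying inside $G^\ast_{b,m}$ so that the $\gcd$ hypothesis of Proposition~\ref{prop:trunc_error} stays in force; the vanishing of the singleton terms is precisely what makes this reconciliation possible, since it removes the frequencies on which a zero component would misbehave. The remaining ingredients — the Jensen step, the geometric sum producing $m(b-1)$, and the mean-beats-minimum existence argument — are routine.
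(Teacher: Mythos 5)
The paper itself omits the proof of Theorem~\ref{thm:exist_wce} (``We omit the proof, which uses standard methods''), so there is no argument to compare against line by line. Your proposal follows what is presumably the intended route (reduction via Proposition~\ref{prop:trunc_error}, then Jensen's inequality, then an average-beats-minimum argument), and several of its steps are correct: the reduction, the Jensen step $T_{\alpha,\bsgamma}(\bsg,p)^{1/\alpha}\le\sum_{\emptyset\neq\setu}\gamma_\setu^{1/\alpha}S_\setu(\bsg)$ with $S_\setu(\bsg):=\sum_{\bsk_\setu\in A^\ast_{p,\setu}(\bsg_\setu)}\prod_{j\in\setu}b^{-\psi_b(k_j)}$, the vanishing of singleton sets, and the identity $\sum_{k=1}^{b^m-1}b^{-\psi_b(k)}=m(b-1)=:M$. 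Nevertheless, there are two genuine gaps.

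First, the quantifier over $\alpha$. The theorem asserts the existence of \emph{one} $\bsg$ valid for \emph{all} $\alpha>1$ simultaneously. The quantity you average, $\sum_{\emptyset\neq\setu}\gamma_\setu^{1/\alpha}S_\setu(\bsg)$, depends on $\alpha$ through the weights $\gamma_\setu^{1/\alpha}$, so ``at least one $\bsg$ does no worse than the mean'' produces a vector $\bsg=\bsg(\alpha)$; what your argument proves is the weaker statement with the quantifiers interchanged. Plain averaging cannot repair this: averaging over all of $G_{b,m}^d$, the deviations $c_\setu(\bsg):=S_\setu(\bsg)-M^{|\setu|}/N$ have mean zero for each $\setu$, but vectors with componentwise zero mean need not contain a single member making every weighted combination $\sum_{\emptyset\neq\setu}\gamma_\setu^{t}\,c_\setu(\bsg)$, $t\in(0,1)$, nonpositive. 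This uniformity in $\alpha$ is exactly the delicate point of the theorem; note that the paper's constructive counterpart (Theorem~\ref{thm:main}) obtains an ``all $\alpha>1$'' conclusion only by letting the \emph{space} weights depend on $\alpha$ (bounds in $W^{\alpha}_{d,\bsgamma^\alpha}$ rather than $W^{\alpha}_{d,\bsgamma}$), precisely so that the averaged/minimized criterion $T_{\bsgamma}$ is $\alpha$-free. Your proposal needs an analogous idea and does not contain one.

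Second, the counting step, which you flagged as the main obstacle but resolved incorrectly. Averaging over $(G^\ast_{b,m})^d$: for fixed $\bsk_\setu$ with all components in $\{1,\ldots,b^m-1\}$, the substitution $h_j=\tr_m(k_j)\,g_j \bmod p$ (each $\tr_m(k_j)$ is a unit since $p$ is irreducible) shows that the number of annihilating $\bsg_\setu\in(G^\ast_{b,m})^{|\setu|}$ equals the number of $|\setu|$-tuples of nonzero elements of $\F_{b^m}$ summing to zero, namely $\bigl((b^m-1)^{|\setu|}+(-1)^{|\setu|}(b^m-1)\bigr)/b^m$. The proportion is therefore $\frac{1}{b^m}\bigl(1+\frac{(-1)^{|\setu|}}{(b^m-1)^{|\setu|-1}}\bigr)$, which for $|\setu|=2$ equals $\frac{1}{b^m-1}>\frac{1}{N}$. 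Your claimed mean bound $\frac{1}{N}\sum_{\emptyset\neq\setu}\gamma_\setu^{1/\alpha}M^{|\setu|}$ is then equivalent (using the vanishing singletons) to $\prod_{j=1}^{d}(1-x_j)\le 1$ with $x_j=\gamma_j^{1/\alpha}M/(b^m-1)$; this holds for small weights (e.g.\ $\gamma_j\le1$) but fails for the arbitrary positive weights the theorem allows (two coordinates with $x_j=3$ give $\prod_j(1-x_j)=4$). Alternatively, one can average over all of $G_{b,m}^d$, where the solution set is an exact hyperplane of size $b^{m(|\setu|-1)}$ and the proportion is exactly $1/N$ --- this matches the theorem's claim $\bsg\in G_{b,m}^d$ --- but then the selected $\bsg$ may contain zero components, and for those Proposition~\ref{prop:trunc_error} is not merely inapplicable, its conclusion is false: with $g_j=0$ the singleton truncation error equals $\gamma_j\,\mu_b(\alpha)\,N^{-(\alpha-1)}$, i.e.\ of order $N^{-(\alpha-1)}$ rather than $N^{-\alpha}$. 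So, as written, the two halves of your argument (the truncation bound requiring $\gcd(g_j,p)=1$, and the clean $1/N$ average) cannot both be in force, and an additional idea is needed to reconcile them.
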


Even though the result in Theorem \ref{thm:exist_wce} assures us that there always exist 
generating vectors of polynomial lattice point sets which are in a certain sense good, the result is not constructive. The 
road which we will take in the present paper is slightly different. Instead of assuming an irreducible modulus $p$, we will 
assume that $p$ has the special form $p(x)=p_m(x)=x^m$, and show a constructive approach to find generating vectors 
of good polynomial lattice rules. This will be the main result of our paper, which is stated in Theorem \ref{thm:main}.

\section{The CBC-DBD construction for polynomial lattice rules} \label{sec:cbc-dbd}

In this section, we formulate and analyze a method for the construction of good polynomial lattice rules. In contrast to the existence result in Theorem \ref{thm:exist_wce}, our construction method yields polynomial lattice rules with modulus $p(x)=x^m$. At first, we prove some auxiliary statements which will be needed in the further analysis.

\subsection{Preliminary results}

We consider the following Walsh series for $x\in (0,1)$, based on the decay function $r_1$,
\begin{equation*}
	\sum_{k=0}^\infty \frac{\wal_k(x)}{r_1(k)}
	=
	1 + \sum_{k=1}^\infty \frac{e^{2 \pi \icomp (\kappa_{0} \xi_1 + \kappa_{1} \xi_2 + \cdots )/b}}{b^{\floor{\log_b(k)}}}
	,
\end{equation*}
which, as we will see, is closely related to our quality criterion $T_{\bsgamma}$ introduced in \eqref{eq:qual_meas}. 
To this end, we define, for $n \in \N$, the $n$-th Walsh--Dirichlet kernel by
\begin{equation*} 
	D_n(x)=\sum_{k=0}^{n-1} { \wal_k(x)}.
\end{equation*}
From \cite[Lemma A.17]{DP10} it then follows that, for $x \in (0,1)$,
\begin{equation} \label{eq:w-d_kernel}
	D_{b^t}(x)
	=
	\left\{\begin{array}{cc}
		b^t, & \text{if } x\in (0, \frac{1}{b^t} ), \\
		0 , & \text{if } x\in [ \frac{1}{b^t}, 1 ).
	\end{array}\right.
\end{equation}
We can then prove the following identity.

\begin{lemma} \label{lem:Walsh-series}
	For base $b \ge 2$, the Walsh series of $-(b-1) (\floor{\log_b(x)} + 1)$ equals, pointwise for $x\in (0,1)$,
	\begin{equation*}
	-(b-1) (\floor{\log_b(x)} + 1)
	=
	1 + \sum_{k=1}^\infty \frac{e^{2 \pi \icomp (\kappa_{0} \xi_1 + \kappa_{1} \xi_2 + \cdots )/b}}{b^{\floor{\log_b(k)}}} 
	=
	\sum_{k=0}^\infty \frac{ \wal_k(x)}{r_{1}(k)} .
	\end{equation*}
\end{lemma}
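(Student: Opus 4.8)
The plan is to prove the second equality, that $\sum_{k=0}^\infty \wal_k(x)/r_1(k)$ equals the step function $-(b-1)(\floor{\log_b(x)}+1)$; the first equality is merely the definition of $\wal_k$ together with splitting off the term $k=0$ (for which $r_1(0)=1$). The starting observation is that the weight $1/r_1(k)=b^{-\floor{\log_b(k)}}$ is constant, equal to $b^{-a}$, on each $b$-adic block $b^a\le k<b^{a+1}$. I would therefore group the series by these blocks and rewrite each block sum as a difference of Walsh--Dirichlet kernels,
\begin{equation*}
	\sum_{k=b^a}^{b^{a+1}-1}\wal_k(x) = D_{b^{a+1}}(x) - D_{b^a}(x),
\end{equation*}
so that the partial sum over the first $T$ blocks becomes a telescoping expression in the quantities $D_{b^t}(x)$, whose explicit values are supplied by \eqref{eq:w-d_kernel}.

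Next I would fix $x\in(0,1)$ and set $c:=-\floor{\log_b(x)}\in\N$, so that $b^{-c}\le x<b^{-(c-1)}$. By \eqref{eq:w-d_kernel} this gives $D_{b^t}(x)/b^t=1$ for $t\le c-1$ and $D_{b^t}(x)=0$ for $t\ge c$, so only finitely many kernels survive. A summation by parts on the block decomposition then yields, for the partial sum taken at a block boundary $n=b^T$,
\begin{equation*}
	\sum_{k=0}^{b^T-1}\frac{\wal_k(x)}{r_1(k)} = (b-1)\sum_{a=1}^{T-1}\frac{D_{b^a}(x)}{b^a} + b\,\frac{D_{b^T}(x)}{b^T},
\end{equation*}
and for $T\ge c$ the right-hand side collapses to $(b-1)(c-1)$. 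Since $-(b-1)(\floor{\log_b(x)}+1)=-(b-1)(1-c)=(b-1)(c-1)$, this already identifies the limit along block boundaries with the claimed value.

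The delicate point, and the main obstacle, is convergence: the series is not absolutely convergent, since each block contributes $(b-1)b^a$ terms of modulus $b^{-a}$, so its block-wise sum of absolute values is the constant $b-1$. The computation above only controls partial sums taken at the boundaries $n=b^T$, and I must still show that $S_n(x)=\sum_{k=0}^{n-1}\wal_k(x)/r_1(k)$ converges to the same limit for general $n$. Expanding $S_n$ by summation by parts produces the boundary term $b^{-\floor{\log_b(n-1)}}D_n(x)$ alongside the (stabilizing) block contributions, so it suffices to bound the general Dirichlet kernel $\abs{D_n(x)}$ uniformly in $n$. This bound I would obtain from the digit structure of the fixed $x$: since its first $c-1$ base-$b$ digits vanish while its $c$-th digit does not, any complete sum of $b$ Walsh functions $\wal_k(x)$ whose indices run through all values of the digit in position $c-1$ cancels, being a geometric sum of nontrivial $b$-th roots of unity; grouping the terms of $D_n(x)$ this way leaves only a bounded remainder and gives $\abs{D_n(x)}\le b^{c}$ uniformly in $n$. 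Consequently the boundary term is $O(b^{-\floor{\log_b(n-1)}})\to 0$, which upgrades the boundary-point limit to genuine pointwise convergence. An alternative route, avoiding summation by parts, is to compute the Walsh coefficients $\hat h(k)=\int_0^1 h(x)\,\overline{\wal_k(x)}\rd x$ of $h(x):=-(b-1)(\floor{\log_b(x)}+1)$ directly and check that they equal $1/r_1(k)$; this trades the convergence bookkeeping for a sequence of elementary integrals over the $b$-adic intervals on which $h$ is constant.
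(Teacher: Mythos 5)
Your proposal is correct, and its computational core coincides with the paper's own proof: both group the series into the $b$-adic blocks $b^{a}\le k<b^{a+1}$, express the block sums through the kernels $D_{b^t}$, and evaluate these via \eqref{eq:w-d_kernel}; your Abel-summation identity $S_{b^T}=(b-1)\sum_{a=1}^{T-1}b^{-a}D_{b^a}(x)+b^{1-T}D_{b^T}(x)$ is a rearranged form of the paper's telescoping evaluation, and both give the value $(b-1)(c-1)$ with $c=-\floor{\log_b(x)}$, matching $-(b-1)(\floor{\log_b(x)}+1)$. Where you genuinely go beyond the paper is the convergence step: the paper evaluates only the block-grouped series, i.e.\ it controls the partial sums at the boundaries $n=b^T$, whereas you additionally show that the ordinary partial sums $S_n$ converge for arbitrary $n$, using the uniform bound $\abs{D_n(x)}\le b^{c}$ obtained from the digit cancellation $\sum_{s=0}^{b-1}e^{2\pi \icomp s\xi_c/b}=0$ (valid because $\xi_1=\dots=\xi_{c-1}=0$ and $\xi_c\ne 0$ for $x\in[b^{-c},b^{-c+1})$). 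That extra argument is sound: writing $T=\floor{\log_b(n-1)}$ one has $S_n=S_{b^T}+b^{-T}\left(D_n(x)-D_{b^T}(x)\right)$, and the correction is $O(b^{c-T})\to 0$. This establishes the lemma's literal claim of pointwise convergence of the Walsh series, a point the paper's proof passes over by implicitly identifying the series with its grouped version. For the paper's later use the weaker statement suffices, since Lemma~\ref{lem:trunc_walsh_series} and Theorem~\ref{thm:CBCDBD_split} only ever truncate at block boundaries $N=b^m$; your version buys the stronger, stated form of the lemma at the cost of one additional kernel estimate.
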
 
\begin{proof}
Using the definition of the Walsh-Dirichlet kernel, we obtain
\begin{equation*} 
	\sum_{k=1}^\infty \frac{\wal_{k}(x)}{b^{\lfloor\log_b(k)\rfloor}} 
	=
	\sum_{t=1}^{\infty} \sum_{k=b^{t-1}}^{b^t-1} \frac{ \wal_{k}(x)}{b^{t-1}}
	=
	\sum_{t=1}^{\infty}\frac{D_{b^t}(x)- D_{b^{t-1}}(x)}{b^{t-1}},
\end{equation*}
and from \eqref{eq:w-d_kernel} we find that for $t \ge 1$ we have
\begin{equation} \label{eq:diff_w-d_kernel}
	D_{b^t}(x)- D_{b^{t-1}}(x)
	=
	\left\{\begin{array}{cc}
		(b-1)b^{t-1}, & \text{if } x\in \left(0, \frac{1}{b^t} \right), \\
		-b^{t-1}, & \text{if } x\in \left[\frac{1}{b^t}, \frac{1}{b^{t-1}} \right), \\
		0, & \text{if } x\in \left[\frac{1}{b^{t-1}}, 1 \right).
	\end{array}\right.
\end{equation}
Applied inductively, the relation in \eqref{eq:diff_w-d_kernel} yields that for $x\in [\frac{1}{b^t}, \frac{1}{b^{t-1}})$ we have
\begin{align*} 
	\sum_{\ell=1}^{\infty}\frac{D_{b^\ell}(x)- D_{b^{\ell-1}}(x)}{b^{\ell-1}}
	&=
	\sum_{\ell=1}^{t-1} \frac{(b-1)b^{\ell-1}}{b^{\ell-1}} - \frac{b^{t-1}}{b^{t-1}}
	=
	(t-1)(b-1) - 1
\end{align*}
for all $t \ge 1$, which is equivalent to
\begin{equation*} 
	1 + \sum_{k=1}^\infty \frac{\wal_{k}(x)}{b^{\floor{\log_b(k)}}}
	=
	(b-1)(t-1)
	=
	-(b-1)(-t+1)
	=
	-(b-1) (\floor{\log_b(x)} + 1)
\end{equation*}
for $x\in [\frac{1}{b^t}, \frac{1}{b^{t-1}})$ and for all $t \in \N$. This proves the claimed identity.
\end{proof}

Based on the previous result in Lemma \ref{lem:Walsh-series}, we show that the function $-(b-1) (\floor{\log_b(x)} + 1)$ 
can be written in terms of its truncated Walsh series with uniformly bounded remainder term.

\begin{lemma}\label{lem:trunc_walsh_series}
	Let $N=b^m$ with $m\in \N$ and base $b \ge 2$. Then for any $x \in (0,1)$ there exists a $\tau = \tau(x)\in\R$ with 
	$|\tau(x)|< \frac{b}{b-1}$ such that
	\begin{equation} \label{eq:trunc_walsh_series}
		-(b-1) (\floor{\log_b(x)} + 1)
		=
		\sum_{k=0}^{N-1} \frac{\wal_k(x)}{r_{1}(k)} + \frac{\tau(x)}{N x}
		.
	\end{equation}
\end{lemma}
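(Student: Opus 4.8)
The plan is to read the claimed formula \eqref{eq:trunc_walsh_series} as a statement about the tail of the series in \RefLem{lem:Walsh-series}. Since that lemma already gives $-(b-1)(\floor{\log_b(x)}+1) = \sum_{k=0}^\infty \wal_k(x)/r_1(k)$ pointwise on $(0,1)$, subtracting the partial sum $\sum_{k=0}^{N-1}\wal_k(x)/r_1(k)$ shows that \eqref{eq:trunc_walsh_series} holds with $\tau(x) := Nx\sum_{k=N}^\infty \wal_k(x)/r_1(k)$, provided this tail converges. Hence the identity itself is automatic, and the entire content of the lemma is the uniform bound $|\tau(x)| < b/(b-1)$; equivalently, $\bigl|\sum_{k=N}^\infty \wal_k(x)/r_1(k)\bigr| < \tfrac{b}{(b-1)Nx}$.

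First I would rewrite the tail in the block form already used in the proof of \RefLem{lem:Walsh-series}. Grouping the indices $k \in \{b^{t-1},\dots,b^t-1\}$ (on which $r_1(k)=b^{t-1}$) and using $\sum_{k=b^{t-1}}^{b^t-1}\wal_k(x) = D_{b^t}(x)-D_{b^{t-1}}(x)$ gives, since $N=b^m$, the expression $\sum_{k=N}^\infty \wal_k(x)/r_1(k) = \sum_{t=m+1}^\infty (D_{b^t}(x)-D_{b^{t-1}}(x))/b^{t-1}$. For a fixed $x$, the kernel evaluation \eqref{eq:w-d_kernel}, or equivalently its increment form \eqref{eq:diff_w-d_kernel}, forces these differences to vanish as soon as $b^{t-1}\ge 1/x$, so the tail is in fact a finite sum that can be evaluated explicitly.

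The natural case split is by the block containing $x$: let $w = -\floor{\log_b(x)}\ge 1$, so that $x\in[b^{-w},b^{-w+1})$. If $w\le m$, every term with $t\ge m+1$ falls in the vanishing branch of \eqref{eq:diff_w-d_kernel}, so the tail is $0$ and $\tau(x)=0$. If $w\ge m+1$, only the terms with $m+1\le t\le w$ survive, and summing the contributions $(b-1)b^{t-1}$ (for $t<w$) together with the single contribution $-b^{w-1}$ (for $t=w$), each divided by $b^{t-1}$, yields the closed form $\sum_{k=N}^\infty \wal_k(x)/r_1(k) = (b-1)(w-m-1)-1$.

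It then remains to bound $\tau(x) = b^m x\,[(b-1)(w-m-1)-1]$. Writing $j:=w-m-1\ge 0$ and using $x<b^{-w+1}$, I get $b^m x < b^{-j}$, whence $|\tau(x)| < b^{-j}\,|(b-1)j-1|$. For $j=0$ this is already $<1$, and for $j\ge 1$ the factor $(b-1)j-1$ is positive with $|\tau(x)| < (b-1)\,j\,b^{-j}$; since $j\mapsto jb^{-j}$ is non-increasing for $j\ge 1$ whenever $b\ge 2$ (because $\frac{(j+1)b^{-(j+1)}}{jb^{-j}}=\frac{j+1}{jb}\le 1$), its maximum is $b^{-1}$, giving $|\tau(x)| < (b-1)/b < 1 < b/(b-1)$. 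I expect the main obstacle to be the bookkeeping in the explicit tail evaluation, namely correctly matching each index $t$ to the appropriate branch of \eqref{eq:diff_w-d_kernel} relative to $w$; once that is done, the required estimate follows from this elementary monotonicity argument, in fact with room to spare.
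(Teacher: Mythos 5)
Your proof is correct, and the final bounding step takes a genuinely different route from the paper's. Both arguments share the same setup: the identity itself follows from Lemma~\ref{lem:Walsh-series}, and the remainder is rewritten in block form as $\sum_{t\ge m}\bigl(D_{b^{t+1}}(x)-D_{b^t}(x)\bigr)/b^{t}$ via \eqref{eq:diff_w-d_kernel}. The paper then proceeds crudely: it bounds each kernel increment uniformly by $\abs{D_{b^{t+1}}(x)-D_{b^t}(x)}<1/x$ and sums the geometric series $\sum_{t\ge m}b^{-t}=\tfrac{b}{(b-1)b^m}$, which immediately gives $|\tau(x)|<\tfrac{b}{b-1}$ in two lines. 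You instead observe that the tail is a \emph{finite} sum and evaluate it in closed form by locating $x\in[b^{-w},b^{-w+1})$: zero if $w\le m$, and $(b-1)(w-m-1)-1$ if $w\ge m+1$, which is exactly consistent with the inductive evaluation inside the paper's proof of Lemma~\ref{lem:Walsh-series}. Your subsequent estimate $b^m x< b^{-j}$ with $j=w-m-1$ and the monotonicity of $j\mapsto jb^{-j}$ then yields $|\tau(x)|<1$, strictly sharper than the paper's constant $\tfrac{b}{b-1}$. What each approach buys: the paper's termwise bound is shorter and requires no case analysis; yours costs more bookkeeping but produces an exact expression for the remainder and a better constant. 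One cosmetic slip: for $b=2$ and $j=1$ the factor $(b-1)j-1$ equals $0$, so it is not ``positive'' as you claim; but in that case $\tau(x)=0$ and your displayed inequality $|\tau(x)|<(b-1)jb^{-j}$ still holds, so the chain of estimates is unaffected.
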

 
\begin{proof}
	The expansion in Lemma \ref{lem:Walsh-series} allows us to write
	\begin{equation*}
		-(b-1) (\floor{\log_b(x)} + 1)
		=
		\sum_{k=0}^{N-1} \frac{\wal_k(x)}{r_{1}(k)} + R_{N}(x)
		,
	\end{equation*}
	where the remainder $R_{N}(x)$ has the form
	\begin{align*}
		R_{N}(x)
		&= 	
		\sum_{k=b^m}^{\infty} \frac{\wal_k(x)}{r_{1}(k)} 
		=
		\sum_{k=b^m}^{\infty} \frac{\wal_k(x)}{b^{\floor{\log_b(k)}}}
		=
		\sum_{t=m}^{\infty}\frac{D_{b^{t+1}}(x)- D_{b^t}(x)}{b^t}
		.
	\end{align*}
	From \eqref{eq:diff_w-d_kernel} we then see that the following inequality holds,
	\begin{equation*} 
		\abs{D_{b^{t+1}}(x)- D_{b^t}(x)}
		< 
		\frac{1}{x}, \quad t\in\N, \ x \in (0,1),
	\end{equation*}
	and thus we obtain
	\begin{equation*}
		\abs{R_{N}(x)}
		=
		\abs{\sum_{t=m}^{\infty}\frac{D_{b^{t+1}}(x)- D_{b^t}(x)}{b^t}}
		<
		\frac{1}{x}\sum_{t=m}^{\infty}\frac{1}{b^t}
		=
		\frac{b}{(b-1)b^m}\frac{1}{x}
		=
		\frac{b}{(b-1)N x},
	\end{equation*}
	which implies the existence of a $\tau(x) \in \R$ with $|\tau(x)|< \frac{b}{b-1}$ such that the identity \eqref{eq:trunc_walsh_series} holds.
\end{proof}

\begin{remark}
	Using a more involved argument, the result in Lemma \ref{lem:trunc_walsh_series} can also be extended to general $N \in \N$. 
	In particular, we obtain that for any $x \in (0,1)$ there exists a $\tau=\tau(x) \in \R$ such that
	\begin{equation*}
		-(b-1) (\floor{\log_b(x)} + 1)
		=
		\sum_{k=0}^{N-1} \frac{\wal_k(x)}{r_{1}(k)} + \frac{\tau}{N x}
	\end{equation*}
	with $\abs{\tau} < b \left( \frac{1}{b-1} + 2 \right)$ for $b=2$ and with $\abs{\tau} < b \left( \frac{1}{b-1} + 2b \right)$ for $b>2$.
\end{remark}

We will also make use of the following lemma, which was proved in \cite{EKNO2020}.

\begin{lemma} \label{lem:diff_prod}
	For $j\in\{1\mcol d\}$, let $u_j, v_j$, and $\rho_j$ be real numbers satisfying
	\begin{align*}
		(a) \quad u_j = v_j + \rho_j, \quad
		(b) \quad |u_j| \le \bar{u}_j, \quad
		(c) \quad \bar{u}_j \geq 1 ,
	\end{align*}
	for all $j\in\{1 \mcol d\}$. Then, for any subset $\emptyset \ne \setu \subseteq \{1 \mcol d\}$ there exists a $\theta_{\setu}$ with $|\theta_{\setu}| \le 1$ such that 
	\begin{align*}
		\prod_{j \in \setu} u_j
		&=
		\prod_{j \in \setu} v_j + \theta_{\setu} \left(\prod_{j \in \setu} (\bar{u}_j+|\rho_j|) \right) \sum_{j \in \setu} |\rho_j|
		.
	\end{align*}
\end{lemma}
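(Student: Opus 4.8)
The plan is to exploit the standard telescoping identity for a difference of products and then bound each resulting term, with assumption (c) entering only at the very end. First I would fix the natural ordering on $\setu \subseteq \{1\mcol d\}$ and write the difference of the two products as a telescoping sum,
$$
\prod_{j \in \setu} u_j - \prod_{j \in \setu} v_j = \sum_{i \in \setu} \left(\prod_{\substack{j \in \setu \\ j < i}} u_j\right) (u_i - v_i) \left(\prod_{\substack{j \in \setu \\ j > i}} v_j\right),
$$
which is the usual identity $a_1\cdots a_s - b_1\cdots b_s = \sum_i a_1\cdots a_{i-1}(a_i-b_i)b_{i+1}\cdots b_s$ specialized to the indices of $\setu$. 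Substituting $u_i - v_i = \rho_i$ from (a) turns the $i$-th summand into $\rho_i$ multiplied by a partial product of the $u_j$ (for $j<i$) and of the $v_j$ (for $j>i$).

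Next I would pass to absolute values. From (b) we have $|u_j| \le \bar u_j \le \bar u_j + |\rho_j|$, and combining (a) with (b) gives $|v_j| = |u_j - \rho_j| \le \bar u_j + |\rho_j|$ as well. Applying these two bounds to every factor of each telescoping term yields
$$
\left| \prod_{j \in \setu} u_j - \prod_{j \in \setu} v_j \right| \le \sum_{i \in \setu} |\rho_i| \prod_{\substack{j \in \setu \\ j \ne i}} (\bar u_j + |\rho_j|).
$$

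The decisive step, and the only place where (c) is used, is to enlarge the product over $\setu\setminus\{i\}$ to the product over all of $\setu$. Since $\bar u_j + |\rho_j| \ge \bar u_j \ge 1$, reinserting the omitted factor $(\bar u_i + |\rho_i|) \ge 1$ can only increase the product, so $\prod_{j\in\setu,\,j\ne i}(\bar u_j + |\rho_j|) \le \prod_{j\in\setu}(\bar u_j + |\rho_j|)$ uniformly in $i$. Pulling this common bound out of the sum gives
$$
\left| \prod_{j \in \setu} u_j - \prod_{j \in \setu} v_j \right| \le \left(\prod_{j \in \setu} (\bar u_j + |\rho_j|)\right) \sum_{i \in \setu} |\rho_i|.
$$
Finally I would define $\theta_\setu$ as the ratio of the signed difference $\prod u_j - \prod v_j$ to the right-hand bound whenever that bound is nonzero, and set $\theta_\setu = 0$ otherwise (in which case $\sum|\rho_j|=0$ forces all $\rho_j=0$ and hence a vanishing difference, so any choice works). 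The displayed inequality guarantees $|\theta_\setu| \le 1$, which is exactly the claimed representation. I do not expect any genuine obstacle: the lemma is elementary, and the only point requiring care is recognizing that condition (c) is precisely what permits replacing each length-$(|\setu|-1)$ partial product by the full product $\prod_{j\in\setu}(\bar u_j + |\rho_j|)$ without bookkeeping the omitted index.
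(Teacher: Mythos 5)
Your proof is correct: the telescoping identity is valid, the bounds $|u_j| \le \bar u_j + |\rho_j|$ and $|v_j| = |u_j - \rho_j| \le \bar u_j + |\rho_j|$ follow from (a) and (b), condition (c) is invoked exactly where it is needed (to reinsert the omitted factor $\bar u_i + |\rho_i| \ge 1$), and the degenerate case in the definition of $\theta_{\setu}$ is handled properly. Note that the paper itself gives no proof of this lemma---it is quoted from \cite{EKNO2020}---and your telescoping product-difference argument is the standard elementary proof of this type of estimate, so your proposal serves as a complete self-contained substitute for the cited one.
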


Furthermore, we recall the character property of Walsh functions for polynomial lattice rules with prime base $b$. Let $P(\bsg,p)=\{\bsx_0,\ldots,\bsx_{b^m-1}\}$ be a polynomial lattice with generating vector $\bsg \in (\F_b[x])^d$ and modulus $p \in \F_b[x]$ with $\deg(p)=m$. Then, for any integer vector $\bsk \in \N_0^d$ the following identity holds,
\begin{equation} \label{eq:char-prop}
	\frac{1}{b^m} \sum_{n=0}^{b^m-1} \wal_{\bsk} (\bsx_n)
	=
	\delta_p(\tr_m(\bsk) \cdot \bsg)
	=
	\begin{cases}
		1, & \text{if } \tr_m(\bsk) \cdot \bsg \equiv 0 \tpmod{p} , \\
		0, & \text{otherwise} .
	\end{cases}
\end{equation}
We remark that an analogous result to \eqref{eq:char-prop} also holds if we only consider projections of the polynomial lattice and the generating vector onto 
a non-empty subset of $\{1 \mcol d\}$, as also the projection of a polynomial lattice is a polynomial lattice that is generated by the corresponding projection of the generating vector.

We now state an auxiliary result that will be useful at several instances in this paper. 
\begin{lemma}\label{lem:projections_PLR}
	Let $P(\bsg,p)$ be a polynomial lattice with modulus $p \in \F_b[x]$ with $\deg(p)=m$ and generating vector 
	$\bsg=(g_1,\ldots,g_d) \in (\F_b[x])^d$ such that $\gcd(g_j,p)=1$ for $1 \le j \le d$. Then each one-dimensional projection of $P(\bsg,p)$ is the full grid 
	\begin{equation*}
		\left\{0,\frac{1}{b^m}, \ldots, \frac{b^m-1}{b^m}\right\} ,
	\end{equation*}
	and in particular the projection of the point with index $0$ is always $0$. 
\end{lemma}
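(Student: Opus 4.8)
The plan is to fix a coordinate $j \in \{1\mcol d\}$ and prove that the one-dimensional projection map
\[
	\phi_j\colon n \longmapsto v_m\!\left(\frac{n(x)\,g_j(x)}{p(x)}\right)
\]
is a bijection from the set $\{n \in \F_b[x] : \deg(n) < m\}$ (which has exactly $b^m$ elements) onto the grid $\{0, 1/b^m, \ldots, (b^m-1)/b^m\}$. Since both sets have cardinality $b^m$, it suffices to establish injectivity. The central observation I would use is that $v_m$ factors as $v_m = \iota \circ \pi_m$, where $\pi_m$ extracts the coefficients of $x^{-1},\ldots,x^{-m}$ of a formal Laurent series (an $\F_b$-linear map into $\F_b^m$) and $\iota(t_1,\ldots,t_m) = \sum_{\ell=1}^m t_\ell b^{-\ell}$ is a bijection from $\F_b^m$ onto the grid. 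Thus the bijectivity of $\phi_j$ reduces to showing that the $\F_b$-linear map $n \mapsto \pi_m(n g_j/p)$, an endomorphism of an $m$-dimensional $\F_b$-space, has trivial kernel.

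Second, I would compute this kernel via polynomial division. Suppose $\pi_m(n g_j / p) = \overline{\bszero}$ and write $n g_j = q\,p + s$ with $\deg(s) < m$; then $n g_j/p = q + s/p$, and since $q$ contributes only non-negative powers of $x$, the vanishing of $\pi_m$ says that the Laurent expansion of $s/p$ has no terms $x^{-1},\ldots,x^{-m}$, i.e.\ $s/p = O(x^{-(m+1)})$. Because $\deg(p) = m$, multiplying back shows that $s = p\cdot(s/p)$ is a Laurent series supported only on strictly negative powers of $x$; but $s$ is a polynomial, so $s = 0$. Hence $p \mid n g_j$, and since $\gcd(g_j,p)=1$ we obtain $p \mid n$; as $\deg(n) < m = \deg(p)$, this forces $n = 0$. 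This yields injectivity, hence surjectivity onto the full grid, for each $j$. The index-$0$ claim is then immediate, since $n = 0$ gives $v_m(0) = 0$, so $\bsx_0$ projects to $0$ in every coordinate.

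The step I expect to be the main obstacle is the careful handling of $v_m$ in the injectivity argument: one must resist treating $v_m$ as additive on $[0,1)$ (it is \emph{not}, because addition of $b$-adic digits in $\R$ involves carries) and instead keep all the linear-algebra bookkeeping at the level of the digit space $\F_b^m$, where $\pi_m$ genuinely is $\F_b$-linear. Once the statement is rephrased as triviality of the kernel of an $\F_b$-linear endomorphism, the degree-and-coprimality argument above is routine.
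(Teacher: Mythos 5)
Your proposal is correct, but it takes a genuinely different route from the paper in the sense that the paper gives no argument of its own: its entire proof of this lemma is the citation of \RefDef{def:poly_lat} together with \cite[Remark 10.3]{DP10}, so the bijectivity of the one-dimensional projections is outsourced to the standard reference. What you have written is, in effect, the argument behind that remark carried out in full: you factor $v_m$ through the digit space $\F_b^m$, observe that $n \mapsto \pi_m(n\,g_j/p)$ is an $\F_b$-linear map between $m$-dimensional $\F_b$-spaces, and compute its kernel by division with remainder, using $\deg(p)=m$ to force the remainder $s$ to vanish and $\gcd(g_j,p)=1$ to conclude $p \mid n$, hence $n=0$. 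The details are sound; in particular, the step where $s = p\cdot(s/p)$ would be supported on strictly negative powers of $x$ while being a polynomial is exactly the right way to rule out $s \ne 0$, and your closing caution is well placed: $v_m$ is not additive as a map into $[0,1)$ because of digit carries, so injectivity must indeed be verified at the level of $\F_b^m$, where the relevant maps genuinely are linear. What the paper's approach buys is brevity and reliance on a standard, citable fact; what yours buys is a self-contained proof in which the two hypotheses of the lemma are seen to be used exactly where they matter.
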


\begin{proof}
	The result follows from Definition \ref{def:poly_lat} and \cite[Remark 10.3]{DP10}.
\end{proof}

Additionally, we will need the following result.

\begin{lemma}\label{lem:sum_PLR}
	Let $P(\bsg,p)=\{\bsx_0,\ldots,\bsx_{b^m-1}\}$ be a polynomial lattice point set with modulus $p \in \F_b[x]$ with $\deg(p)=m$ and generating vector $\bsg \in (\F_b[x])^d$ such that $\gcd(g_j,p)=1$ for $1 \le j \le d$. Furthermore, let $m\ge 4$. For a point $\bsx_n$ with $n\in\{0,1,\ldots,b^m-1\}$, we denote its coordinates via 
	$\bsx_n = (x_{n,1},\ldots,x_{n,d})$. Then, for any $j \in \{1,\ldots,d\}$, it is true that
	\begin{equation*}
		\frac{1}{b^m} \sum_{n=1}^{b^m-1} \frac{1}{x_{n,j}}
		<
		1 + m \ln b 
		\le 
		m (b-1)
		.
	\end{equation*}
\end{lemma}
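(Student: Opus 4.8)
The plan is to reduce the sum to a harmonic number by exploiting the structure of the one-dimensional projections. Since $\gcd(g_j,p)=1$, \RefLem{lem:projections_PLR} tells us that the $j$-th coordinate projection of $P(\bsg,p)$ is the full grid $\{0,1/b^m,\ldots,(b^m-1)/b^m\}$ and that $x_{0,j}=0$. As this projection maps the $b^m$ indices \emph{onto} the $b^m$ grid points, it is a bijection; hence, as $n$ runs through $\{1,\ldots,b^m-1\}$, the coordinates $x_{n,j}$ run through the nonzero grid points $\ell/b^m$, $\ell=1,\ldots,b^m-1$, each exactly once. Reindexing the sum accordingly therefore gives
\[
	\frac{1}{b^m}\sum_{n=1}^{b^m-1}\frac{1}{x_{n,j}}
	=
	\frac{1}{b^m}\sum_{\ell=1}^{b^m-1}\frac{b^m}{\ell}
	=
	\sum_{\ell=1}^{b^m-1}\frac{1}{\ell} ,
\]
which is just the $(b^m-1)$-th harmonic number and is in particular independent of $j$.

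Next I would bound this harmonic sum by the standard integral comparison. From $\sum_{\ell=2}^{n}1/\ell \le \int_1^n \rd x/x=\ln n$ we obtain $\sum_{\ell=1}^n 1/\ell \le 1+\ln n$. Taking $n=b^m-1$ and using the strict bound $\ln(b^m-1)<\ln(b^m)=m\ln b$ then yields
\[
	\frac{1}{b^m}\sum_{n=1}^{b^m-1}\frac{1}{x_{n,j}}
	=
	\sum_{\ell=1}^{b^m-1}\frac{1}{\ell}
	\le
	1+\ln(b^m-1)
	<
	1+m\ln b ,
\]
which is the first claimed inequality. This part is entirely routine.

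Finally I would establish the second inequality $1+m\ln b\le m(b-1)$, equivalently $1\le m\big((b-1)-\ln b\big)$. The map $b\mapsto (b-1)-\ln b$ is positive and increasing for $b\ge 2$, so over the admissible bases its minimum is attained at $b=2$, where it equals $1-\ln 2>0.306$. This base-$2$ case is exactly the delicate point of the whole argument, and it is precisely where the hypothesis $m\ge 4$ is needed: there $m\big((b-1)-\ln b\big)\ge 4(1-\ln 2)>1$, whereas $m=3$ would already fail. For every $b\ge 3$ one instead has $(b-1)-\ln b\ge 2-\ln 3>\tfrac12$, so the inequality holds for all $m\ge 2$ and hence a fortiori for $m\ge 4$. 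Thus the only step requiring genuine care is this numerical check in the worst case $b=2$, and the assumption $m\ge 4$ is exactly what makes it go through.
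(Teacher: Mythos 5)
Your proof is correct and follows essentially the same route as the paper's: reduce the sum to the harmonic number $\sum_{\ell=1}^{b^m-1} 1/\ell$ via the bijectivity of the one-dimensional projection from \RefLem{lem:projections_PLR}, bound it by integral comparison to get $1+\ln(b^m-1) < 1+m\ln b$, and finish with the elementary inequality $1+m\ln b \le m(b-1)$. The only difference is that you verify this last inequality explicitly, identifying $b=2$ as the binding case where $m\ge 4$ is genuinely needed (since $3(1-\ln 2)<1$), whereas the paper simply asserts that it follows from the assumption $m\ge 4$; this added detail is a sound refinement rather than a different approach.
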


\begin{proof}
	We recall that the point set $P(\bsg,p)$ is defined as the collection of the $b^m$ points of the form
	\begin{equation*}
		\bsx_n
		=
		\left(v_m \left( \frac{n(x)\, g_1(x)}{p(x)} \right),\ldots, v_m\left( \frac{n(x)\, g_d(x)}{p(x)} \right)\right)
	\end{equation*}
	for $n \in \F_b[x]$ with $\deg(n) < m$. Due to Lemma \ref{lem:projections_PLR} we know that
	$\{x_{1,j},\ldots,x_{b^m-1,j}\}$ equals the set $\left\{\frac{1}{b^m}, \ldots, \frac{b^m-1}{b^m}\right\}$ for each $j\in\{1,\ldots,d\}$. 
	Thus we can estimate
	\begin{align*}
		\frac{1}{b^m} \sum_{n=1}^{b^m-1} \frac{1}{x_{n,j}}
		&=
		\frac{1}{b^m} \sum_{n=1}^{b^m-1} \frac{b^m}{n}
		=
		\sum_{n=1}^{b^m-1} \frac1n
		\le
		1 + \int_{1}^{b^m-1} \frac{1}{x} \rd x \\
		&
		=
		1 + \ln(b^m-1)
		<
		1 + \ln(b^m)
		=
		1 + m \ln b
		\le
		m (b-1)
		,
	\end{align*}
	which yields the claimed result, where the last estimate follows from the assumption $m\ge 4$. 
\end{proof}

\subsection{The CBC-DBD construction algorithm}\label{sec:algorithm}

We are now ready to study the component-by-component digit-by-digit (CBC-DBD) construction for polynomial lattice rules, see also \cite{EKNO2020}, where such an algorithm was analyzed for ordinary lattice rules. In particular, we will assume throughout this section that our modulus polynomial is of the form $p_m (x)=x^m$ for $m\in\N$. 

Concerning the weights, the algorithm can, as indicated in our main result (Theorem \ref{thm:main}), be run with respect to the weights $\bsgamma^{1/\alpha} = (\gamma_j^{1/\alpha})_{j \ge 1}$ to obtain a polynomial lattice rule that yields a low worst-case error in the Walsh space $W_{d,\bsgamma}^\alpha$, or, alternatively, with respect to the weights $\bsgamma$ to obtain good polynomial lattice rules in the space $W_{d,\bsgamma^{\alpha}}^\alpha$. In the latter case, the construction algorithm is independent of the smoothness parameter $\alpha$ and we obtain worst-case error bounds that hold for all $\alpha>1$ simultaneously.

In order to avoid confusion, we will therefore denote the weights in this section by $\bseta$ instead of $\bsgamma$ and outline the algorithm based on $\bseta$. 
In Theorem \ref{thm:main}, we will then choose $\bseta$ equal to $\bsgamma^{1/\alpha}$ or $\bsgamma$, respectively.
For technical reasons, it will be necessary to assume that the positive weights $\bseta$ are of product structure, that is,
\begin{equation*}
	\eta_{\setu}
	=
	\prod_{j \in \setu} \eta_j
\end{equation*}
for $\setu\subseteq \{1 \mcol d\}$, with a sequence of positive reals $(\eta_j)_{j\ge1}$. However, we point out that the following theorem, which 
is crucial for the proposed construction method, also holds for general weights $\bseta = (\eta_{\setu})_{\setu \subseteq \{1:d\}}$.

\begin{theorem} \label{thm:CBCDBD_split}
	Let $b$ be prime, let $m,d \in \N$ with $m\ge 4$, let $p_m(x)=x^m \in \F_b[x]$, and let $\bseta = (\eta_\setu)_{\setu\subseteq\{1:d\}}$ be positive weights with 
	$\eta_{\emptyset} = 1$. Furthermore, let $\bsg=(g_1,\ldots, g_d) \in (\F_b[x])^d$ with $\deg(g_j) < m$ and $\gcd(g_j,p_m)=1$ for $1\le j\le d$. Then, 
	\begin{align*}
		T_{\bseta} (\bsg,p_m)
		&\le
		\frac{1}{b^m} \, H_{d,m,\bseta}(\bsg) - \sum_{\emptyset\ne \setu \subseteq \{1:d\}} \eta_\setu
		+ \sum_{\emptyset\ne \setu \subseteq \{1:d\}} \frac{\eta_\setu}{b^m} ((b-1)m+1)^{|\setu|}
		\notag \\ 
		&\quad+ 
		\sum_{\emptyset\ne \setu \subseteq \{1:d\}} \frac{\eta_\setu}{b^m} \, (b \,m \abs{\setu}) \left((b-1)m + \frac{b}{b-1} \right)^{\abs{\setu}}
		,
	\end{align*}
	where we define the function $H_{d,m,\bseta}: (\F_b[x])^d \to \R$ as
	\begin{equation} \label{eq:def_H}
		H_{d,m,\bseta}(\bsg)
		:=
		\sum_{\emptyset\ne \setu \subseteq \{1:d\}}\eta_\setu (1-b)^{|\setu|}
		\sum_{n=1}^{b^m-1} \prod_{j\in\setu} \left( \floor{\log_b \left(v_m \left( \frac{n(x) \, g_j(x)}{x^m} \right) \right)} + 1 \right).
	\end{equation}
\end{theorem}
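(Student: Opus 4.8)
The plan is to expand $T_{\bseta}(\bsg,p_m)$ according to the support $\setu=\supp(\bsk)$ of the summation index. Writing $(r_{1,\bseta}(\bsk))^{-1}=\eta_{\setu}\prod_{j\in\setu}b^{-\floor{\log_b k_j}}$ and using the definition of $A_{p_m}(\bsg)$ one obtains
\[
T_{\bseta}(\bsg,p_m)=\sum_{\emptyset\ne\setu\subseteq\{1\mcol d\}}\eta_{\setu}\sum_{\bsk_{\setu}\in A^{\ast}_{p_m,\setu}(\bsg_{\setu})}\prod_{j\in\setu}b^{-\floor{\log_b k_j}}.
\]
Since membership in $A^{\ast}_{p_m,\setu}$ is encoded by $\delta_{p_m}(\tr_m(\bsk_{\setu})\cdot\bsg_{\setu})$, I would apply the character property \eqref{eq:char-prop} to the $\setu$-projection of the lattice and interchange the order of summation, turning each inner sum into $\tfrac{1}{b^m}\sum_{n=0}^{b^m-1}\prod_{j\in\setu}S(x_{n,j})$, where $x_{n,j}=v_m(n(x)g_j(x)/x^m)$ and $S(x):=\sum_{k=1}^{b^m-1}\wal_k(x)/r_1(k)$.

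Next I would isolate $n=0$. By Lemma~\ref{lem:projections_PLR} the point of index $0$ has all coordinates equal to $0$, and a direct count gives $S(0)=\sum_{k=1}^{b^m-1}b^{-\floor{\log_b k}}=m(b-1)$, contributing $\tfrac{1}{b^m}(m(b-1))^{|\setu|}$. For $n\ge1$ the same lemma guarantees $x_{n,j}\in(0,1)$, so Lemma~\ref{lem:trunc_walsh_series} gives $S(x_{n,j})=-(b-1)(\floor{\log_b x_{n,j}}+1)-1-\tau(x_{n,j})/(b^m x_{n,j})$. I would then invoke Lemma~\ref{lem:diff_prod} with $v_{j}=-(b-1)(\floor{\log_b x_{n,j}}+1)-1$, $\rho_j=-\tau(x_{n,j})/(b^m x_{n,j})$, and $\bar u_j=(b-1)m$; the bound $|S(x_{n,j})|\le(b-1)m$ needed here follows from $0\le-(b-1)(\floor{\log_b x_{n,j}}+1)\le(b-1)(m-1)$ together with $m\ge4$. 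This peels the ``main product'' $\prod_{j\in\setu}(-(b-1)(\floor{\log_b x_{n,j}}+1)-1)$ off a remainder. Using $|\rho_j|<b/((b-1)b^m x_{n,j})\le b/(b-1)$ to bound $\prod_{j\in\setu}(\bar u_j+|\rho_j|)\le((b-1)m+\tfrac{b}{b-1})^{|\setu|}$, and summing $\sum_{j\in\setu}|\rho_j|$ over $n$ via Lemma~\ref{lem:sum_PLR} (which yields $b\,m\,|\setu|$), produces exactly the last term $\sum_{\setu}\tfrac{\eta_\setu}{b^m}(b m|\setu|)((b-1)m+\tfrac{b}{b-1})^{|\setu|}$.

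It remains to relate the summed main product to $H_{d,m,\bseta}(\bsg)$. Writing $c_{n,j}:=-(b-1)(\floor{\log_b x_{n,j}}+1)$, note that $H$ is assembled from $\prod_{j\in\setu}c_{n,j}$ (the factor $(1-b)^{|\setu|}$ absorbing the $-(b-1)$'s). I would expand the shifted product through $\prod_{j\in\setu}(c_{n,j}-1)=\sum_{\setw\subseteq\setu}(-1)^{|\setu|-|\setw|}\prod_{j\in\setw}c_{n,j}$ and sum over $n$: the term $\setw=\setu$ reproduces $\tfrac{1}{b^m}H$, while the lower symmetric sums $\Sigma_{\setw}:=\sum_{n=1}^{b^m-1}\prod_{j\in\setw}c_{n,j}$ remain. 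The decisive point is that each $\Sigma_{\setw}$ equals $b^m$ up to a polynomial-in-$m$ error, so that the alternating combination collapses at the $b^m$-scale, $(\sum_{t=0}^{|\setu|-1}\binom{|\setu|}{t}(-1)^{|\setu|-t})\,b^m=-b^m$; this is precisely what produces the term $-\sum_{\setu}\eta_{\setu}$, and the residual polynomial-in-$m$ pieces, collected with the $n=0$ contribution, are then absorbed into $\sum_{\setu}\tfrac{\eta_\setu}{b^m}((b-1)m+1)^{|\setu|}$.

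The main obstacle is exactly this cancellation. A crude estimate of the $\Sigma_{\setw}$ loses a factor $b^m$ and is therefore useless; one genuinely needs that every $\Sigma_{\setw}$ has leading term \emph{exactly} $b^m$, i.e.\ that the $\setw$-dimensional projections of the polynomial lattice equidistribute the weight $\prod_{j}c_{n,j}$ against its average. I would establish this by re-expressing $c_{n,j}=1+S(x_{n,j})+\tau(x_{n,j})/(b^m x_{n,j})$ and applying the character property once more, using $\sum_{n=1}^{b^m-1}S(x_{n,j})=-m(b-1)$ (which follows since $\gcd(g_j,x^m)=1$ forces the single-coordinate dual sum to vanish) and Lemma~\ref{lem:sum_PLR} to control the cross terms. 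As a consistency check, the cases $|\setu|=1$ and $|\setu|=2$ can be evaluated in closed form and render the claimed inequality an equality; this both pins down the constants $((b-1)m+1)^{|\setu|}$ and $-\eta_{\setu}$ and signals that the $b^m$-scale bookkeeping must be carried out exactly rather than by estimation.
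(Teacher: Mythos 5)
Your first half is sound and in fact runs parallel to the paper's own argument: the character property \eqref{eq:char-prop} turns $T_{\bseta}$ into an average over the lattice points, Lemma~\ref{lem:trunc_walsh_series} trades the truncated Walsh series $S(x_{n,j})$ for the logarithmic expression, and Lemma~\ref{lem:diff_prod} combined with Lemma~\ref{lem:sum_PLR} controls the $\tau(x_{n,j})/(b^m x_{n,j})$ remainders, producing exactly the last error term of the theorem (incidentally, the bound $|S(x_{n,j})|\le (b-1)m$ you need for Lemma~\ref{lem:diff_prod} follows more directly from the triangle inequality, since $\sum_{k=1}^{b^m-1}(r_1(k))^{-1}=(b-1)m$). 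The gap is your ``decisive point''. The claim that every $\Sigma_{\setw}=\sum_{n=1}^{b^m-1}\prod_{j\in\setw}c_{n,j}$ equals $b^m$ up to a polynomial-in-$m$ error is false for $|\setw|\ge 2$ under the theorem's hypotheses, which only require $\gcd(g_j,x^m)=1$ and therefore admit, e.g., $g_1=\dots=g_d=1$. In that case all coordinates of every point coincide, and for $\setw=\{i,j\}$ one computes
\begin{equation*}
	\Sigma_{\setw}
	=
	(b-1)^3 b^{m-1}\sum_{s=0}^{m-1} s^2 b^{-s}
	=
	(b+1)\,b^m\,(1+o(1)),
\end{equation*}
so the deviation from $b^m$ is itself of order $b^m$. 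Structurally this cannot be fixed by the tools you invoke: for $|\setv|\ge 2$ the cross terms satisfy $\sum_{n=0}^{b^m-1}\prod_{j\in\setv}S(x_{n,j})=b^m D_{\setv}$ with $D_{\setv}=\sum_{\bsk_\setv\in A^\ast_{p_m,\setv}(\bsg_\setv)}\prod_{j\in\setv}(r_1(k_j))^{-1}$, i.e.\ they are $b^m$ times dual-lattice sums of precisely the kind the theorem is trying to bound — nonnegative, but in no way small — while Lemma~\ref{lem:sum_PLR} only controls $\sum_n 1/x_{n,j}$ and never sees these multi-dimensional character sums. Your consistency check for $|\setu|\le 2$ is correct but misleading: two-dimensional $\Sigma_{\setw}$ first enter the inclusion--exclusion at $|\setu|=3$, which is exactly where the argument breaks.

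The gap is repairable, but only by exploiting signs rather than magnitudes: from $c_{n,j}=1+S(x_{n,j})+\tau(x_{n,j})/(b^m x_{n,j})$ one gets
\begin{equation*}
	\prod_{j\in\setu}c_{n,j}-\prod_{j\in\setu}(c_{n,j}-1)
	=
	\sum_{\setv\subsetneq\setu}\ \prod_{j\in\setv}\Bigl(S(x_{n,j})+\tfrac{\tau(x_{n,j})}{b^m x_{n,j}}\Bigr),
\end{equation*}
and after summation over $n$ the dangerous pieces appear as $+b^m D_{\setv}\ge 0$, i.e.\ with favorable sign, while the remaining pure-$S$ pieces sum to exactly $((b-1)m+1)^{|\setu|}-((b-1)m)^{|\setu|}$; however, the additional $\tau$-cross-term errors from these sub-products survive, so even this salvage yields a weaker constant than the bound you are asked to prove. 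The paper sidesteps the entire issue by taking its upper bound one step earlier, at the level of index sets: it enlarges the exact sum over $\bsk_\setu\in\{1,\dots,b^m-1\}^{|\setu|}$ to the sum over $\bszero\ne\bsk_\setu\in\{0,\dots,b^m-1\}^{|\setu|}$ (all added terms are nonnegative), after which the character sum factorizes as $\prod_{j\in\setu}\bigl(1+S(x_{n,j})\bigr)-1$. A single application of Lemma~\ref{lem:diff_prod} then produces $H_{d,m,\bseta}$ directly, the subtracted $1$'s give $-\sum_{\setu}\eta_\setu$ exactly, and no inclusion--exclusion over subsets — hence no mixed sum $\Sigma_{\setw}$ — ever appears.
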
 

\begin{proof}
	We use the character property of Walsh functions in \eqref{eq:char-prop} to rewrite $T_{\bseta}(\bsg,p_m)$ with the help of the 
	identity in Lemma \ref{lem:trunc_walsh_series}. First, we recall that for $k \in \N_0$ we have
	\begin{equation*}
		r_1(k) =r_1(b,k)
		= 
		\left\{\begin{array}{cc}
		1, & {\text{for }} k = 0, \\
				b^{\floor{\log_b(k)}}, & {\text{for }} k \ne 0 .
		\end{array}\right.
	\end{equation*}
	Using this definition, we obtain that
	\begin{align} \label{eq:estimate_T_2}
		T_{\bseta} (\bsg,p_m)
		&=
		\!\! \sum_{\emptyset\ne \setu \subseteq \{1:d\}} \!\! \eta_\setu \sum_{\bsk_\setu \in \{1,\dots,b^m-1 \}^{|\setu|}}
		\frac{\delta_{p_m}(\tr_m(\bsk_\setu) \cdot \bsg_\setu)}{\prod_{j\in\setu} r_1(k_j)}
		\notag \\
		&\le
		\!\! \sum_{\emptyset\ne \setu \subseteq \{1:d\}} \!\! \eta_\setu \sum_{\bszero \ne \bsk_\setu \in \{0,\dots,b^m-1 \}^{|\setu|}}
		\frac{\delta_{p_m}(\tr_m(\bsk_\setu) \cdot \bsg_\setu)}{\prod_{j\in\setu} r_1(k_j)}
		\notag \\
		&=
		\!\! \sum_{\emptyset\ne \setu \subseteq \{1:d\}} \frac{\eta_\setu}{b^m} \sum_{n=0}^{b^m-1}
		\left[ \sum_{\bsk_\setu \in \{0,1,\dots,b^m-1\}^{|\setu|}} \frac{\wal_{\bsk_\setu}(\bsx_{n,\setu})}{\prod_{j\in\setu} r_1(k_j)} - 1 \right]
		\notag \\
		&=
		\!\! \sum_{\emptyset\ne \setu \subseteq \{1:d\}} \!\! \frac{\eta_\setu}{b^m}
		\left[ \sum_{\bsk_\setu \in \{0,\dots,b^m-1\}^{|\setu|}} \frac{1}{\prod_{j\in\setu} r_1(k_j)} 
		+ \sum_{n=1}^{b^m-1} \prod_{j \in \setu} \left(1 + \sum_{k=1}^{b^m-1} \frac{\wal_k(x_{n,j})}{b^{\floor{\log_b(k)}}} \right) \right]
		\! - \!\!\!\!\sum_{\emptyset\ne \setu \subseteq \{1:d\}} \!\!\!\!\!\eta_\setu
		\notag \\ 
		&=
		\!\! \sum_{\emptyset\ne \setu \subseteq \{1:d\}} \frac{\eta_\setu}{b^m}
		\left[ \sum_{\bsk_\setu \in \{0,\dots,b^m-1\}^{|\setu|}} \frac{1}{\prod_{j\in\setu} r_1(k_j)} 
		+ \sum_{n=1}^{b^m-1} \left[ \prod_{j\in\setu} v_j(n) - \prod_{j\in\setu} u_j(n) + \prod_{j\in\setu} u_j(n) \right] \right]
		\notag \\
		&\quad- \sum_{\emptyset\ne \setu \subseteq \{1:d\}} \eta_\setu
		\notag \\ 
		\begin{split}
			&= 
			\sum_{\emptyset\ne \setu \subseteq \{1:d\}} \frac{\eta_\setu}{b^m} 
			\sum_{\bsk_\setu \in \{0,1,\dots,b^m-1 \}^{|\setu|}} \frac{1}{\prod_{j\in\setu} r_1(k_j) } 
			+ \sum_{\emptyset\ne \setu \subseteq \{1:d\}} \frac{\eta_\setu}{b^m} \sum_{n=1}^{b^m-1} \prod_{j\in\setu} u_j(n)
			\\
			&\quad-
			\sum_{\emptyset\ne \setu \subseteq \{1:d\}} \frac{\eta_\setu}{b^m} 
			\sum_{n=1}^{b^m-1} \theta_{\setu}(n) \left( \prod_{j\in\setu} (\bar{u}_j + |\rho_j(n)|) \right) \sum_{j\in\setu} |\rho_j(n)|
			- \sum_{\emptyset\ne \setu \subseteq \{1:d\}} \eta_\setu ,
		\end{split}
	\end{align}
	where we used Lemma \ref{lem:diff_prod} with
	\begin{align*}
		u_j = u_j(n) &:= -(b-1) (\floor{\log_b(x_{n,j})} + 1) ,
		&
		\bar{u}_j=\bar{u}_j(n) &:= (b-1)m ,
		\\
		v_j = v_j(n) &:= 1 + \sum_{k=1}^{b^m-1} \frac{\wal_k(x_{n,j})}{b^{\floor{\log_b(k)}}} ,
		&
		\rho_j = \rho_j(n) &:= \frac{\tau_j(n)}{x_{n,j} \, b^m}
		,
	\end{align*}
	and all $|\theta_\setu(n)| \le 1$ and $|\tau_j(n)| < \frac{b}{b-1}$. Due to Lemma \ref{lem:trunc_walsh_series}, Condition~(a) of Lemma \ref{lem:diff_prod} is fulfilled.
	Furthermore, we see that for $p_m(x)=x^m$ we have for each $j \in \{1\mcol d\}$ that
	\begin{equation*}
		x_{n,j} 
		=
		v_m\left(\frac{n(x) g_j(x)}{p_{m}(x)}\right)
		\ge
		v_m\left(\frac{1}{x^m}\right)
		=
		b^{-m}
	\end{equation*}
	for every $1 \le n < b^m$, and so
	\begin{equation*}
		\abs{u_j(n)}
		\le 
		-(b-1) (\floor{\log_b(b^{-m})} + 1)
		=
		-(b-1) (-m + 1)
		<
		(b-1) m 
		=
		\bar{u}_j 
	\end{equation*}
	with $\bar{u}_j \ge 1$ such that also Conditions~(b) and~(c) of Lemma \ref{lem:diff_prod} are satisfied.

	By simple calculations, the first sum in \eqref{eq:estimate_T_2} can be shown to equal
	\begin{align*}
		& \sum_{\emptyset\ne \setu \subseteq \{1:d\}} \frac{\eta_\setu}{b^m} 
		\sum_{\bsk_\setu \in \{0,1,\dots,b^m-1 \}^{|\setu|}} \frac{1}{\prod_{j\in\setu} r_1 (k_j)}
		= 
		\frac{1}{b^m}  \sum_{\emptyset\ne \setu \subseteq \{1:d\}} \eta_\setu 
		\left(1+\sum\limits_{k=1}^{b^{m}-1}\frac{1}{ b^{\floor{\log_b(k)}}} \right)^{|\setu|}	
		 \\
		&=
		 \frac{1}{b^m}  \sum_{\emptyset\ne \setu \subseteq \{1:d\}} \eta_\setu 
		 \left(	1+\sum\limits_{t=0}^{m-1}\sum\limits_{k=b^t}^{b^{t+1}-1}\frac{1}{ b^{\floor{\log_b(k)}}} \right)^{|\setu|}
		=
		\frac{1}{b^m} \sum_{\emptyset\ne \setu \subseteq \{1:d\}} \eta_\setu ((b-1)m+1)^{|\setu|}
		,
	\end{align*}
	while the third sum in \eqref{eq:estimate_T_2} can be bounded by 
	\begin{align*}
		&-\sum_{\emptyset\ne \setu \subseteq \{1:d\}} \frac{\eta_\setu}{b^m} 
		\sum_{n=1}^{b^m-1} \theta_{\setu}(n) 
		\left( \prod_{j\in\setu} (\bar{u}_j + |\rho_j(n)|) \right) \sum_{j\in\setu} |\rho_j(n)|
 		\\
		&\quad=
		-\sum_{\emptyset\ne \setu \subseteq \{1:d\}} \frac{\eta_\setu}{b^m}
		\sum_{n=1}^{b^m-1} \theta_{\setu}(n) \left( \prod_{j\in\setu} \left((b-1)m + \frac{|\tau_j(n)|}{x_{n,j} \, b^m}\right) \right) 
		\sum_{j\in\setu} \frac{|\tau_j(n)|}{x_{n,j} \, b^m}
		\notag \\
		&\quad\le
		\sum_{\emptyset\ne \setu \subseteq \{1:d\}} \frac{\eta_\setu}{b^m}
		\sum_{n=1}^{b^m-1} |\theta_{\setu}(n) | \left( \prod_{j\in\setu} \left((b-1)m + \frac{b}{b-1} \right) \right) 
		\sum_{j\in\setu} \frac{|\tau_j(n)|}{x_{n,j} \, b^m}
		\notag \\
		&\quad\le
		\sum_{\emptyset\ne \setu \subseteq \{1:d\}} \frac{\eta_\setu}{b^m}
		\left( \prod_{j\in\setu} \left((b-1)m + \frac{b}{b-1} \right) \right) 
		\sum_{j\in\setu} \sum_{n=1}^{b^m-1} \frac{b}{(b-1) b^m} \frac{1}{x_{n,j}}
		\notag \\
		&\quad\le	
		\sum_{\emptyset\ne \setu \subseteq \{1:d\}} \frac{\eta_\setu}{b^m}
		\left( \prod_{j\in\setu} \left((b-1)m + \frac{b}{b-1} \right) \right) \frac{b}{b-1}
		\sum_{j\in\setu} m (b-1)
		\notag \\
		&\quad=
		\frac{1}{b^m} \sum_{\emptyset\ne \setu \subseteq \{1:d\}} \eta_\setu \, (b \,m \abs{\setu})
		\left((b-1)m + \frac{b}{b-1} \right)^{\abs{\setu}}
		,
	\end{align*}
	where we used Lemma \ref{lem:sum_PLR} and the fact that $x_{n,j} \ge b^{-m}$ for each $j$ and all $1 \le n < b^m$.
	Combining these results with \eqref{eq:estimate_T_2} yields the claimed result.
\end{proof}

Theorem \ref{thm:CBCDBD_split} implies that it essentially suffices to find a generating vector $\bsg \in (\F_b[x])^d$ such that $H_{d,m,\bseta}(\bsg)$ is small, 
which then implies that also a good bound on $T_{\bseta}(\bsg,p_m)$ holds. We will therefore consider the quantity $H_{d,m,\bseta}$ as a search criterion 
for good generating vectors. 

At first, we prove the following result which will be needed in the further analysis and remind the reader that by $p_m$ we denote the polynomial $p_m\in\F_b[x]$ with $p_m(x)=x^m$ for $m\in\N$. 

\begin{lemma} \label{lem:bit-average}
	Let a prime $b$, an integer $t \ge 2$, and polynomials $\ell, q \in \F_b[x]$ with $\gcd(\ell,p_1)=\gcd(q,p_1)=1$ be given. Then the following identity holds:
	\begin{equation*}
		\sum_{g \in \F_b} \left( \floor{\log_b\left( v_t\left( \frac{\ell (x)\,(q(x) + x^{t-1} g)}{x^t} \right) \right)} + 1 \right)
		=
		\floor{\log_b\left( v_{t-1}\left( \frac{\ell(x) \, q(x)}{x^{t-1}} \right) \right)}
		.
	\end{equation*} 
\end{lemma}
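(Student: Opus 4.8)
The plan is to translate both sides into statements about $b$-ary digits and then reduce the claim to a single bijection over $\F_b$. Set $h := \ell q \in \F_b[x]$ and write its coefficients as $h(x)=\sum_k h_k x^k$. Since $\gcd(\ell,p_1)=\gcd(q,p_1)=1$, neither $\ell$ nor $q$ is divisible by $x$, so their constant terms $\ell_0,q_0$ are nonzero in $\F_b$, and hence so is $h_0=\ell_0 q_0$. The first step is the elementary expansion
\[
	\frac{\ell(x)\,(q(x)+x^{t-1}g)}{x^t}
	=
	\frac{h(x)}{x^t}+\frac{\ell(x)\,g}{x},
\]
together with the observation that all arithmetic is performed in $\F_b[x]((x^{-1}))$ \emph{before} $v_t$ is applied, so that $v_t$ merely reads off the coefficients of $x^{-1},\dots,x^{-t}$ as $b$-ary digits with no carries. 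In the series $\ell g/x$ the only negative power is $x^{-1}$, with coefficient $g\ell_0$; thus adding it alters nothing but the leading ($b^{-1}$) digit.

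Next I would record the resulting shift relation explicitly. Reading off the digits of $h/x^t$ in positions $1,\dots,t$ gives $h_{t-1},h_{t-2},\dots,h_0$, while $w:=v_{t-1}(h/x^{t-1})$ has digits $h_{t-2},\dots,h_0$ in positions $1,\dots,t-1$; these are exactly the positions $2,\dots,t$ of $h/x^t$. Consequently, for every $g\in\F_b$,
\[
	v_t\!\left(\frac{\ell\,(q+x^{t-1}g)}{x^t}\right)
	=
	\frac{d_g}{b}+\frac{w}{b}
	=
	\frac{d_g+w}{b},
\]
where $d_g\in\{0,1,\dots,b-1\}$ denotes the residue of $h_{t-1}+g\ell_0$ modulo $b$. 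Since $h_0\neq 0$ is the last digit of $w$, we have $0<w<1$, so $\log_b w$ is well defined.

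The key point is that $\ell_0\neq 0$ makes multiplication $g\mapsto g\ell_0$ a bijection of the field $\F_b$, so the affine map $g\mapsto d_g$ runs through $\{0,1,\dots,b-1\}$ exactly once as $g$ ranges over $\F_b$. I would therefore reindex the left-hand sum by $d:=d_g$ and simplify each summand using $\floor{\log_b((d+w)/b)}+1=\floor{\log_b(d+w)}$. For $d\ge 1$ one has $d+w\in[1,b)$, hence $\floor{\log_b(d+w)}=0$; for $d=0$ one has $d+w=w\in(0,1)$, hence $\floor{\log_b(d+w)}=\floor{\log_b w}$. Summing over $d$, only the $d=0$ term survives, and the total equals $\floor{\log_b w}=\floor{\log_b(v_{t-1}(\ell q/x^{t-1}))}$, which is precisely the right-hand side.

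The only delicate aspect is the digit bookkeeping for $v_t$ and $v_{t-1}$ and the claim that the perturbation by $x^{t-1}g$ touches only the first digit; both rest on performing the polynomial arithmetic over $\F_b$ first, so that digits combine modulo $b$ with no carry propagation. The two coprimality hypotheses each play a distinct role here: $\ell_0\neq 0$ supplies the bijection $g\mapsto d_g$, while $h_0=\ell_0 q_0\neq 0$ guarantees $w>0$, so that $\log_b w$ (and hence the right-hand side) is meaningful.
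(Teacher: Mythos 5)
Your proof is correct and follows essentially the same route as the paper's: both expand $\ell(x)(q(x)+x^{t-1}g)/x^t$ to see that $g$ perturbs only the coefficient of $x^{-1}$ (by $\ell_0 g$), use that $g \mapsto h_{t-1}+\ell_0 g$ is a bijection of $\F_b$ so exactly one $g$ annihilates the leading digit, and then observe that the nonzero-digit cases each contribute $-1+1=0$ while the zero-digit case yields $\floor{\log_b(v_{t-1}(\ell q/x^{t-1}))}$ via the factor-of-$b$ shift. Your reindexing of the sum by $d_g$ and the compact identity $v_t = (d_g+w)/b$ is a slightly tidier way of organizing the same case analysis the paper performs.
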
  

\begin{proof}
	Assume that the product of the polynomials $\ell$ and $q$ is given by
	\begin{equation*}
		\ell (x) \, q (x)
		=
		\sum_{i=0}^r a_i x^i 
		\quad \text{with} \quad
		a_0,a_r \ne 0
		.
	\end{equation*}
	Let, furthermore,
	\begin{equation*}
		\ell (x)=\sum_{k=0}^v \ell_k x^k
		,
	\end{equation*}
	where we note that $v\le r$. 
	Hence, we obtain that for $g \in \F_b$
	\begin{equation*}
		\frac{\ell(x) \,(q (x) + x^{t-1} g)}{x^t}
		=
		\sum_{i=t}^r a_i x^{i-t} + \sum_{k=1}^v \ell_k g x^{k-1} +  (a_{t-1} + \ell_0 g) x^{-1} + \sum_{i=0}^{t-2} a_i x^{i-t}
	\end{equation*}
	and thus we have that if $a_{t-1} + \ell_0 g \not\equiv 0 \pmod{b}$, then
	\begin{equation*}
		\floor{\log_b\left( v_t\left( \frac{\ell(x) \,(q (x) + x^{t-1} g)}{x^t} \right) \right)} + 1
		=
		\floor{\log_b\left( \frac{a_{t-1} + \ell_0 g}{b} + \sum_{i=0}^{t-2} a_i b^{i-t} \right)} + 1
		=
		- 1 + 1
		=
		0
		. 
	\end{equation*}
	Otherwise, if $a_{t-1} + \ell_0 g \equiv 0 \pmod{b}$, then
	\begin{align*}
		v_t \left( \frac{\ell (x) \,(q (x) + x^{t-1} g)}{x^t} \right)
		&=
		\sum_{i=0}^{t-2} a_i b^{i-t}
		=
		\sum_{i=2}^t a_{t-i} b^{-i}
		=
		\frac1b \left( \sum_{i=1}^{t-1} a_{t-i-1} b^{-i} \right)
		=
		\frac1b v_{t-1}\left( \frac{\ell (x) \, q (x)}{x^{t-1}} \right)
		,
	\end{align*}
	and therefore
	\begin{align*}
		\floor{\log_b\left( v_t \left( \frac{\ell (x) \,(q (x) + x^{t-1} g)}{x^t} \right) \right)} + 1
		&=
		\floor{\log_b\left( \frac1b v_{t-1}\left( \frac{\ell (x) \, q (x)}{x^{t-1}} \right) \right)} + 1\\
		&=
		\floor{\log_b\left( v_{t-1}\left( \frac{\ell (x) \, q (x)}{x^{t-1}} \right) \right)}
		.
	\end{align*}
	Observing that there exists exactly one $g \in \F_b$ for which $a_{t-1} + \ell_0 g \equiv 0 \pmod{b}$ and combining the two 
	cases considered, we immediately obtain the claimed identity.
\end{proof}

With the help of Lemma \ref{lem:bit-average} we can prove the following result which motivates the choice of our quality function for Algorithm \ref{alg:cbcdbd}.

\begin{lemma} \label{lem:H-average}
	For integers $m \in \N$ and $w \in \{1,\ldots,m\}$, let $b$ be prime, $g \in \F_b$, and $\bsg \in (\F_b[x])^d$ with $\gcd(g_j,p_1)=1$ for all $1 \le j \le d$, where 
	$g_d \in G_{b,w-1}$, and let $\bseta = (\eta_\setu)_{\setu\subseteq\{1:d\}}$ be positive weights with $\eta_{\emptyset} = 1$.
	Then the average of $H_{d,m,\bseta}$ with respect to the choices for extending the degree of $g_d + g \,p_{w-1}$ up to $m$ equals
	\begin{align} \label{eq:H-average}
		&\frac{1}{b^{m-w}} \sum_{\bar{g} \in G_{b,m-w}} H_{d,m,\bseta}(g_1,\ldots,g_{d-1},g_d + g \,p_{w-1} + \bar{g} \,p_w)
		\notag \\
		\begin{split}
		&\quad=
		\sum_{t=w}^{m} \sum_{\substack{\ell=1 \\ \ell \not\equiv 0 \tpmod b }}^{b^t-1}
		\eta_d (1-b) \, b^{w-t} \left( \floor{\log_b \left(v_w \left( \frac{\ell (x) \, (g_d (x) + g \,x^{w-1})}{x^w} \right) \right)} + 1 \right) \times
		\\
		&\qquad\times \prod_{j=1}^{d-1} \left(1 + \eta_j (1-b) \left( \floor{\log_b \left(v_t \left( \frac{\ell(x) \, g_j(x)}{x^t} \right) \right)} + 1 \right) \right)
		+ S_{m,w,\bseta}(\bsg) - (b^m-1)
		,
		\end{split}
	\end{align}
	where the term $S_{m,w,\bseta}(\bsg)$, which does not depend on $g$ and $\bar{g}$, is given by
	\begin{align*}
		S_{m,w,\bseta}(\bsg)
		&=
		\sum_{t=1}^{w-1} \sum_{\substack{\ell=1 \\ \ell \not\equiv 0 \tpmod b }}^{b^t-1} \prod_{j=1}^d \left(1 + \eta_j (1-b) 
		\left( \floor{\log_b \left(v_t \left( \frac{\ell(x) \, g_j(x)}{x^t} \right) \right)} + 1 \right) \right)
		\\
		&\quad+
		\sum_{t=w}^m \sum_{\substack{\ell=1 \\ \ell \not\equiv 0 \tpmod b }}^{b^t-1} 
		\prod_{j=1}^{d-1} \left(1 + \eta_j (1-b) \left(\floor{\log_b \left(v_t \left( \frac{\ell(x) \, g_j(x)}{x^t} \right) \right)} + 1 \right) \right) 
		\left( 1 + \eta_d (1-b^{w-t}) \right).
	\end{align*}
\end{lemma}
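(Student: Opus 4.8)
The plan is to prove \eqref{eq:H-average} in three movements: a reformulation of $H_{d,m,\bseta}$ as a single product over the components, a reindexing of the outer sum over $n$, and finally the averaging over $\bar g$ via Lemma~\ref{lem:bit-average}. Throughout, the hypotheses $\gcd(g_j,p_1)=1$ (i.e.\ $g_j(0)\ne 0$) guarantee that the arguments of all floor-logarithms are strictly positive, so every term is well defined.

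First I would exploit the product structure of the weights. Abbreviating $f_j(n):=\floor{\log_b\left(v_m\left(\frac{n(x)\,g_j(x)}{x^m}\right)\right)}+1$ and using $\eta_\setu=\prod_{j\in\setu}\eta_j$ together with the elementary identity $\sum_{\setu\subseteq\{1:d\}}\prod_{j\in\setu}a_j=\prod_{j=1}^d(1+a_j)$, definition \eqref{eq:def_H} collapses to
\[
  H_{d,m,\bseta}(\bsg)
  = \sum_{n=1}^{b^m-1}\left(\prod_{j=1}^d\left(1+\eta_j(1-b)f_j(n)\right)-1\right)
  = \sum_{n=1}^{b^m-1}\prod_{j=1}^d\left(1+\eta_j(1-b)f_j(n)\right)-(b^m-1),
\]
which already accounts for the trailing constant $-(b^m-1)$ in \eqref{eq:H-average}. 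The crucial step is then a reindexing of the outer sum: every $n\in\{1,\ldots,b^m-1\}$ factors uniquely as $n=b^{m-t}\ell$ with $t\in\{1,\ldots,m\}$ and $\ell\in\{1,\ldots,b^t-1\}$, $\ell\not\equiv 0\tpmod{b}$, where $b^{m-t}$ strips the trailing zero digits of $n$; this is a bijection since $\sum_{t=1}^m b^{t-1}(b-1)=b^m-1$. On the level of polynomials $n(x)=x^{m-t}\ell(x)$ with $\ell(0)\ne 0$, so that $v_m\left(\frac{n(x)g_j(x)}{x^m}\right)=v_m\left(\frac{\ell(x)g_j(x)}{x^t}\right)=v_t\left(\frac{\ell(x)g_j(x)}{x^t}\right)$, the last equality holding because for $t\le m$ the maps $v_m$ and $v_t$ read off the same nonnegative-degree coefficients. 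Substituting this identity turns the product reformulation into a double sum over $t$ and $\ell$ of exactly the shape appearing on the right-hand side of \eqref{eq:H-average}.

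Finally I would insert the $d$-th component $g_d+g\,p_{w-1}+\bar g\,p_w$ and average over $\bar g\in G_{b,m-w}$, splitting the $t$-sum at $w$. For $t\le w-1$ the quantity $v_t$ depends only on coefficients of degree $\le t-1<w-1$, and since $\ell(0)\ne 0$ the contributions of $g\,p_{w-1}$ and $\bar g\,p_w$ start in degree $\ge w-1>t-1$; hence the $d$-th factor is independent of $g$ and $\bar g$ and reproduces the first sum in $S_{m,w,\bseta}$. For $t\ge w$ only the coefficients of $\bar g$ of degree $\le t-1-w$ influence $v_t$, so the average over $G_{b,m-w}$ reduces to one over $G_{b,t-w}$. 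Writing $\phi_t(\bar g)$ for the level-$t$ floor-logarithm term of the $d$-th component, I would iterate Lemma~\ref{lem:bit-average} exactly $t-w$ times, peeling off the top coefficient (at degree $t-1$, then $t-2$, etc.) at each step; the hypotheses $\ell(0)\ne 0$ and $g_d(0)\ne 0$ are preserved throughout, so the lemma applies at every level down to $w+1$. Since each application produces a correction of $-1$ (the lemma returns $\floor{\log_b(\cdots)}=\phi_{t-1}-1$), tracking these corrections yields a geometric sum and gives
\[
  \frac{1}{b^{t-w}}\sum_{\bar g\in G_{b,t-w}}\left(1+\eta_d(1-b)\,\phi_t(\bar g)\right)
  = 1+\eta_d(1-b^{w-t})
  +\eta_d(1-b)\,b^{w-t}\left(\floor{\log_b\left(v_w\left(\frac{\ell(x)\,(g_d(x)+g\,x^{w-1})}{x^w}\right)\right)}+1\right),
\]
using $(1-b)\cdot\bigl(-\tfrac{1}{b-1}\bigr)=1$. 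Multiplying by $\prod_{j<d}(\cdots)$ and summing over $t\ge w$ and $\ell$, the first two summands produce the second sum in $S_{m,w,\bseta}$ while the last summand produces the displayed main term; combining with the $t\le w-1$ contribution and the constant $-(b^m-1)$ gives \eqref{eq:H-average}.

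I expect the reindexing $n=b^{m-t}\ell$ together with the reduction $v_m=v_t$ to be the conceptual heart of the argument, and the bookkeeping of the accumulated $-1$ corrections in the iterated application of Lemma~\ref{lem:bit-average} — which must collapse precisely into the clean term $1+\eta_d(1-b^{w-t})$ — to be the most error-prone computation.
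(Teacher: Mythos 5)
Your proposal is correct and follows essentially the same route as the paper's proof: the product-weight collapse of $H_{d,m,\bseta}$, the reindexing of $n$ as $b^{m-t}\ell$ with $\ell \not\equiv 0 \tpmod{b}$ (which the paper asserts implicitly via its rewriting of $\bar{H}_{d,m,\bseta}$), the split of the $t$-sum at $w$ using the truncation property $v_t(q/x^t) = v_t((q \bmod x^t)/x^t)$, and the reduction of the average to $G_{b,t-w}$ followed by iterated application of Lemma~\ref{lem:bit-average} with the accumulated $-1$ corrections summing to the geometric term $-\frac{1-b^{w-t}}{b-1}$, which produces exactly $1+\eta_d(1-b^{w-t})$. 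Your computation of the averaged $d$-th factor agrees with the paper's displayed chain of identities, so there is nothing to correct.
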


\begin{proof}
	For product weights $\eta_\setu = \prod_{j \in \setu} \eta_j$ and $\widetilde{\bsg}=(\widetilde{g}_1,\ldots,\widetilde{g}_d)\in (\F_b [x])^d$, 
	the quantity $H_{d,m,\bseta}(\widetilde{\bsg})$ defined in \eqref{eq:def_H} equals
	\begin{equation*}
		H_{d,m,\bseta}(\widetilde{\bsg})
		=
		\sum_{n=1}^{b^m-1} \prod_{j=1}^d \left(1 + \eta_j (1-b) \left( \floor{\log_b \left(v_m \left( \frac{n(x) \, 
		\widetilde{g}_j(x)}{x^m} \right) \right)} + 1 \right) \right) - (b^m-1).
	\end{equation*}
	We define $\bar{H}_{d,m,\bseta}(\widetilde{\bsg}) := H_{d,m,\bseta}(\widetilde{\bsg}) + (b^{m}-1)$ which in turn can be rewritten as
	\begin{equation*}
		\bar{H}_{d,m,\bseta}(\widetilde{\bsg})
		=
		\sum_{t=1}^{m} \sum_{\substack{\ell=1 \\ \ell \not\equiv 0 \tpmod b }}^{b^t-1} \prod_{j=1}^d \left(1 + \eta_j (1-b) 
		\left( \floor{\log_b \left(v_t \left( \frac{\ell(x) \, \widetilde{g}_j(x)}{x^t} \right) \right)} + 1 \right) \right).
	\end{equation*}
	Setting $\widetilde{g}_d=g_d + g \,p_{w-1} + \bar{g} \,p_w$ with $\bar{g}\in G_{b,m-w}$ and $\widetilde{g}_j=g_j$ for $j \in \{1 \mcol d-1\}$ , we can write
	\begin{eqnarray*}
		\lefteqn{\frac{1}{b^{m-w}} \sum_{\bar{g} \in G_{b,m-w}} H_{d,m,\bseta}(g_1,\ldots,g_{d-1},g_d + g \,p_{w-1} + \bar{g} \,p_w)
		=
		\frac{1}{b^{m-w}} \sum_{\bar{g} \in G_{b,m-w}} \bar{H}_{d,m,\bseta}(\widetilde{\bsg})- (b^m-1)}
		\\
		&=& \!\!\!
		\frac{1}{b^{m-w}} \!\!\! \sum_{\bar{g} \in G_{b,m-w}}
		\sum_{t=1}^{m} \sum_{\substack{\ell=1 \\ \ell \not\equiv 0 \tpmod b }}^{b^t-1} \prod_{j=1}^d \left(1 + \eta_j (1-b) 
		\left( \floor{\log_b \left(v_t \left( \frac{\ell(x) \, \widetilde{g}_j(x)}{x^t} \right) \right)} + 1 \right) \right)
		- (b^m-1)
		\\
		&=& \!\!\!
		\frac{1}{b^{m-w}} \!\!\! \sum_{\bar{g} \in G_{b,m-w}}
		\sum_{t=1}^{w-1} \sum_{\substack{\ell=1 \\ \ell \not\equiv 0 \tpmod b }}^{b^t-1} \prod_{j=1}^d \left(1 + \eta_j (1-b) 
		\left( \floor{\log_b \left(v_t \left( \frac{\ell(x) \, \widetilde{g}_j(x)}{x^t} \right) \right)} + 1 \right) \right)
		- (b^m-1)
		\\
		&&\!\!\!+\frac{1}{b^{m-w}} \sum_{\bar{g} \in G_{b,m-w}}
		\sum_{t=w}^{m} \sum_{\substack{\ell=1 \\ \ell \not\equiv 0 \tpmod b }}^{b^t-1} \prod_{j=1}^d \left(1 + \eta_j (1-b) 
		\left( \floor{\log_b \left(v_t \left( \frac{\ell(x) \,\widetilde{g}_j(x)}{x^t} \right) \right)} + 1 \right) \right)
		.
	\end{eqnarray*}
	The term $- (b^m-1)$ in \eqref{eq:H-average} is therefore accounted for. What is more, by the definition of $v_t$ we have for any $q \in \F_b[x]$ that
	\begin{equation} \label{eq:mod_prop}
		v_t \left( \frac{q(x)}{x^t} \right)
		=
		v_t \left( \frac{q(x) \bmod x^t}{x^t} \right)
		,
	\end{equation}
	and hence 
	\begin{eqnarray*}
		\lefteqn{\frac{1}{b^{m-w}} \sum_{\bar{g} \in G_{b,m-w}}
		\sum_{t=1}^{w-1} \sum_{\substack{\ell=1 \\ \ell \not\equiv 0 \tpmod b }}^{b^t-1} \prod_{j=1}^d \left(1 + \eta_j (1-b) 
		\left( \floor{\log_b \left(v_t \left( \frac{\ell(x) \, \widetilde{g}_j(x)}{x^t} \right) \right)} + 1 \right) \right)}
		\\
		&=&
		\sum_{t=1}^{w-1} \sum_{\substack{\ell=1 \\ \ell \not\equiv 0 \tpmod b }}^{b^t-1} \prod_{j=1}^d \left(1 + \eta_j (1-b) 
		\left( \floor{\log_b \left(v_t \left( \frac{\ell(x) \,g_j}{x^t} \right) \right)} + 1 \right) \right)
		,
	\end{eqnarray*}
	which is the first sum in $S_{m,w,\bseta}$, and, in particular, is independent of $g$ and all $\bar{g} \in G_{b,m-w}$. 
	
	The second sum in $S_{m,w,\bseta}$ and all remaining terms in identity \eqref{eq:H-average} are obtained by considering
	\begin{align} \label{eq:estimate_H_bar}
		&\frac{1}{b^{m-w}} \sum_{\bar{g} \in G_{b,m-w}} \sum_{t=w}^{m} \sum_{\substack{\ell=1 \\ \ell \not\equiv 0 \tpmod b }}^{b^t-1} 
		\prod_{j=1}^d \left(1 + \eta_j (1-b) \left( \floor{\log_b \left(v_t \left( \frac{\ell(x) \, \widetilde{g}_j(x)}{x^t} \right) \right)} + 1 \right) \right)
		\notag \\
		\begin{split}
		&=
		\sum_{t=w}^{m} \sum_{\substack{\ell=1 \\ \ell \not\equiv 0 \tpmod b }}^{b^t-1} 
		\prod_{j=1}^{d-1} \left(1 + \eta_j (1-b) \left( \floor{\log_b \left(v_t \left( \frac{\ell(x) \, g_j(x)}{x^t} \right) \right)} + 1 \right) \right) \times
		\\
		&\quad\times
		\left(1 + \eta_d (1-b) \frac{1}{b^{m-w}} \sum_{\bar{g} \in G_{b,m-w}} 
		\left(\floor{\log_b \left(v_t \left( \frac{\ell (x) \, (g_d (x) + g \,x^{w-1} + \bar{g}(x) \,x^w)}{x^t} \right) \right)}
		+ 1 \right) \right)
		,
		\end{split}
	\end{align}
	such that, with the help of \eqref{eq:mod_prop} and under the repeated use of Lemma $\ref{lem:bit-average}$, we obtain for each $t \in \{w+1,\ldots,m\}$ that
	\begin{align*}
		&\sum_{\bar{g} \in G_{b,m-w}} \left( \floor{\log_b \left(v_t \left( \frac{\ell (x) \, (g_d (x) + g \,x^{w-1} + \bar{g} (x) \,x^w)}{x^t} \right) \right)} + 1 \right)
		\\
		&\quad=
		b^{m-t} \sum_{\bar{g} \in G_{b,t-w}} \left( \floor{\log_b \left(v_t \left( \frac{\ell (x)\, (g_d (x) + g \,x^{w-1} + \bar{g} (x) \,x^w)}{x^t} \right) \right)} + 1 \right)
		\\
		&\quad=
		b^{m-t} \sum_{\bar{g} \in G_{b,t-w-1}} \left( \floor{\log_b \left(v_{t-1} \left( \frac{\ell (x)\, (g_d (x) + g \,x^{w-1} + \bar{g} (x) \,x^w)}{x^{t-1}} \right) \right)} + 1 - 1 \right)
		\\
		&\quad=
		b^{m-t} \left( \floor{\log_b \left(v_w \left( \frac{\ell (x) \, (g_d (x) + g \,x^{w-1})}{x^w} \right) \right)} + 1 \right) - b^m \sum_{r=w+1}^t b^{-r}
		\\
		&\quad=
		b^{m-w} b^{w-t} \left(\floor{\log_b \left(v_w \left( \frac{\ell (x) \, (g_d (x) + g \,x^{w-1})}{x^w} \right) \right)} + 1 \right) - b^{m-w} \left( \frac{1-b^{w-t}}{b-1} \right)
		.
	\end{align*}
	Combining this with the identity in \eqref{eq:estimate_H_bar} yields the remaining term of $S_{m,w,\bseta}$ and the first term in \eqref{eq:H-average} such that
	the claimed result is proved.
\end{proof}

We note that only the first term of \eqref{eq:H-average} in Lemma \ref{lem:H-average} depends on the $(w-1)$-th order term $g x^{w-1}$ of $g_d$.
Therefore, we can introduce the quality function for our algorithm which is based on the first term of \eqref{eq:H-average}, yet slightly 
adjusted by an additional summand that is independent of $g$ and $\bar{g}$.

\begin{definition}\label{def:h_r,w} (Digit-wise quality function)
	Let $q \in \F_b[x]$, with prime $b$, let $m,d \in \N$, and let $\bseta = (\eta_{\setu})_{\setu \subseteq \{1:d\}}$, where
	$\eta_\setu=\prod_{j \in \setu} \eta_j$ with positive reals $(\eta_j)_{j\ge1}$, be product weights.
	For integers $w \in \{1 \mcol m\}$, $r \in \{1 \mcol d\}$, and polynomials $g_1,\ldots,g_{r-1} \in \F_b[x]$ with $\gcd(g_j,p_1)=1$ for $j=1,\ldots,r-1$, we define the 
	quality function $h_{r,w,m,\bseta}: \F_b[x] \to \R$ as
	\begin{align*}
		h_{r,w,m,\bseta}(q)
		&:=
		\sum_{t=w}^m \frac{1}{b^{t-w}} \sum_{\substack{\ell=1 \\ \ell \not\equiv 0 \tpmod b }}^{b^t-1}
		\left( 1 + \eta_{r} (1-b) \left( \floor{\log_b \left(v_w \left( \frac{\ell(x) \, q(x) }{x^w} \right) \right)} + 1 \right) \right) \times
		\notag \\
		&\quad\times \prod_{j=1}^{r-1} \left(1 + \eta_j (1-b) \left( \floor{\log_b \left(v_t \left( \frac{\ell(x) \, g_j(x)}{x^t} \right) \right)} + 1 \right) \right)
		.
	\end{align*}
\end{definition}

\noindent
We remark that the function $h_{r,w,m,\bseta}$ directly depends on the polynomials $g_1,\ldots,g_{r-1}$ even though this is not visible in the notation.
In the remainder of this section, however, these polynomials will always be the components of the generating vector which were selected in the previous steps of our
algorithm. Based on the quality function $h_{r,w,m,\bseta}$, we formulate the component-by-component digit-by-digit algorithm.

\begin{algorithm}[H] 
	\caption{Component-by-component digit-by-digit algorithm}	
	\label{alg:cbcdbd}
	\vspace{5pt}
	\textbf{Input:} Prime number $b \ge 2$, integers $m,d \in \N$, and positive product weights $\bseta=(\eta_j)_{j \ge 1}$. \\
	\vspace{-10pt}
	\begin{algorithmic}
		\STATE Set $g_{1,m} = 1$ and $g_{2,1} = \cdots = g_{d,1} = 1$.
		\vspace{5pt}
		\FOR{$r=2$ \TO $d$}
			\FOR{$w=2$ \TO $m$}
				\STATE $g^{\ast} = \underset{g \in \F_b}{\argmin} \;  h_{r,w,m,\bseta}(g_{r,w-1} + g \,p_{w-1})$
				\STATE $g_{r,w} = g_{r,w-1} + g^{\ast} p_{w-1}$
			\ENDFOR
		\ENDFOR
		\vspace{5pt}
		\STATE Set $\bsg = (g_1,\ldots,g_d)$ with $g_r := g_{r,m}$ for $r=1,\ldots,d$.
	\end{algorithmic}
	\vspace{5pt}
	\textbf{Return:} Generating vector $\bsg=(g_1,\ldots,g_d) \in (G_{b,m}^\ast)^d$.
\end{algorithm}

In the next section, we study the worst-case error behavior of polynomial lattice rules with generating vectors obtained by Algorithm \ref{alg:cbcdbd}. 

\subsection{Error bounds for the constructed polynomial lattice rules}

The following theorem shows that for the constructed polynomial lattice rules the quantity $H_{d,m,\bseta}(\bsg)$, which for product weights 
$\eta_{\setu} = \prod_{j\in\setu} \eta_j$ equals
\begin{equation*}
	\begin{aligned}
		H_{d,m,\bseta}(\bsg)
		&=
		\sum_{n=1}^{b^m-1} \prod_{j=1}^d \left(1 + \eta_j (1-b) \left( \floor{\log_b \left(v_m \left( \frac{n(x) \, g_j(x)}{x^m} \right) \right)} + 1 \right) \right) - (b^m-1)
		\\
		&=
		\sum_{t=1}^{m} \sum_{\substack{\ell=1 \\ \ell \not\equiv 0 \tpmod b }}^{b^t-1} \prod_{j=1}^d \left(1 + \eta_j (1-b) \left( \floor{\log_b \left(v_t \left( \frac{\ell(x) \, g_j(x)}{x^t} \right) \right)} + 1 \right) \right) - (b^m-1)
		,
	\end{aligned}
\end{equation*}
can be related to the quantity $H_{d-1,m,\bseta}(\bsg_{\{1 \mcol d-1\}})$. 
  
\begin{theorem} \label{thm:H_s_s-1}
	Let $b$ be prime, $m,d \in \N$ be integers with $d\ge 2$, and let $\bseta = (\eta_j)_{j\ge 1}$ be positive product weights.
	Furthermore, denote by $\bsg$ the corresponding generating vector constructed by Algorithm \ref{alg:cbcdbd}. Then $\bsg$ satisfies
	\begin{equation} \label{eq:H_s_s-1}
		H_{d,m,\bseta}(\bsg)
		\le
		\left( 1 + \eta_d \right) H_{d-1,m,\bseta}(\bsg_{\{1:d-1\}}) + \eta_d (b^m-1)
		.
	\end{equation}
\end{theorem}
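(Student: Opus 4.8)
The plan is to track, along the inner (digit) loop for the last component, a family of averaged quantities and exploit that a greedy digit choice always beats the average. For $w \in \{1:m\}$ write $g_{d,w}$ for the $d$-th component after its digits up to order $w-1$ have been fixed, and set
\[
	\Phi_w := \frac{1}{b^{m-w}} \sum_{\bar g \in G_{b,m-w}} H_{d,m,\bseta}\bigl(g_1,\ldots,g_{d-1},\, g_{d,w} + \bar g\, p_w\bigr),
\]
the average of the target quantity over all completions of the partial $d$-th component. Since $G_{b,0}=\{0\}$ and $g_{d,m}=g_d$, we have $\Phi_m = H_{d,m,\bseta}(\bsg)$, while $\Phi_1$ (where $g_{d,1}=1$ is fixed) will be the base case. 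The whole proof reduces to the chain $H_{d,m,\bseta}(\bsg)=\Phi_m \le \Phi_{m-1}\le\cdots\le\Phi_1 \le (1+\eta_d)H_{d-1,m,\bseta}(\bsg_{\{1:d-1\}})+\eta_d(b^m-1)$.

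First I would establish the monotonicity $\Phi_w \le \Phi_{w-1}$ for $w=2,\ldots,m$. Comparing the digit-wise quality function $h_{d,w,m,\bseta}$ with the first term on the right-hand side of \eqref{eq:H-average}, one sees that they differ only by the $g$-independent quantity $\sum_{t=w}^m b^{w-t}\sum_{\ell\not\equiv 0}\prod_{j<d}(\cdots)$. Hence, by Lemma \ref{lem:H-average}, the function $\Psi_w(g):=b^{-(m-w)}\sum_{\bar g\in G_{b,m-w}}H_{d,m,\bseta}(\ldots,g_{d,w-1}+g\,p_{w-1}+\bar g\,p_w)$ equals $h_{d,w,m,\bseta}(g_{d,w-1}+g\,p_{w-1})$ plus a constant independent of $g$. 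Thus the minimiser $g^\ast$ chosen by Algorithm \ref{alg:cbcdbd} also minimises $\Psi_w$; and, using $p_w=x\,p_{w-1}$ so that $g\,p_{w-1}+\bar g\,p_w=p_{w-1}(g+\bar g\,x)$ with $g+\bar g\,x$ ranging over $G_{b,m-(w-1)}$, averaging $\Psi_w$ over $g\in\F_b$ reproduces exactly $\Phi_{w-1}$. Therefore
\[
	\Phi_w = \Psi_w(g^\ast) = \min_{g\in\F_b}\Psi_w(g) \le \frac{1}{b}\sum_{g\in\F_b}\Psi_w(g) = \Phi_{w-1},
\]
and telescoping gives $H_{d,m,\bseta}(\bsg)=\Phi_m\le \Phi_1$.

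It remains to evaluate $\Phi_1$, which is the heart of the argument. Expanding $H_{d,m,\bseta}$ in its $\sum_t\sum_\ell$ product form, the $d$-th factor is the only one depending on $\bar g$, so I would compute $\bar F_t := b^{-(m-1)}\sum_{\bar g\in G_{b,m-1}}(\floor{\log_b(v_t(\ell(1+\bar g\, x)/x^t))}+1)$ for each $t$. Only the lowest $t$ digits of $1+\bar g\,x$ are relevant, and iterating Lemma \ref{lem:bit-average} a total of $t-1$ times (each application lowering the truncation level by one and removing a ``$+1$'') collapses the average to $b^{-(t-1)}(\floor{\log_b(v_1(\ell/x))}+1)-\sum_{i=1}^{t-1}b^{-i}$; since the leading bracket vanishes, $\bar F_t = (b^{1-t}-1)/(b-1)$. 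Substituting $1+\eta_d(1-b)\bar F_t = 1+\eta_d-\eta_d b^{1-t}$ and recognising $\sum_{t}\sum_\ell\prod_{j<d}(\cdots)=H_{d-1,m,\bseta}(\bsg_{\{1:d-1\}})+(b^m-1)$ yields
\[
	\Phi_1 = (1+\eta_d)\,H_{d-1,m,\bseta}(\bsg_{\{1:d-1\}}) + \eta_d(b^m-1) - \eta_d\, R,
\]
where $R:=\sum_{t=1}^m b^{1-t}\sum_{\ell\not\equiv 0}\prod_{j=1}^{d-1}\bigl(1+\eta_j(1-b)(\floor{\log_b(v_t(\ell g_j/x^t))}+1)\bigr)$. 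Each factor in $R$ is $\ge 1$: since $\gcd(\ell,x)=\gcd(g_j,x)=1$ forces $v_t(\ell g_j/x^t)\in[b^{-t},1)$, we get $\floor{\log_b(\cdot)}+1\le 0$, while $\eta_j(1-b)<0$. Hence $R\ge 0$, and dropping the term $-\eta_d R$ gives $\Phi_1\le (1+\eta_d)H_{d-1,m,\bseta}(\bsg_{\{1:d-1\}})+\eta_d(b^m-1)$, which combined with $H_{d,m,\bseta}(\bsg)\le\Phi_1$ is precisely \eqref{eq:H_s_s-1}.

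The main obstacle, I expect, is the evaluation of $\Phi_1$: correctly bookkeeping which digits of $1+\bar g\,x$ feed into each $v_t$, performing the iterated averaging via Lemma \ref{lem:bit-average} without sign or off-by-one slips in the ``$+1$'' terms, and checking that the residual sum $R$ has the right (nonnegative) sign so it can be discarded in the desired direction. By contrast, the telescoping is conceptually the key idea but technically light, once the identification of $h_{d,w,m,\bseta}$ with the $\bar g$-average of $H_{d,m,\bseta}$ up to a $g$-independent constant is in place.
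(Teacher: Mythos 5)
Your proposal is correct and follows essentially the same route as the paper's proof: the monotone chain $\Phi_m\le\Phi_{m-1}\le\cdots\le\Phi_1$ is exactly the paper's inductive greedy-beats-average argument (using Lemma \ref{lem:H-average} to identify the minimiser of $h_{d,w,m,\bseta}$ with the minimiser of the conditional average of $H_{d,m,\bseta}$), and your evaluation of $\Phi_1$ via iterated use of Lemma \ref{lem:bit-average} reproduces the paper's application of Lemma \ref{lem:H-average} with $w=1$, $g_d=1$, $g=0$. The only differences are presentational: the paper bounds $1+\eta_d(1-b^{1-t})\le 1+\eta_d$ termwise where you isolate the residual $\eta_d R$ and show $R\ge 0$ (your explicit verification that each factor is $\ge 1$ is a point the paper leaves implicit).
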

\begin{proof}
	We will prove \eqref{eq:H_s_s-1} by an inductive argument over the selection of the terms of order $1 \le t \le m-1$ of the polynomial $g_d \in \F_b[x]$.
	We start by considering the term of order $m-1$. According to Algorithm \ref{alg:cbcdbd}, this term has been selected by minimizing 
	$h_{d,m,m,\bseta}(g_{d,m-1} + g \,p_{m-1})$ over the choices $g \in \F_b$, and where $g_{d,m-1} \in G_{b,m-1}$ has been determined in the previous 
	steps of the algorithm. By Lemma \ref{lem:H-average} (with $w=m$) and Definition \ref{def:h_r,w} this is equivalent to minimizing
	\begin{equation*}
		H_{d,m,\bseta}(g_1,\ldots,g_{d-1},g_{d,m-1} + g \,p_{m-1})
	\end{equation*}
	with respect to $g \in \F_b$. By the standard averaging argument, this yields
	\begin{align} \label{eq:est_H_ind}
		H_{d,m,\bseta}(\bsg)
		&=
		\min_{\bar{g} \in \F_b} H_{d,m,\bseta}\left(\bsg_{\{1 \mcol d-1\}},g_{d,m-1} + \bar{g} \,p_{m-1} \right)
		\notag \\
		&\le
		\frac1b \sum_{\bar{g} \in \F_b} H_{d,m,\bseta}\left(\bsg_{\{1 \mcol d-1\}},g_{d,m-1} + \bar{g} \,p_{m-1} \right)
		\notag \\
		&=
		\frac1b \sum_{\bar{g} \in G_{b,1}} H_{d,m,\bseta}\left(\bsg_{\{1 \mcol d-1\}},g_{d,m-2} + g \,p_{m-2} + \bar{g} \,p_{m-1} \right)
		,
	\end{align}
	where $g_{d,m-1}$ has been split up into $g_{d,m-2}$ and $g \,p_{m-2}$ in accordance with Algorithm \ref{alg:cbcdbd} such that $g$ has been selected in the previous step of the algorithm and we used that $G_{b,1} \cong \F_b$. 
	
	Similarly, we observe that the term of order $m-2$ has been selected by minimizing $h_{d,m-1,m,\bseta}(g_{d,m-2} + g \,p_{m-2})$ with respect to 
	the choices $g \in \F_b$. Again, by Lemma \ref{lem:H-average} (with $w=m-1$) and Definition \ref{def:h_r,w} this is equivalent to minimizing
	\begin{equation*}
		\frac{1}{b} \sum_{\bar{g} \in G_{b,1}} H_{d,m,\bseta}(\bsg_{\{1 \mcol d-1\}},g_{d,m-2} + g \,p_{m-2} + \bar{g} \,p_{m-1})
	\end{equation*}
	with respect to $g \in G_{b,1} \cong \F_b$. By the standard averaging argument, we obtain that
	\begin{align*}
		&\min_{g \in G_{b,1}} \frac1b \sum_{\bar{g} \in \F_b} H_{d,m,\bseta}\left(\bsg_{\{1 \mcol d-1\}},g_{d,m-2} + g \,p_{m-2} + \bar{g} \,p_{m-1} \right)
		\\
		&\quad\le
		\frac{1}{b^2} \sum_{g \in \F_b} \sum_{\bar{g} \in G_{b,1}} H_{d,m,\bseta}\left(\bsg_{\{1 \mcol d-1\}},g_{d,m-2} + g \, p_{m-2} + \bar{g} \,p_{m-1} \right)
		\\
		&\quad=
		\frac{1}{b^2} \sum_{\bar{g} \in G_{b,2}} H_{d,m,\bseta}\left(\bsg_{\{1 \mcol d-1\}},g_{d,m-3} + g \, p_{m-3} + \bar{g} \,p_{m-2} \right),
	\end{align*}
	where again we split up $g_{d,m-2}= g_{d,m-3}+ g \, p_{m-3}$ according to Algorithm \ref{alg:cbcdbd}. Inductively repeating this argument and combining
	the result with the estimate in \eqref{eq:est_H_ind}, we obtain the inequality
	\begin{equation*}
		H_{d,m,\bseta}(\bsg)
		\le
		\frac{1}{b^{m-1}} \sum_{\bar{g} \in G_{b,m-1}} H_{d,m,\bseta} \left(\bsg_{\{1 \mcol d-1\}}, 1 + \bar{g} \,p_1 \right)
		,
	\end{equation*}
	where we used that in Algorithm \ref{alg:cbcdbd} we set $g_{d,1}=1$. Then, using Lemma \ref{lem:H-average} with $w=1$, $g_d=1$, and $g=0$ 
	to equate the right-hand side of the previous estimate, we finally obtain
	\begin{align*}
		H_{d,m,\bseta}(\bsg)
		&\le
		\sum_{t=1}^{m} \sum_{\substack{\ell=1 \\ \ell \not\equiv 0 \tpmod b }}^{b^t-1}
		\eta_d (1-b) b^{1-t} \left( \floor{\log_b \left(v_1 \left( \frac{\ell(x)}{x} \right) \right)} + 1 \right) \times
		\notag \\
		&\qquad\qquad\qquad\quad \times \prod_{j=1}^{d-1} \left(1 + \eta_j (1-b) \left( \floor{\log_b \left(v_t \left( \frac{\ell(x) \, g_j(x)}{x^t} \right) \right)} + 1 \right) \right) - (b^m-1)
		\\
		&\quad+
		\sum_{t=1}^m \!\! \sum_{\substack{\ell=1 \\ \ell \not\equiv 0 \tpmod b }}^{b^t-1} \!\!
		\prod_{j=1}^{d-1} \left(1 + \eta_j (1-b) \!\left(\floor{\log_b \left(\!v_t \left( \frac{\ell(x) \, g_j(x)}{x^t} \right) \right)} + 1 \right) \right) \!\left( 1 + \eta_d (1-b^{1-t}) \right)
		.
	\end{align*}
	For $\ell$ with $\ell \not\equiv 0 \tpmod b$, which is equivalent to $\gcd(\ell,p_1)=1$, we have for some $a \in \F_b \setminus \{0\}$ that 
	$\floor{\log_b \left(v_1 \left( \ell(x) / x \right) \right)} + 1 = \floor{\log_b \left( a / b \right)} + 1 = - 1 + 1 = 0$.
	Hence, we get
	\begin{align*}
		H_{d,m,\bseta}(\bsg)
		&\le
		\sum_{t=1}^m \sum_{\substack{\ell=1 \\ \ell \not\equiv 0 \tpmod b }}^{b^t-1}
		\prod_{j=1}^{d-1} \left(1 + \eta_j (1-b) \!\left(\floor{\log_b \left(\!v_t \left( \frac{\ell(x) \, g_j(x)}{x^t} \right) \right)} + 1 \right) \right) \times
		\\
		&\qquad\qquad\qquad\quad\times
		\left( 1 + \eta_d (1-b^{1-t}) \right) - (b^m-1)
		\\
		&\le
		\left( 1 + \eta_d \right) (H_{d-1,m,\bseta}(\bsg_{\{1:d-1\}}) + (b^m-1)) - (b^m-1)
		\\
		&=
		\left( 1 + \eta_d \right) H_{d-1,m,\bseta}(\bsg_{\{1:d-1\}}) + \eta_d (b^m-1)
		,
	\end{align*}
	which is the claimed estimate.
\end{proof}

Based on the result in Theorem \ref{thm:H_s_s-1} we can use an inductive argument to show that the quantity $H_{d,m,\bseta}(\bsg)$ is sufficiently small
if $\bsg$ has been constructed by Algorithm \ref{alg:cbcdbd}.

\begin{theorem} \label{thm:H_bound}
	Let $b$ be prime, let $m,d \in \N$ be positive integers and let $\bseta = (\eta_j)_{j\ge 1}$ be positive product weights.
	Then the generating vector $\bsg$ constructed by Algorithm \ref{alg:cbcdbd} satisfies 
	\begin{equation*}
		H_{d,m,\bseta}(\bsg)
		\le
		b^m \left[-1 + \prod_{j=1}^d ( 1 + \eta_j ) \right]
		.
	\end{equation*}
\end{theorem}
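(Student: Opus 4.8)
The plan is to argue by induction on the dimension $d$, using the one-step recursion of \RefThm{thm:H_s_s-1} as the engine and checking that the claimed bound is stable under that recursion. Write $B_d := b^m \bigl[-1 + \prod_{j=1}^d (1+\eta_j)\bigr]$ for the target, so the goal is to show $H_{d,m,\bseta}(\bsg) \le B_d$ for the vector $\bsg$ returned by \RefAlg{alg:cbcdbd}.

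First I would dispose of the base case $d=1$. Here the algorithm returns $g_1 = 1$, so $\gcd(g_1,p_m)=1$ and \RefLem{lem:projections_PLR} guarantees that $\{v_m(n(x)/x^m) : 1 \le n \le b^m-1\}$ is exactly the punctured grid $\{a/b^m : 1 \le a \le b^m-1\}$. The leading $\sum_{n=1}^{b^m-1} 1$ cancels the $-(b^m-1)$ in the definition of $H$, so that
\begin{equation*}
	H_{1,m,\bseta}(1)
	=
	\eta_1 (1-b) \sum_{a=1}^{b^m-1} \left( \floor{\log_b(a/b^m)} + 1 \right)
	=
	\eta_1 (1-b) \sum_{a=1}^{b^m-1} \left( \floor{\log_b a} - m + 1 \right).
\end{equation*}
Grouping the $a$ by the value $\floor{\log_b a} = t$ (which occurs for $(b-1)b^t$ many $a$, for $0 \le t \le m-1$) and summing the resulting arithmetic-times-geometric series yields the closed form $H_{1,m,\bseta}(1) = \eta_1 \bigl(b^m - m(b-1) - 1\bigr)$. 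Since $m(b-1)+1 > 0$, this is at most $\eta_1 b^m = B_1$, establishing the base case.

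For the inductive step I would assume $H_{d-1,m,\bseta}(\bsg_{\{1:d-1\}}) \le B_{d-1}$ and invoke \RefThm{thm:H_s_s-1}. Because $1+\eta_d > 0$, the bound there is monotone in $H_{d-1,m,\bseta}$, giving $H_{d,m,\bseta}(\bsg) \le (1+\eta_d) B_{d-1} + \eta_d (b^m-1)$. The decisive algebraic check is that the right-hand side collapses: substituting $B_{d-1} = b^m\bigl(\prod_{j=1}^{d-1}(1+\eta_j) - 1\bigr)$ and simplifying gives $b^m\bigl(\prod_{j=1}^{d}(1+\eta_j) - 1\bigr) - \eta_d = B_d - \eta_d$. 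As $\eta_d > 0$, this lies strictly below $B_d$, which closes the induction.

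The inductive step is essentially free once the recursion of \RefThm{thm:H_s_s-1} is available, so the only genuine computation is the base case. The mild obstacle there is evaluating $\sum_{a=1}^{b^m-1} \floor{\log_b a}$ exactly (equivalently, summing $\sum_{t=0}^{m-1} t(b-1)b^t$) and verifying that the resulting closed form stays below $b^m \eta_1$; everything beyond that is the bookkeeping that confirms $B_d$ obeys the same recursion as the upper bound in \RefThm{thm:H_s_s-1}.
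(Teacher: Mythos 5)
Your proposal is correct and follows essentially the same route as the paper: both arguments use the recursion of \RefThm{thm:H_s_s-1} (the paper unrolls it, you phrase it as a formal induction on $d$) together with the exact evaluation $H_{1,m,\bseta}(1)=\eta_1\bigl(b^m-(b-1)m-1\bigr)$, which you derive via \RefLem{lem:projections_PLR} and the paper derives by grouping polynomials by degree. The only difference is cosmetic: the paper's unrolled version yields the marginally sharper constant $(b^m-1)$ in place of $b^m$, while your induction loses the harmless term $\eta_d$ at each step.
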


\begin{proof}
	Due to the formulation of Algorithm \ref{alg:cbcdbd}, the estimate \eqref{eq:H_s_s-1} obtained in Theorem \ref{thm:H_s_s-1} holds 
	if we replace $d$ by $r$ for any $r \in \{2,\ldots,d\}$, such that we get a result for $H_{r,m,\bseta}(\bsg)$ for any $r \in \{2,\ldots,d\}$. 
	Hence, we can use this estimate inductively to obtain
	\begin{align} \label{eq:estimate_H_ind}
		H_{d,m,\bseta}(\bsg)
		&\le
		( 1 + \eta_d ) H_{d-1,m,\bseta}(\bsg_{\{1:d-1\}}) + \eta_d (b^m-1)
		\notag \\
		&\le
		( 1 + \eta_d ) ( 1 + \eta_{d-1} ) H_{d-2,m,\bseta}(\bsg_{\{1:d-2\}}) + (1 + \eta_d) \eta_{d-1} (b^m-1) + \eta_d (b^m-1)
		\notag \\
		&=
		H_{d-2,m,\bseta}(\bsg_{\{1:d-2\}}) \prod_{j=d-1}^d ( 1 + \eta_j ) + (b^m - 1) \left[-1 + \prod_{j=d-1}^d ( 1 + \eta_j ) \right]
		\notag \\
		&\le
		H_{1,m,\bseta}(g_1) \prod_{j=2}^d ( 1 + \eta_j ) + (b^m - 1) \left[-1 + \prod_{j=2}^d ( 1 + \eta_j ) \right]
		.
	\end{align}
	Next, we observe that
	\begin{align*}
		H_{1,m,\bseta}(g_1)
		&=
		H_{1,m,\bseta}(1)
		\\
		&=
		\sum_{n=1}^{b^m-1} \left(1 + \eta_1 (1-b) \left( \floor{\log_b \left(v_m \left( \frac{n(x)}{x^m} \right) \right)} + 1 \right) \right) - (b^m-1)
		\\
		&=
		- \eta_1 \sum_{n=1}^{b^m-1} (b-1) \left( \floor{\log_b \left(v_m \left( \frac{n(x)}{x^m} \right) \right)} + 1 \right)
		\\
		&=
		- \eta_1 \sum_{t=1}^{m} \sum_{\substack{n=1 \\ n \not\equiv 0 \tpmod b }}^{b^t-1} (b-1) \floor{\log_b \left(v_t \left( \frac{n(x)}{x^t} \right) \right)}
		+ \eta_1 (1-b) (b^m-1)
		\\
		&=
		- \eta_1 \sum_{t=1}^{m} \sum_{r=0}^{t-1} \sum_{\substack{n=1 \\ n \not\equiv 0 \tpmod b \\ \deg(n(x))=r }}^{b^t-1} (b-1) 
		\floor{\log_b \left(v_t \left( \frac{n(x)}{x^t} \right) \right)} + \eta_1 (1-b) (b^m-1).
	\end{align*}
	For any polynomial $n(x) \in \F_b[x]$ of degree $0 \le r < t$ with $\gcd(n,x)=1$, we have  that
	\begin{equation*}
		\floor{\log_b\left(v_t\left( \frac{n(x)}{x^t} \right) \right)}
		=
		-(t - r)
	\end{equation*}
	such that we can further deduce that
	\begin{align*}
		H_{1,m,\bseta}(g_1)
		&=
		\eta_1 \sum_{t=1}^{m} (b-1) \sum_{r=0}^{t-1} \sum_{\substack{n=1 \\ n \not\equiv 0 \tpmod b \\ \deg(n(x))=r }}^{b^t-1} (t-r) + \eta_1 (1-b) (b^m-1)
		\\
		&=
		\eta_1 \sum_{t=1}^{m} (b-1) \left( (b-1) t + \sum_{r=1}^{t-1} (b-1)^2 b^{r-1} (t-r) \right) + \eta_1 (1-b) (b^m-1)
		\\
		&=
		\eta_1 \sum_{t=1}^{m} (b-1) \left( (b-1) t + b^t - bt + t - 1 \right) + \eta_1 (1-b) (b^m-1)
		\\
		&=
		\eta_1 (b-1) \sum_{t=1}^{m} (b^t  - 1) + \eta_1 (1-b) (b^m-1)
		\\
		&=
		\eta_1 (b^{m+1} - bm - b + m) + \eta_1 (1-b) (b^m-1)
		=
		\eta_1 (b^m - (b-1)m - 1)
		.
	\end{align*}
	Combining this with the estimate in \eqref{eq:estimate_H_ind}, we finally obtain
	\begin{align*}
		H_{d,m,\bseta}(\bsg)
		&\le
		\eta_1 \,(b^m - 1) \prod_{j=2}^d ( 1 + \eta_j ) + (b^m - 1) \left[-1 + \prod_{j=2}^d ( 1 + \eta_j ) \right]\\
		&=
		(b^m - 1) \left[-1 + \prod_{j=1}^d ( 1 + \eta_j ) \right] ,
	\end{align*}
	which yields the claimed estimate.
\end{proof}
\noindent
Theorem \ref{thm:H_bound} allows us to prove the following result regarding the construction in Algorithm \ref{alg:cbcdbd}.

\begin{theorem} \label{thm:cbcdbd-T}
	Let $b$ be prime, let $m,d \in \N$ with $m\ge 4$, and let $(\eta_j)_{j\ge1}$ be positive product weights.
	Then the generating vector $\bsg$ constructed by Algorithm \ref{alg:cbcdbd} satisfies 
	\begin{equation*}
		T_{\bseta} (\bsg,p_m) 
		\le 
		\frac{1}{b^m} \left[ \prod_{j=1}^d (1 + \eta_j ((b-1)m+1)) + b \,m \prod_{j=1}^d \left(1 + \eta_j \left(2(b-1)m + \frac{2b}{b-1}\right) \right) \right]
		.
	\end{equation*}
\end{theorem}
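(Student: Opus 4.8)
The plan is to combine the splitting estimate of Theorem~\ref{thm:CBCDBD_split} with the bound on $H_{d,m,\bseta}(\bsg)$ from Theorem~\ref{thm:H_bound}, and then to collapse every subset-sum using the product structure of the weights. First I would check that the vector $\bsg$ returned by Algorithm~\ref{alg:cbcdbd} meets the hypotheses of Theorem~\ref{thm:CBCDBD_split}: each component is built from the initialization $g_{r,1}=1$ by adding multiples of $p_{w-1}=x^{w-1}$ with $w\ge 2$, so its constant coefficient stays $1$ and its degree is at most $m-1$; hence $\deg(g_r)<m$ and $\gcd(g_r,p_m)=\gcd(g_r,x^m)=1$ for every $r$. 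Theorem~\ref{thm:H_bound} applies to this $\bsg$ without further conditions, and the standing assumption $m\ge 4$ is inherited from Theorem~\ref{thm:CBCDBD_split}.

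With this in place, Theorem~\ref{thm:CBCDBD_split} gives
\begin{align*}
	T_{\bseta}(\bsg,p_m)
	&\le
	\frac{1}{b^m}H_{d,m,\bseta}(\bsg) - \sum_{\emptyset\ne\setu\subseteq\{1:d\}}\eta_\setu
	+ \frac{1}{b^m}\sum_{\emptyset\ne\setu\subseteq\{1:d\}}\eta_\setu((b-1)m+1)^{|\setu|}
	\\
	&\quad+ \frac{1}{b^m}\sum_{\emptyset\ne\setu\subseteq\{1:d\}}\eta_\setu\,(b\,m|\setu|)\left((b-1)m+\frac{b}{b-1}\right)^{|\setu|}.
\end{align*}
The key observation is that the first two terms cancel. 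For product weights the elementary identity $\prod_{j=1}^d(1+\eta_j)=\sum_{\setu\subseteq\{1:d\}}\eta_\setu$ gives $\sum_{\emptyset\ne\setu}\eta_\setu=-1+\prod_{j=1}^d(1+\eta_j)$, while Theorem~\ref{thm:H_bound} yields precisely $\frac{1}{b^m}H_{d,m,\bseta}(\bsg)\le-1+\prod_{j=1}^d(1+\eta_j)$, so the first two terms together are at most $0$.

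For the third term I would again use the product structure: since $\eta_\setu((b-1)m+1)^{|\setu|}=\prod_{j\in\setu}(\eta_j((b-1)m+1))$, summing over all nonempty $\setu$ yields $-1+\prod_{j=1}^d(1+\eta_j((b-1)m+1))\le\prod_{j=1}^d(1+\eta_j((b-1)m+1))$, which is exactly the first product in the claimed bound. The fourth term is the only genuinely nonroutine step, because of the extra factor $|\setu|$ sitting inside the sum. Here I would invoke the elementary inequality $|\setu|\le 2^{|\setu|}$, valid for every nonempty $\setu$, to fold the cardinality into the base of the power: writing $c:=(b-1)m+\frac{b}{b-1}$, one has $|\setu|\,c^{|\setu|}\le(2c)^{|\setu|}$, and hence
\begin{equation*}
	\sum_{\emptyset\ne\setu}\eta_\setu\,|\setu|\,c^{|\setu|}
	\le
	\sum_{\emptyset\ne\setu}\prod_{j\in\setu}(2\eta_j c)
	=
	-1+\prod_{j=1}^d(1+2\eta_j c)
	\le
	\prod_{j=1}^d\Bigl(1+\eta_j\bigl(2(b-1)m+\tfrac{2b}{b-1}\bigr)\Bigr).
\end{equation*}

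Multiplying this by $b\,m/b^m$ and adding the bound obtained for the third term then produces exactly the claimed inequality. The whole argument is thus a bookkeeping assembly of the two earlier theorems; the single substantive point is the $|\setu|\le 2^{|\setu|}$ trick, which is precisely what forces the factor $2$ (and hence the weights $2(b-1)m+\frac{2b}{b-1}$) to appear in the second product. I expect no further obstacle, since every remaining manipulation is the standard conversion of a product-weighted subset-sum into a product over the coordinates.
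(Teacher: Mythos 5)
Your proposal is correct and follows essentially the same route as the paper's proof: combining Theorem~\ref{thm:CBCDBD_split} with Theorem~\ref{thm:H_bound}, cancelling the first two terms via the identity $\sum_{\emptyset\ne\setu\subseteq\{1:d\}}\prod_{j\in\setu}a_j=-1+\prod_{j=1}^d(1+a_j)$, and handling the factor $\abs{\setu}$ in the last sum with the same inequality $\abs{\setu}\le 2^{\abs{\setu}}$ that produces the factor $2$ inside the second product. Your explicit verification that the algorithm's output satisfies $\gcd(g_j,p_m)=1$ and $\deg(g_j)<m$ is slightly more detailed than the paper's remark, but this is a refinement of the same argument rather than a different approach.
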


\begin{proof}
	We remark that for reals $a_1,\ldots,a_d \in \R$ the general identity
	\begin{equation*}
		\sum_{\emptyset\ne \setu \subseteq \{1:d\}} \prod_{j \in \setu} a_j
		=
		-1 + \prod_{j=1}^d (1 + a_j)
	\end{equation*}
	holds. Using the bound on $T_{\bseta}(\bsg,p_m)$ in Theorem \ref{thm:CBCDBD_split} and inserting for $\bsg$ the generating vector
	obtained from Algorithm \ref{alg:cbcdbd}, for which the bound on $H_{d,m,\bseta}(\bsg)$ from Theorem \ref{thm:H_bound} 
	holds, yields
	\begin{align*}
		T_{\bseta}(\bsg,p_m) 
		&\le 
		\left[-1 + \prod_{j=1}^d ( 1 + \eta_j ) \right] - \left[-1 + \prod_{j=1}^d ( 1 + \eta_j ) \right]
		+ \sum_{\emptyset\ne \setu \subseteq \{1:d\}} \frac{\eta_\setu}{b^m} ((b-1)m +1)^{|\setu|}
		\\ 
		&\quad+ 
		\sum_{\emptyset\ne \setu \subseteq \{1:d\}} \frac{\eta_\setu}{b^m} \, (b \,m \abs{\setu})
		\left((b-1)m + \frac{b}{b-1} \right)^{\abs{\setu}}
		\\
		&\le \frac{1}{b^m} \left[ \prod_{j=1}^d (1 + \eta_j ((b-1)m+1)) 
		+ b \,m \prod_{j=1}^d \left(1 + \eta_j \left(2(b-1)m + \frac{2b}{b-1}\right) \right) \right]
		,
	\end{align*}
	where in the last step we used that $\abs{\setu} \le 2^{\abs{\setu}}$. Note that by the formulation of Algorithm \ref{alg:cbcdbd} we have that 
	$\gcd(g_j,p_m)=1$ for $1 \le j \le d$ such that the conditions of Theorem \ref{thm:CBCDBD_split} are satisfied.
\end{proof}

The next theorem states the main result of this paper, implying that by the construction in Algorithm \ref{alg:cbcdbd} we obtain an error convergence rate that is arbitrarily close to the optimal rate of $N^{-\alpha}$ (we know that this order is optimal due to the relation between the worst-case errors in $W_{d,\bsgamma}^\alpha$ and 
$\widetilde{W}_{d,\bsgamma}^\alpha$ stated in Section \ref{sec:walsh-polylat} and due to the fact that the rate $N^{-\alpha /2}$ is optimal in $\widetilde{W}_{d,\bsgamma}^\alpha$). Additionally, under a summability condition on the weights that is common in the related literature, the error can be bounded independently of the dimension, by which we obtain what is known as strong polynomial tractability in the context of information-based complexity.

\begin{theorem}\label{thm:main}
	Let $b$ be prime, let $m,d \in \N$ with $m\ge 4$, let $N=b^m$, and let $(\gamma_j)_{j\ge1}$ be positive product weights satisfying
	\begin{equation*}
		\sum_{j \ge 1} \gamma_j 
		<
		\infty
		.
	\end{equation*}
	Furthermore, denote by $\bsg$ the generating vector obtained by Algorithm \ref{alg:cbcdbd}, run for the weight sequence $\bseta =\bsgamma=(\gamma_j)_{j\ge 1}$.	
	Then, for any $\delta>0$ and each $\alpha>1$, the generating vector $\bsg$ satisfies
	\begin{equation*}
		e_{b^m,d,\alpha,\bsgamma^\alpha}(\bsg) 
		\le 
		\frac{1}{N^\alpha} \left( C(\bsgamma^\alpha) + \bar{C}\left(\bsgamma,\delta\right) \, N^{\alpha \delta} \right)
		,
	\end{equation*}
	with positive constants $C(\bsgamma^\alpha)$ and $\bar{C}\left(\bsgamma,\delta\right)$, which are independent of $d$ and $N$.
	
	Additionally, if Algorithm \ref{alg:cbcdbd} is run for the weights $\bseta=\bsgamma^{1/\alpha}$ with $\alpha > 1$, which satisfy
	\begin{equation*}
		\sum_{j \ge 1} \gamma_j^{1/\alpha} 
		<
		\infty
		,
	\end{equation*}	
	then, for any $\delta>0$, the resulting generating vector $\widetilde{\bsg}$ satisfies the error bound 
	\begin{equation*}
		e_{b^m,d,\alpha,\bsgamma}(\widetilde{\bsg})
		\le 
		\frac{1}{N^\alpha} \left( K(\bsgamma) + \bar{K}(\bsgamma^{1/\alpha},\delta) \, N^{\alpha \delta} \right)
		,
	\end{equation*}
	with positive constants $K(\bsgamma)$ and $\bar{K}(\bsgamma^{1/\alpha},\delta)$, which are independent of $d$ and $N$.
\end{theorem}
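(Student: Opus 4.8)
The plan is to prove both assertions with a single template that combines \RefProp{prop:trunc_error}, \RefThm{thm:cbcdbd-T}, and an elementary power inequality; the two statements differ only in which weight sequence is fed into each ingredient. I will describe the template for the second claim, where \RefAlg{alg:cbcdbd} is run with $\bseta=\bsgamma^{1/\alpha}$ and we bound $e_{b^m,d,\alpha,\bsgamma}(\widetilde{\bsg})$. The first claim is then obtained in the same way after replacing $\bsgamma^{1/\alpha}$ by $\bsgamma$ and the space weights $\bsgamma$ by $\bsgamma^{\alpha}$.

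\textbf{Reduction and the first constant.} First I would apply \RefProp{prop:trunc_error} with space weights $\bsgamma$ to get
\begin{equation*}
	e_{b^m,d,\alpha,\bsgamma}(\widetilde{\bsg})
	\le
	T_{\alpha,\bsgamma}(\widetilde{\bsg},p_m)
	+ \frac{1}{N^{\alpha}} \sum_{\emptyset\ne\setu\subseteq\{1:d\}} \gamma_{\setu} (2\mu_b(\alpha))^{\abs{\setu}} .
\end{equation*}
For the second summand I use the identity $\sum_{\emptyset\ne\setu\subseteq\{1:d\}}\prod_{j\in\setu}a_j = -1+\prod_{j=1}^d(1+a_j)$ (already employed in the proof of \RefThm{thm:cbcdbd-T}) together with $\log(1+x)\le x$. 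Since $\sum_j\gamma_j^{1/\alpha}<\infty$ and $\alpha>1$ force $\sum_j\gamma_j<\infty$, the product $\prod_{j=1}^d(1+2\mu_b(\alpha)\gamma_j)$ is bounded, uniformly in $d$, by $\exp(2\mu_b(\alpha)\sum_{j\ge1}\gamma_j)$, and this furnishes the $\delta$-independent constant $K(\bsgamma)$.

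\textbf{Passing from $\alpha$ to $1$ in the quality measure.} The core algebraic observation is that $(r_{\alpha,\bsgamma}(\bsk))^{-1}=((r_{1,\bsgamma^{1/\alpha}}(\bsk))^{-1})^{\alpha}$, which follows at once from the product form of $r_{\alpha,\bsgamma}$ and $\gamma_{\setu}^{1/\alpha}=(\gamma^{1/\alpha})_{\setu}$. Since $A_{p_m}(\widetilde{\bsg})$ is a finite set, $T_{\alpha,\bsgamma}(\widetilde{\bsg},p_m)$ is a finite sum of $\alpha$-th powers of the nonnegative terms $(r_{1,\bsgamma^{1/\alpha}}(\bsk))^{-1}$, so the power inequality $\sum_i a_i^{\alpha}\le(\sum_i a_i)^{\alpha}$ (valid for $\alpha\ge1$ and $a_i\ge0$) gives
\begin{equation*}
	T_{\alpha,\bsgamma}(\widetilde{\bsg},p_m)
	\le
	\left( T_{\bsgamma^{1/\alpha}}(\widetilde{\bsg},p_m) \right)^{\alpha} .
\end{equation*}
Because $\widetilde{\bsg}$ was produced by \RefAlg{alg:cbcdbd} run with $\bseta=\bsgamma^{1/\alpha}$, I may now insert the bound of \RefThm{thm:cbcdbd-T} for $T_{\bsgamma^{1/\alpha}}(\widetilde{\bsg},p_m)$.

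\textbf{The main obstacle: taming the $m$-dependence.} What remains, and is the crux, is to show that the two products in \RefThm{thm:cbcdbd-T}, each of the shape $\prod_{j=1}^d(1+\gamma_j^{1/\alpha}(Am+B))$ with fixed $A,B>0$, grow more slowly than any positive power of $N=b^m$, uniformly in $d$. I would establish a splitting lemma: given $\delta>0$, choose $J=J(\delta)$ so large that $\sum_{j>J}\gamma_j^{1/\alpha}\le \delta(\ln b)/(2A)$, which is possible by summability. The head $\prod_{j\le J}(1+\gamma_j^{1/\alpha}(Am+B))\le(1+\gamma_1^{1/\alpha}(Am+B))^{J}$ is a degree-$J$ polynomial in $m$ and hence $\le C_\delta\, b^{m\delta/2}$; the tail is bounded via $\log(1+x)\le x$ by $\exp((Am+B)\sum_{j>J}\gamma_j^{1/\alpha})\le C'_\delta\, b^{m\delta/2}$. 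Multiplying, each product is $\le\widehat{C}_\delta N^{\delta}$, the extra polynomial factor $b\,m$ in \RefThm{thm:cbcdbd-T} being absorbed in the same way. Consequently $T_{\bsgamma^{1/\alpha}}(\widetilde{\bsg},p_m)\le N^{-1}\bar{K}(\bsgamma^{1/\alpha},\delta)^{1/\alpha}N^{\delta}$, and raising to the $\alpha$-th power yields $T_{\alpha,\bsgamma}(\widetilde{\bsg},p_m)\le N^{-\alpha}\bar{K}(\bsgamma^{1/\alpha},\delta)N^{\alpha\delta}$; combining with $K(\bsgamma)$ from the reduction step proves the stated bound. The first claim follows identically, using that $\sum_j\gamma_j<\infty$ likewise forces $\sum_j\gamma_j^{\alpha}<\infty$ (so the truncation constant $C(\bsgamma^{\alpha})$ is finite) and that $(r_{\alpha,\bsgamma^{\alpha}}(\bsk))^{-1}=((r_{1,\bsgamma}(\bsk))^{-1})^{\alpha}$.
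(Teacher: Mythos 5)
Your proposal is correct and follows essentially the same route as the paper's proof: Proposition~\ref{prop:trunc_error} to split off the truncation term, the identity $(r_{\alpha,\bseta^\alpha}(\bsk))^{-1}=((r_{1,\bseta}(\bsk))^{-1})^{\alpha}$ combined with the power (Jensen) inequality to reduce $T_{\alpha,\cdot}$ to $(T_{\bseta})^{\alpha}$, and then Theorem~\ref{thm:cbcdbd-T} for the vector produced by Algorithm~\ref{alg:cbcdbd}. The only difference is that where the paper invokes \cite[Lemma 3]{HN03} to bound $\prod_{j\ge 1}(1+\eta_j\,4bm)$ by $\widehat{C}\,b^{m\delta/2}$, you re-prove that fact via an explicit head/tail splitting of the product, which is a sound, self-contained substitute (your bookkeeping of the factor $b\,m$ and the two $b^{m\delta/2}$ factors needs the trivial adjustment of splitting $\delta$ into three parts rather than two, but this is immaterial since $\delta>0$ is arbitrary).
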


\begin{proof}
	We know from Proposition \ref{prop:trunc_error} that
	\begin{align*}
		e_{b^m,d,\alpha,\bseta^\alpha}(\bsg) 
		\le
		\frac{1}{N^{\alpha}} \sum_{\emptyset\neq \setu \subseteq \{1:d\}} \eta^\alpha_{\setu} (2 \mu_b(\alpha))^{\abs{\setu}} 
		+
		T_{\alpha,\bseta^\alpha}(\bsg,p_m)
		.
	\end{align*}
	For the special case of product weights $\eta_{\setu}=\prod_{j\in\setu} \eta_j$, $u\subseteq \{1 \mcol d\}$, this yields
	\begin{align*}
		e_{b^m,d,\alpha,\bseta^\alpha}(\bsg)
		\le 	
		\frac{1}{N^{\alpha}} \prod_{j=1}^d \left(1 + 2\mu_b (\alpha) \eta_j^\alpha \right) 
		+
		T_{\alpha,\bseta^\alpha}(\bsg,p_m)
		.
	\end{align*}
	Since $\alpha>1$, we can use an inequality, sometimes referred to as Jensen's inequality, which states that 
	$\sum_{i=1}^M y_i \le \left(\sum_{i=1}^M y^p_i \right)^{1/p}$ for non-negative $y_1,\ldots,y_M$ and $0 \le p \le 1$. This yields
	\begin{align*}
		 T_{\alpha,\bseta^\alpha}(\bsg,p_m)
		 &=
		 \sum_{\bszero \ne \bsk \in A_p(\bsg)} (r_{\alpha,\bseta^\alpha}(\bsk))^{-1}
		 =
		 \sum_{\bszero \ne \bsk \in A_p(\bsg)} (r_{1,\bseta}(\bsk))^{-\alpha}
		 \\
		 &\le
		 \left(\sum_{\bszero \ne \bsk \in A_p(\bsg)} (r_{1,\bseta}(\bsk))^{-1}\right)^\alpha
		 =
		 \left(T_{\bseta}(\bsg,p_m)\right)^\alpha,
	\end{align*}
	and by Theorem \ref{thm:cbcdbd-T} we know that Algorithm \ref{alg:cbcdbd} run for weights $\bseta$ yields $\bsg$ which satisfy
	\begin{equation*}
		T_{\bseta}(\bsg,p_m) 
		\le 
		\frac{1}{b^m} \left[ \prod_{j=1}^d (1 + \eta_j ((b-1)m+1)) + b \,m \prod_{j=1}^d \left(1 + \eta_j  
		\left(2(b-1)m + \frac{2b}{b-1}\right) \right) \right]
		.
	\end{equation*}
	From this, we deduce, using either the weights $\bseta=\bsgamma^{1/\alpha}$ or $\bseta=\bsgamma$ for Algorithm \ref{alg:cbcdbd}, that
	\begin{align*}
		b^m \,T_{\bseta}(\bsg,p_m) 
		&\le
		\prod_{j=1}^d (1 + \eta_j ((b-1)m+1)) + b \,m \prod_{j=1}^d \left(1 + \eta_j  \left(2(b-1)m + \frac{2b}{b-1}\right) \right)
		\\
		&\le
		\prod_{j=1}^d (1 + \eta_j 4 b m) + b \,m \prod_{j=1}^d (1 + \eta_j 4 b m)
		=
		(1 + b \, m) \prod_{j=1}^d (1 + \eta_j 4 b m)
		\\
		&\le
		\widetilde{C}(\delta/2) \, b^{m \delta / 2} \prod_{j=1}^d (1 + \eta_j 4 b m)
		\le
		\widetilde{C}(\delta/2) \, b^{m \delta / 2} \prod_{j=1}^\infty (1 + \eta_j 4 b m)
	\end{align*}
	for arbitrary $\delta>0$, where $\widetilde{C}(\delta/2)$ is a constant depending only on $\delta$. Due to the imposed condition on the weights, i.e.,
	$\sum_{j \ge 1} \gamma_j < \infty$ or $\sum_{j \ge 1} \gamma_j^{1/\alpha} < \infty$, we can use the result in \cite[Lemma 3]{HN03} to see that the 
	last product can be bounded by $\widehat{C} (\bsgamma) b^{m \delta / 2}$ or $\widehat{C} (\bsgamma^{1/\alpha}) b^{m \delta / 2}$, respectively,
	where $\widehat{C} (\bsgamma)$ and $\widehat{C} (\bsgamma^{1/\alpha})$ may depend on the weights $\bsgamma$ or $\bsgamma^{1/\alpha}$, but are independent of the dimension. Choosing $\bseta=\bsgamma$, this yields that
	\begin{equation*}
		\left(T_{\bseta}(\bsg,p_m)\right)^\alpha
		=
		\left(T_{\bsgamma}(\bsg,p_m)\right)^\alpha 
		\le
		\frac{1}{N^\alpha}\left(\widetilde{C}(\delta/2)\right)^\alpha \left(\widehat{C} (\bsgamma)\right)^{\alpha} N^{\alpha\delta}
		,
	\end{equation*}
	and similarly, for $\bseta=\bsgamma^{1/\alpha}$,
	\begin{equation*}
		\left(T_{\bseta}(\bsg,p_m)\right)^\alpha
		=
		\left(T_{\bsgamma^{1/\alpha}}(\bsg,p_m)\right)^\alpha 
		\le 
		\frac{1}{b^{m\alpha}}\left(\widetilde{C}(\delta/2)\right)^\alpha \left(\widehat{C} (\bsgamma^{1/\alpha})\right)^\alpha N^{\alpha\delta}
		.
	\end{equation*}
	Setting then $C(\bsgamma^\alpha)=\prod_{j=1}^d (1 + 2\mu_b (\alpha) \gamma_j^\alpha)$ and 
	$\bar{C}(\bsgamma,\delta) = (\widetilde{C}(\delta/2))^\alpha \left(\widehat{C} (\bsgamma)\right)^\alpha$,
	and, similarly, $K(\bsgamma)=\prod_{j=1}^d (1 + 2\mu_b (\alpha) \gamma_j)$ and 
	$\bar{K}(\bsgamma^{1/\alpha},\delta) = (\widetilde{C}(\delta/2))^\alpha \left(\widehat{C} (\bsgamma^{1/\alpha})\right)^\alpha$, 
	we obtain the claimed error estimates, where the first stated bound holds simultaneously for all $\alpha > 1$.
\end{proof}

The result in Theorem \ref{thm:main} consists of two statements regarding the worst-case error behavior of generating vectors constructed by Algorithm \ref{alg:cbcdbd}.
On the one hand, when run with weights $\bsgamma^{1/\alpha}$, and hence depending on the parameter $\alpha$, the algorithm yields typical error bounds for the
worst-case error in the space $W_{d,\bsgamma}^\alpha$. We emphasize that this type of result could also be obtained by formulating and using an analogous CBC-DBD algorithm which is instead directly based on the search criterion $e_{b^m,d,\alpha,\bsgamma}$. On the other hand, when run with weights $\bsgamma$, thus independently of $\alpha$, the algorithm produces generating vectors for which bounds on the worst-case errors in the spaces $W_{d,\bsgamma^\alpha}^\alpha$ hold simultaneously for all $\alpha > 1$.

\section{Fast implementation of the construction scheme} \label{sec:fast_impl}

In this section we discuss the efficient implementation of the introduced CBC-DBD algorithm and analyze its complexity. Throughout this section, we will consider the implementation for the special case of $b=2$ and product weights $\gamma_{\setu} = \prod_{j\in\setu} \gamma_j$ for a sequence of positive reals $(\gamma_j)_{j\ge1}$. Choosing the prime base as $b=2$ allows for the use of bitwise operations which facilitate an efficient implementation of the construction scheme. We remark that the major challenge for the implementation of the algorithm for $b>2$ is an efficient computation of the polynomial multiplication modulo $b$, all other steps of the algorithm can be implemented analogously.

\subsection{Implementation and cost analysis of the CBC-DBD algorithm}

Let $q \in \F_2[x]$, $m,d \in \N$ be positive integers and let $\bsgamma = (\gamma_{\setu})_{\setu \subseteq \{1:d\}}$, where $\gamma_\setu=\prod_{j \in \setu} \gamma_j$ 
with positive reals $(\gamma_j)_{j\ge1}$. We recall that for $b=2$ and integers $w \in \{1 \mcol m\}$, $r \in \{1 \mcol d\}$ the digit-wise quality function $h_{r,w,m,\bsgamma}$
in Definition \ref{def:h_r,w}, which is used in Algorithm \ref{alg:cbcdbd}, is given by
\begin{align*}
	h_{r,w,m,\bsgamma}(q)
	&=
	\sum_{t=w}^m \frac{1}{2^{t-w}} \sum_{\substack{\ell=1 \\ \ell \equiv 1 \tpmod 2}}^{2^t-1}
	\left( 1 - \gamma_r \left( \floor{\log_2 \left(v_w \left( \frac{\ell(x) \, q(x)}{x^w} \right) \right)} + 1 \right) \right) \times
	\notag \\
	&\quad\times \prod_{j=1}^{r-1} \left(1 - \gamma_j \left( \floor{\log_2 \left(v_t \left( \frac{\ell(x) \, g_j(x)}{x^t} \right) \right)} + 1 \right) \right)
	,
\end{align*}
where the polynomials $g_1,\ldots,g_{r-1} \in \F_2[x]$ have been determined in the previous steps of the algorithm. Since the cost of a single evaluation of the function
$h_{r,w,m,\bsgamma}$ is crucial for the total cost of Algorithm \ref{alg:cbcdbd}, we are interested in an efficient evaluation procedure which will be discussed in the following paragraph.

For integers $t \in \{2,\ldots,m\}$ and odd $\ell \in \{1,\ldots,2^t - 1\}$, we define the term $a(r, t, \ell)$ as
\begin{equation*}
	a(r,t,\ell)
	:=
	\prod_{j=1}^r \left(1 - \gamma_j \left( \floor{\log_2 \left(v_t \left( \frac{\ell(x) \, g_j(x)}{x^t} \right) \right)} + 1 \right) \right)
\end{equation*}
and observe that for the evaluation of $h_{r,w,m,\bsgamma}(q)$ we can compute and store the term $a(r-1,t,\ell)$ since it is independent of $w$ and $q$.
This way we can rewrite $h_{r,w,m,\bsgamma}(q)$ as
\begin{equation} \label{eq:fast-eval-h_rw}
	h_{r,w,m,\bsgamma}(q)
	=
	\sum_{t=w}^m \frac{1}{2^{t-w}} \sum_{\substack{\ell=1 \\ \ell \equiv 1 \tpmod 2}}^{2^t-1}
	a(r-1,t,\ell) \left( 1 - \gamma_r \left( \floor{\log_2 \left(v_w \left( \frac{\ell(x) \, q(x)}{x^w} \right) \right)} + 1 \right) \right)
	,
\end{equation}
where in Algorithm \ref{alg:cbcdbd}, after having determined $g_{r,w}$, the values of $a(r,w,\ell)$ for odd integers ${\ell\in\{1,\ldots,2^w-1\}}$ 
are computed via the recurrence relation
\begin{equation*}
	a(r,w,\ell) 
	=
	a(r-1,w,\ell) \left( 1 - \gamma_r \left( \floor{\log_2 \left(v_w \left( \frac{\ell(x) \, g_{r,w}(x)}{x^w} \right) \right)} + 1 \right) \right)
	.
\end{equation*}

For an algorithmic implementation, we introduce the vector $\bsv=(v(1),\ldots,v(2^m-1)) \in \R^{2^m-1}$ whose components, for the current $r \in \{1,\ldots,d\}$, are given by 
\begin{equation*}
	v(\ell \, 2^{m-t}) 
	= 
	\prod_{j=1}^r \left( 1 - \gamma_j \left( \floor{\log_2 \left(v_t \left( \frac{\ell(x) \, g_j(x)}{x^t} \right) \right)} + 1 \right) \right)
	=
	a(r,t,\ell)
\end{equation*}
for each $t=1,\ldots,m$ and corresponding odd index $\ell\in\{1,\ldots,2^t - 1\}$. Furthermore, note that for the evaluation of $h_{r,w,m,\bsgamma}$ we do not 
require the values of $a(r,t,\ell)$ for $t=2,\ldots,w-1$. Combining these findings leads to the following fast implementation of Algorithm \ref{alg:cbcdbd}.

\begin{algorithm}[H]
	\caption{Fast component-by-component digit-by-digit algorithm}
	\label{alg:fast-cbcdbd}
	\vspace{5pt}
	\textbf{Input:} Integers $m, d \in \N$ and positive weights $(\gamma_j)_{j=1}^d$. \\
	\vspace{-10pt}
	\begin{algorithmic}
		\FOR{$\ell=1$ \TO $2^m-1$}
		\STATE $v(\ell) = 1 - \gamma_1 \left( \floor{\log_2 \left(v_m \left( \frac{\ell}{x^m} \right) \right)} + 1 \right)$
		\ENDFOR
		\vspace{5pt}
		\STATE Set $g_{1,m} = 1$ and $g_{2,1} = \cdots = g_{d,1} = 1$.
		\vspace{5pt}
		\FOR{$r=2$ \TO $d$}
		\FOR{$w=2$ \TO $m$}
		\STATE $g^{\ast} = \underset{g \in \F_2}{\argmin} \;  h_{r,w,m,\bsgamma}(g_{r,w-1} + g \,x^{w-1})$ 
		with $h_{r,w,m,\bsgamma}$ evaluated using \eqref{eq:fast-eval-h_rw}
		\STATE $g_{r,w} = g_{r,w-1} + g^{\ast} x^{w-1}$
		\FOR{$\ell=1$ \TO $2^w-1$ \textbf{in steps of} $2$}
		\STATE $v(\ell \, 2^{m-w}) = v(\ell \, 2^{m-w}) \left( 1 - \gamma_r \left( \floor{\log_2 \left(v_w \left( \frac{\ell \, g_{r,w}}{x^w} \right) \right)} + 1 \right) \right)$
		\ENDFOR
		\ENDFOR
		\ENDFOR
		\vspace{5pt}
		\STATE Set $\bsg = (g_1,\ldots,g_d)$ with $g_r := g_{r,m}$ for $r=1,\ldots,d$.
	\end{algorithmic}
	\vspace{5pt}
	\textbf{Return:} Generating vector $\bsg = (g_1,\ldots,g_d) \in (G_{2,m}^\ast)^d$ for $N=2^m$.
\end{algorithm}

The computational complexity of Algorithm \ref{alg:fast-cbcdbd} is then summarized in the following theorem.

\begin{theorem} \label{thm:cost-cbcdbd}
	Let $m,d \in \N$ and let $\bsgamma = (\gamma_j)_{j=1}^d$ be a given sequence of positive weights. Then Algorithm \ref{alg:fast-cbcdbd} constructs a generating  
	vector $\bsg=(g_1,\ldots,g_d) \in (G_{2,m}^\ast)^d$ using $\calO(d \,m\, 2^m)$ operations and requiring $\calO(2^m)$ memory.
\end{theorem}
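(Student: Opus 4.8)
The plan is to bound the memory and the operation count of Algorithm~\ref{alg:fast-cbcdbd} separately, and to show that the running time is dominated by the repeated evaluation of the quality function through the fast formula~\eqref{eq:fast-eval-h_rw}.

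For the memory bound, I would note that the only object whose size scales with $N=2^m$ is the vector $\bsv=(v(1),\ldots,v(2^m-1))\in\R^{2^m-1}$. The indexing convention $v(\ell\,2^{m-t})=a(r,t,\ell)$ is a bijection between the pairs $(t,\ell)$ with $\ell$ odd, $1\le\ell\le 2^t-1$, and the indices $n\in\{1,\ldots,2^m-1\}$ (write $n=\ell\,2^{m-t}$ with $\ell$ the odd part of $n$ and $m-t$ its number of trailing zeros), so that all values $a(r,t,\ell)$ currently needed reside in a single array of length $2^m-1$. As the remaining storage (the $d$ weights and the partial generating vector) is of lower order in the regime $d=\calO(2^m)$, the total memory is $\calO(2^m)$.

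For the operation count, I would first treat the initialization loop: it performs $2^m-1$ iterations, each evaluating one $v_m(\ell/x^m)$ and one floor-logarithm, which for $b=2$ reduce to a leading-bit search and thus cost $\calO(1)$; the loop therefore costs $\calO(2^m)$. Next I would analyze a single execution of the inner body indexed by $(r,w)$ with $r\in\{2,\ldots,d\}$, $w\in\{2,\ldots,m\}$. The selection of $g^\ast$ evaluates $h_{r,w,m,\bsgamma}$ for the two choices $g\in\{0,1\}$; by~\eqref{eq:fast-eval-h_rw} each evaluation reads the stored values $a(r-1,t,\ell)$ (which at this point still reside in $\bsv$, since the current component has so far only overwritten entries with $t<w$) and sums over $t\in\{w,\ldots,m\}$ and odd $\ell\in\{1,\ldots,2^t-1\}$. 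Using~\eqref{eq:mod_prop} each term needs only $\ell q \bmod x^w$, so it is one multiply--add together with one base-$2$ floor-logarithm, hence $\calO(1)$; as there are $2^{t-1}$ odd indices for each $t$, the number of terms per evaluation is $\sum_{t=w}^m 2^{t-1}=2^m-2^{w-1}<2^m$, and the argmin costs $\calO(2^m)$. The subsequent in-place update of $\bsv$ touches only the $2^{w-1}$ entries with $t=w$, costing $\calO(2^{w-1})$, which is absorbed by the evaluation cost.

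Summing the body over the double loop gives $\sum_{r=2}^d\sum_{w=2}^m\calO(2^m)=\calO(d\,m\,2^m)$, which dominates the $\calO(2^m)$ initialization and yields the claimed running time. I expect the point most worth spelling out to be the argument that the product over $j=1,\ldots,r-1$ in Definition~\ref{def:h_r,w} does not inflate the cost by a factor of $d$: this is exactly what precomputing $a(r-1,t,\ell)$ in $\bsv$ achieves, collapsing that product into a single stored factor that is refreshed in place once each component is fixed. The only other delicate point is the $\calO(1)$-per-term accounting, which hinges on base-$2$ arithmetic letting $v_w$ and $\floor{\log_2(\cdot)}$ be realized by bitwise manipulations on machine words.
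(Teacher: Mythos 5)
Your proposal is correct and follows essentially the same route as the paper's proof: both bound the cost of a single evaluation of $h_{r,w,m,\bsgamma}$ via the precomputation identity \eqref{eq:fast-eval-h_rw} by $\calO\bigl(\sum_{t=w}^m 2^{t-1}\bigr)$, sum this over the two choices of $g$ and the double loop in $(r,w)$ to get $\calO(d\,m\,2^m)$, and observe that initializing, updating, and storing the vector $\bsv$ costs $\calO(2^m)$. Your additional details (the bijection $n=\ell\,2^{m-t}$ behind the indexing of $\bsv$, the check that the $a(r-1,t,\ell)$ values for $t\ge w$ have not yet been overwritten, and the $\calO(1)$-per-term accounting via base-$2$ bit operations) merely make explicit what the paper's proof leaves implicit.
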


\begin{proof} 
	Due to the relation in \eqref{eq:fast-eval-h_rw}, the cost of evaluating $h_{r,w,m,\bsgamma}(q)$ can be reduced to $\calO(\sum_{t=w}^m 2^{t-1})$ operations.
	Thus, the number of calculations in the inner loop over $w = 2,\dots,m$ of Algorithm \ref{alg:fast-cbcdbd} is of order
	\begin{equation*}
	\calO\left(\sum_{w=2}^m 2 \sum_{t=w}^m 2^{t-1}\right) 
	=
	\calO\left(\sum_{w=2}^m \sum_{t=w}^m 2^t \right)
	=
	\calO\left(m \, 2^m - 2(2^m - 1) \right)
	=
	\calO\left(m \, 2^m \right)
	.
	\end{equation*}
	Hence, the outer loop over $r=2,\ldots,d$, which is the main cost of Algorithm \ref{alg:fast-cbcdbd}, can be executed in $\calO\left(d \,m\, 2^m \right)$ operations. 
	Furthermore, we observe that initialization and updating of the vector $\bsv \in \R^{2^m-1}$ can both be executed in $\calO(2^m)$ operations. 
	Additionally, storing the vector $\bsv$ requires $\calO(2^m)$ of memory.
\end{proof}

We remark that the running time of Algorithm \ref{alg:fast-cbcdbd} can be reduced further by precomputing and storing the $2^m-1$ values 
\begin{equation*}
\left( \floor{\log_2 \left(v_m \left( \frac{\ell}{x^m} \right) \right)} + 1 \right)
\quad \text{for} \quad
\ell=1,\ldots,2^m-1 
.
\end{equation*}

The derivation leading to the fast implementation in Algorithm \ref{alg:fast-cbcdbd} is using arguments that were used in \cite{EKNO2020}, where
a component-by-component digit-by-digit construction for lattice rules in weighted Korobov spaces has been studied. 
Theorem \ref{thm:cost-cbcdbd} shows that the fast implementation of the component-by-component digit-by-digit construction for 
polynomial lattice rules achieves the same computational complexity as state-of-the-art component-by-component methods, see, e.g., 
\cite{DKS13}. In these constructions the speed-up of the algorithm is achieved by reordering the involved matrices to be of circulant structure
and by then employing a fast matrix-vector product which uses fast Fourier transformations (FFTs). We refer to 
\cite{NC06a} for further details on an implementation for polynomial lattice rules. In contrast, our method does not rely on the use of 
FFTs and the low time complexity of the resulting algorithm is due to the smaller search space for the components $g_j$ of the generating vector 
$\bsg$. Furthermore, we remark that the mentioned state-of-the-art CBC constructions mainly use a primitive or irreducible modulus 
$p \in \F_2[x]$ since then the multiplicative group of $\F_2[x] / (p)$ is cyclic. While for reducible polynomials, such as $p(x)=x^m$, 
a fast CBC construction is theoretically possible by using a similar strategy as for the fast CBC construction for lattice rules with a 
composite number of points, there are, to the best of our knowledge, no explicit implementations of such an algorithm known. 
On the other hand, the CBC-DBD construction considered in this article immediately yields a fast algorithm for 
the construction of polynomial lattice rules in $\calO(d \,m\, 2^m)$ operations for $p(x)=x^m$.

\section{Numerical results} \label{sec:num}

In this section, we illustrate the error convergence behavior of the polynomial lattice rules constructed by the CBC-DBD algorithm and visualize the computational complexity of the construction by means of numerical experiments. As in the previous section, we consider polynomial lattice rules in the weighted Walsh space $W_{d,\bsgamma}^\alpha$ for prime base $b=2$ and product weights $\gamma_\setu = \prod_{j \in \setu} \gamma_j$ given in terms of positive reals 
$(\gamma_j)_{j \ge 1}$. \\

In order to demonstrate the performance of the algorithm, we compare the worst-case errors of the constructed polynomial lattice rules as well as the algorithm's computation times to the corresponding quantities obtained by a state-of-the-art component-by-component algorithm, see, e.g., \cite{DKS13}. As remarked in the previous section, no fast CBC construction is known for the case $p(x)=x^m$ such that instead we compare our algorithm with a CBC construction with primitive polynomial $p \in \F_2[x]$ of degree $m$ as the modulus. Both constructions deliver polynomial lattice rules for the spaces $W_{d,\bsgamma}^\alpha$ consisting of $2^m$ cubature points.

The different algorithms have been implemented in MATLAB R2019b and Python 3.6.3. In Python the implementations are available in double-precision as well as arbitrary-precision floating-point arithmetic with the latter provided by the multiprecision Python library mpmath.    

\subsection{Error convergence behavior}

Let $m,d \in \N$, $\alpha > 1$, and a sequence of positive weights $\bsgamma = (\gamma_j)_{j \ge 1}$ be given. By Theorem \ref{thm:wce_dig_net}, the worst-case 
error of a polynomial lattice point set $P(\bsg,p)=\{\bsx_0,\ldots,\bsx_{b^m-1}\}$ in base $b=2$ with generating vector $\bsg$ and modulus $p \in \F_2[x]$, with $\deg(p)=m$, in the space $W_{d,\bsgamma}^\alpha$ is given by
\begin{align*}
	e_{b^m,d,\alpha,\bsgamma}(\bsg)
	&=
	\sum_{\bszero \ne \bsk \in \calD(\bsg,p)} \left( r_{\alpha,\bsgamma}(\bsk) \right)^{-1}
	=
	\frac{1}{b^m} \sum_{n=0}^{b^m-1} \sum_{\bszero \ne \bsk \in \N_0^d} \gamma_{\supp(\bsk)} \frac{\wal_{\bsk} (\bsx_n)}{r_{\alpha}(\bsk)}
	.
\end{align*}
For $b=2$ and product weights $\gamma_{\setu} = \prod_{j \in \setu} \gamma_j$, this expression then equals
\begin{equation*}
	e_{2^m,d,\alpha,\bsgamma}(\bsg)
	=
	-1 + \frac{1}{2^m} \sum_{n=0}^{2^m-1} \prod_{j=1}^d \left(1 + \gamma_j \,\phi_\alpha(x_{n,j}) \right)
\end{equation*}
with $\phi_\alpha: [0,1] \to \R$ given by
\begin{equation*}
	\phi_\alpha(x)
	=
	\left\{\begin{array}{ll}
	\mu_2(\alpha), & {\text{if }} x=0 , \\ 
	\mu_2(\alpha) - 2^{(1+t)(\alpha-1)} (\mu_2(\alpha)+1), & {\text{otherwise, with }} t=\floor{\log_2(x)} ,
	\end{array}\right.
\end{equation*}
see, e.g., \cite{DP05}. For the polynomial lattice rules constructed by the algorithms considered, we will use this worst-case error expression as a measure of quality. 

In particular, we consider the convergence behavior of the worst-case error $e_{2^m,d,\alpha,\bsgamma^\alpha}(\bsg)$ for generating vectors $\bsg$ obtained by the CBC-DBD algorithm (with modulus $p(x)=x^m$) and compare it with the error rates for polynomial lattice rules constructed by the standard fast CBC algorithm (with primitive polynomial $p \in \F_2[x]$ of degree $m$) which uses the worst-case error $e_{2^m,d,\alpha,\bsgamma^\alpha}$ as the quality criterion. We display the computation results for dimension $d=100$ for different sequences of product weights $\bsgamma = (\gamma_j)_{j\ge1}$, different values of $m$, and different smoothness parameters $\alpha$. We stress that the almost optimal error rates of $\calO(N^{-\alpha+\delta})$, guaranteed by Theorem \ref{thm:main}, may not always be visible for the weights and ranges of $N$ considered in our numerical experiments. The graphs shown are therefore to be understood as an illustration of the pre-asymptotic behavior of the worst-case error.

\begin{remark}
	We stress that in these numerical experiments we compare the CBC-DBD algorithm with modulus $p(x)=x^m$ to the CBC construction with a primitive modulus polynomial. Both constructions yield polynomial lattices consisting of $N=b^m$ points that have been constructed for the same function space $W_{d,\bsgamma}^\alpha$ such that the comparison is valid. To the best of our knowledge, there is no known implementation of the fast CBC algorithm for polynomial lattice rules based on the modulus $p(x)=x^m$. The reason for the elusiveness of such an implementation is the more involved structure of the group of units of the factor ring $\F_b[x]/(x^m)$ when factored into cyclic groups, see, e.g.,
	\cite{SG85}. While for lattice rules the group of integer units modulo $N=b^m$ is either cyclic (for odd $b$) or can be factored into two cyclic subgroups (for $b=2$), which makes the corresponding generator easily computable, see, e.g., \cite{NC06a}, the ring $\F_b[x]/(x^m)$ factors into a larger number of cyclic subgroups (for sufficiently large $m$) and their generating elements are less studied in the context of QMC methods.
\end{remark}

The results in Figure \ref{fig:cbc-dbd} show that the CBC-DBD algorithm constructs generating vectors of good polynomial lattice rules which have worst-case errors that are comparable to those of polynomial lattice rules obtained by the fast CBC algorithm. We observe identical asymptotic error rates for both algorithms considered, and also
note that the CBC-DBD construction always delivers slightly higher error values. The latter behavior can easily be explained by the fact that the CBC construction is directly tailored to the space $W_{d,\bsgamma^\alpha}^{\alpha}$ for a particular $\alpha$ since $e_{b^m,d,\alpha,\bsgamma^\alpha}$ is used as the quality measure. In contrast, the CBC-DBD construction is independent of the smoothness parameter $\alpha$ and constructs polynomial lattices which have a good quality for all $\alpha > 1$. This in turn also means that the CBC-DBD algorithm only needs to be executed once while the CBC construction has to be run for all considered $\alpha$. Additionally, we observe that the pre-asymptotic error decay is determined by the weight sequence $\bsgamma = (\gamma_j)_{j\ge1}$. The faster the weights $\gamma_j$ decay, the closer the error rate is to the optimal rate of $\calO(N^{-\alpha})$ for the space $W_{d,\bsgamma^\alpha}^{\alpha}$.

\newpage

\begin{figure}[H]
	\centering
	\textbf{Error convergence in the space $W_{d,\bsgamma^\alpha}^{\alpha}$ with $d=100, \alpha=1.5,2,3$.} \par\medskip 
	\hspace{-0.25cm}
	\centering
	\begin{subfigure}[b]{0.5\textwidth}
		\centering
		\begin{tikzpicture}
			\begin{axis}[%
			width=0.8\textwidth,
			height=0.8\textwidth,
			at={(0\textwidth,0\textwidth)},
			scale only axis,
			xmode=log,
			xmin=42.6666666666667,
			xmax=98304,
			xminorticks=true,
			xlabel style={font=\color{white!15!black}},
			xlabel={Number of points $N=2^m$},
			ymode=log,
			ymin=1e-14,
			ymax=1,
			yminorticks=true,
			ylabel style={font=\color{white!15!black}},
			ylabel={Worst-case error $e_{N,d,\alpha,\mathbf{\gamma^{\alpha}}}(\bsg)$},
			axis background/.style={fill=white},
			axis x line*=bottom,
			axis y line*=left,
			xmajorgrids,
			xminorgrids,
			ymajorgrids,
			yminorgrids,
			minor grid style={opacity=0},
			legend style={at={(0.03,0.03)}, anchor=south west, legend cell align=left, align=left, draw=white!15!black}
			]
			\addplot [color=mycolor1, line width=0.9pt, mark=o, mark options={solid, mycolor1}, forget plot]
			table[row sep=crcr]{%
				64	0.0390712472682382\\
				128	0.0160728287242077\\
				256	0.0068443006743524\\
				512	0.0028984531538587\\
				1024	0.00115975507813964\\
				2048	0.000520313806071635\\
				4096	0.000185139231596003\\
				8192	7.63203299667065e-05\\
				16384	3.20499837535847e-05\\
				32768	1.37950245039247e-05\\
				65536	5.46684649549108e-06\\
			};
			\addplot [color=mycolor2, line width=0.9pt, mark=triangle, mark options={solid, mycolor2}, forget plot]
			table[row sep=crcr]{%
				64	0.0376151186365635\\
				128	0.0152933248325304\\
				256	0.00618948770005742\\
				512	0.00247447287359493\\
				1024	0.0009979148524534\\
				2048	0.000395689821292074\\
				4096	0.00015735848167537\\
				8192	6.27210633415442e-05\\
				16384	2.49346925597582e-05\\
				32768	9.8131128921238e-06\\
				65536	3.86074203669712e-06\\
			};
			\addplot [color=mycolor3, dashed, line width=0.9pt]
			table[row sep=crcr]{%
				64	0.0781424945364764\\
				128	0.0321717904637741\\
				256	0.0132453424706303\\
				512	0.00545319656243037\\
				1024	0.00224511769434734\\
				2048	0.000924330051881541\\
				4096	0.000380552897944936\\
				8192	0.000156676187082197\\
				16384	6.45046397785344e-05\\
				32768	2.65569939532393e-05\\
				65536	1.09336929909821e-05\\
			};
			\addlegendentry{\scriptsize $\mathcal{O}(N^{-1.28})$}
			
			\addplot [color=mycolor4, line width=0.9pt, mark=o, mark options={solid, mycolor4}, forget plot]
			table[row sep=crcr]{%
				64	0.00173189414099363\\
				128	0.000497893995938344\\
				256	0.000167316619582138\\
				512	5.72477209622562e-05\\
				1024	1.51301729566877e-05\\
				2048	6.12795467448452e-06\\
				4096	1.16678203187784e-06\\
				8192	3.52440735216716e-07\\
				16384	1.21191211480429e-07\\
				32768	4.6743013027553e-08\\
				65536	1.16138439342712e-08\\
			};
			\addplot [color=mycolor5, line width=0.9pt, mark=triangle, mark options={solid, mycolor5}, forget plot]
			table[row sep=crcr]{%
				64	0.00154678921195924\\
				128	0.00044006393769157\\
				256	0.000122429006985678\\
				512	3.45517665647055e-05\\
				1024	9.87865886386179e-06\\
				2048	2.71347371275297e-06\\
				4096	7.54074982760637e-07\\
				8192	2.13669729864059e-07\\
				16384	6.0070244689273e-08\\
				32768	1.65029649618247e-08\\
				65536	4.58542657148393e-09\\
			};
			\addplot [color=darkgray, dashed, line width=0.9pt]
			table[row sep=crcr]{%
				64	0.00346378828198726\\
				128	0.00105243868416859\\
				256	0.000319773350378977\\
				512	9.71600504150757e-05\\
				1024	2.95211448529786e-05\\
				2048	8.9697153275182e-06\\
				4096	2.72536154872724e-06\\
				8192	8.28074838506167e-07\\
				16384	2.51602558378812e-07\\
				32768	7.6447012321933e-08\\
				65536	2.32276878685424e-08\\
			};
			\addlegendentry{\scriptsize $\mathcal{O}(N^{-1.72})$}
			
			\addplot [color=mycolor6, line width=0.9pt, mark=o, mark options={solid, mycolor6}, forget plot]
			table[row sep=crcr]{%
				64	1.28697975158607e-05\\
				128	1.74188571381488e-06\\
				256	3.66679710174919e-07\\
				512	9.70115733301211e-08\\
				1024	8.92657933377613e-09\\
				2048	3.2712163062905e-09\\
				4096	1.59477617206956e-10\\
				8192	2.44761243620679e-11\\
				16384	6.5810458046318e-12\\
				32768	2.58160714759e-12\\
				65536	1.86828486963937e-13\\
			};
			\addplot [color=mycolor7, line width=0.9pt, mark=triangle, mark options={solid, mycolor7}, forget plot]
			table[row sep=crcr]{%
				64	9.63120981937429e-06\\
				128	1.3243470327549e-06\\
				256	1.77417418283065e-07\\
				512	2.43736888765063e-08\\
				1024	3.40110026357102e-09\\
				2048	4.47401526916463e-10\\
				4096	6.06306431225999e-11\\
				8192	8.47943711288475e-12\\
				16384	1.16290107298636e-12\\
				32768	1.56993422619375e-13\\
				65536	2.21601217849075e-14\\
			};
			\addplot [color=black, dashed, line width=0.9pt]
			table[row sep=crcr]{%
				64	2.57395950317214e-05\\
				128	4.23437042835346e-06\\
				256	6.96587996136595e-07\\
				512	1.14594328619067e-07\\
				1024	1.88516888382896e-08\\
				2048	3.10125445419784e-09\\
				4096	5.10181303764536e-10\\
				8192	8.39289282949877e-11\\
				16384	1.38069838168672e-11\\
				32768	2.27135989928539e-12\\
				65536	3.73656973927874e-13\\
			};
			\addlegendentry{\scriptsize $\mathcal{O}(N^{-2.6})$}
			
			\end{axis}
		\end{tikzpicture}
		\caption{Weight sequence $\bsgamma=(\gamma_j)_{j=1}^d$ with $\gamma_j = 1/j^2$.}    
	\end{subfigure}
	\begin{subfigure}[b]{0.5\textwidth}  
		\centering 
		\begin{tikzpicture}
			\begin{axis}[%
			width=0.8\textwidth,
			height=0.8\textwidth,
			at={(0\textwidth,0\textwidth)},
			scale only axis,
			xmode=log,
			xmin=42.6666666666667,
			xmax=98304,
			xminorticks=true,
			xlabel style={font=\color{white!15!black}},
			xlabel={Number of points $N=2^m$},
			ymode=log,
			ymin=1e-15,
			ymax=1,
			yminorticks=true,
			ylabel style={font=\color{white!15!black}},
			ylabel={Worst-case error $e_{N,d,\alpha,\mathbf{\gamma^{\alpha}}}(\bsg)$},
			axis background/.style={fill=white},
			axis x line*=bottom,
			axis y line*=left,
			xmajorgrids,
			xminorgrids,
			ymajorgrids,
			yminorgrids,
			minor grid style={opacity=0},
			legend style={at={(0.03,0.03)}, anchor=south west, legend cell align=left, align=left, draw=white!15!black}
			]
			\addplot [color=mycolor1, line width=0.9pt, mark=o, mark options={solid, mycolor1}, forget plot]
			table[row sep=crcr]{%
				64	0.0131178336032584\\
				128	0.00491437900626413\\
				256	0.00194519098231873\\
				512	0.000757426493570501\\
				1024	0.000276393073757065\\
				2048	0.000106368717079493\\
				4096	3.69171449170946e-05\\
				8192	1.44511369578044e-05\\
				16384	5.85279710148979e-06\\
				32768	2.4613672733137e-06\\
				65536	8.22403863440284e-07\\
			};
			\addplot [color=mycolor2, line width=0.9pt, mark=triangle, mark options={solid, mycolor2}, forget plot]
			table[row sep=crcr]{%
				64	0.0126883996208652\\
				128	0.00477596267110885\\
				256	0.00179072880659425\\
				512	0.000667928409719759\\
				1024	0.000249528553805854\\
				2048	9.3065957832874e-05\\
				4096	3.45161015000271e-05\\
				8192	1.28671774179377e-05\\
				16384	4.77544778112109e-06\\
				32768	1.76952645794544e-06\\
				65536	6.55128760870105e-07\\
			};
			\addplot [color=mycolor3, dashed, line width=0.9pt]
			table[row sep=crcr]{%
				64	0.0262356672065167\\
				128	0.00996814658375531\\
				256	0.00378736113448457\\
				512	0.00143899412418154\\
				1024	0.00054674059745053\\
				2048	0.000207732106669012\\
				4096	7.89270603689721e-05\\
				8192	2.99880502748333e-05\\
				16384	1.13938509185812e-05\\
				32768	4.32905232467889e-06\\
				65536	1.64480772688057e-06\\
			};
			\addlegendentry{\scriptsize $\mathcal{O}(N^{-1.4})$}
			
			\addplot [color=mycolor4, line width=0.9pt, mark=o, mark options={solid, mycolor4}, forget plot]
			table[row sep=crcr]{%
				64	0.000712462383360234\\
				128	0.000184831946815374\\
				256	5.35579779770074e-05\\
				512	1.57046632787247e-05\\
				1024	3.73387111880909e-06\\
				2048	1.07497298654939e-06\\
				4096	2.29271186127242e-07\\
				8192	6.7039957830092e-08\\
				16384	2.16976361387627e-08\\
				32768	8.18186574673242e-09\\
				65536	1.51773888340811e-09\\
			};
			\addplot [color=mycolor5, line width=0.9pt, mark=triangle, mark options={solid, mycolor5}, forget plot]
			table[row sep=crcr]{%
				64	0.000670708492223081\\
				128	0.000175125591950862\\
				256	4.54652696243151e-05\\
				512	1.18025820939106e-05\\
				1024	3.06345545838972e-06\\
				2048	7.94986360971004e-07\\
				4096	2.05111977025723e-07\\
				8192	5.33005202542887e-08\\
				16384	1.376958062069e-08\\
				32768	3.56403343791969e-09\\
				65536	9.2060582961137e-10\\
			};
			\addplot [color=darkgray, dashed, line width=0.9pt]
			table[row sep=crcr]{%
				64	0.00142492476672047\\
				128	0.00038604279131196\\
				256	0.000104587301873436\\
				512	2.83349513560168e-05\\
				1024	7.67654824215092e-06\\
				2048	2.07974215920286e-06\\
				4096	5.63446918110405e-07\\
				8192	1.52649898509437e-07\\
				16384	4.13561433490269e-08\\
				32768	1.12042694388004e-08\\
				65536	3.03547776681623e-09\\
			};
			\addlegendentry{\scriptsize $\mathcal{O}(N^{-1.88})$}
			
			\addplot [color=mycolor6, line width=0.9pt, mark=o, mark options={solid, mycolor6}, forget plot]
			table[row sep=crcr]{%
				64	5.92493821050277e-06\\
				128	7.49632168235576e-07\\
				256	1.07863795593144e-07\\
				512	1.8789107659036e-08\\
				1024	1.81540878550344e-09\\
				2048	3.09512021283855e-10\\
				4096	2.58832445037591e-11\\
				8192	3.87306385928902e-12\\
				16384	8.28402037064509e-13\\
				32768	2.89342697876167e-13\\
				65536	1.4113316510823e-14\\
			};
			\addplot [color=mycolor7, line width=0.9pt, mark=triangle, mark options={solid, mycolor7}, forget plot]
			table[row sep=crcr]{%
				64	5.54168171547323e-06\\
				128	7.02954709134525e-07\\
				256	8.89967547930034e-08\\
				512	1.12675174304235e-08\\
				1024	1.42670185953979e-09\\
				2048	1.80786104448765e-10\\
				4096	2.28473605289084e-11\\
				8192	2.8964272716977e-12\\
				16384	3.66433575093592e-13\\
				32768	4.64189489401149e-14\\
				65536	5.87953689698115e-15\\
			};
			\addplot [color=black, dashed, line width=0.9pt]
			table[row sep=crcr]{%
				64	1.18498764210055e-05\\
				128	1.62707790207184e-06\\
				256	2.2341013571396e-07\\
				512	3.06759059760903e-08\\
				1024	4.21203453660106e-09\\
				2048	5.7834428594703e-10\\
				4096	7.94110566238359e-11\\
				8192	1.09037403279396e-11\\
				16384	1.49716624099736e-12\\
				32768	2.05572279398341e-13\\
				65536	2.8226633021646e-14\\
			};
			\addlegendentry{\scriptsize $\mathcal{O}(N^{-2.86})$}
			\end{axis}
		\end{tikzpicture}
		\caption{Weight sequence $\bsgamma=(\gamma_j)_{j=1}^d$ with $\gamma_j = 1/j^3$.}
	\end{subfigure}
	\vskip\baselineskip
	\hspace{-0.25cm}
	\centering
	\begin{subfigure}[b]{0.5\textwidth}
		\centering
		\begin{tikzpicture}
			\begin{axis}[%
			width=0.8\textwidth,
			height=0.8\textwidth,
			at={(0\textwidth,0\textwidth)},
			scale only axis,
			xmode=log,
			xmin=42.6666666666667,
			xmax=98304,
			xminorticks=true,
			xlabel style={font=\color{white!15!black}},
			xlabel={Number of points $N=2^m$},
			ymode=log,
			ymin=0.000001,
			ymax=1000000000000,
			yminorticks=true,
			ylabel style={font=\color{white!15!black}},
			ylabel={Worst-case error $e_{N,d,\alpha,\mathbf{\gamma^{\alpha}}}(\bsg)$},
			axis background/.style={fill=white},
			axis x line*=bottom,
			axis y line*=left,
			xmajorgrids,
			xminorgrids,
			ymajorgrids,
			yminorgrids,
			minor grid style={opacity=0},
			legend style={at={(0.03,0.03)}, anchor=south west, legend cell align=left, align=left, draw=white!15!black}
			]
			\addplot [color=mycolor1, line width=0.9pt, mark=o, mark options={solid, mycolor1}, forget plot]
			table[row sep=crcr]{%
				64	7243051451.11146\\
				128	3621525725.15113\\
				256	1810762862.20832\\
				512	905381430.73359\\
				1024	452690715.081421\\
				2048	226345357.269875\\
				4096	113172678.465645\\
				8192	56586339.032267\\
				16384	28293169.4120152\\
				32768	14146584.5780807\\
				65536	7073292.22615669\\
			};
			\addplot [color=mycolor2, line width=0.9pt, mark=triangle, mark options={solid, mycolor2}, forget plot]
			table[row sep=crcr]{%
				64	7243051451.09558\\
				128	3621525725.08\\
				256	1810762862.09309\\
				512	905381430.631196\\
				1024	452690714.936109\\
				2048	226345357.135407\\
				4096	113172678.269114\\
				8192	56586338.8874833\\
				16384	28293169.2380297\\
				32768	14146584.4572523\\
				65536	7073292.09608587\\
			};
			\addplot [color=mycolor3, dashed, line width=0.9pt]
			table[row sep=crcr]{%
				64	14486102902.2229\\
				128	7243051429.95876\\
				256	3621525704.40303\\
				512	1810762846.91334\\
				1024	905381420.812582\\
				2048	452690709.084247\\
				4096	226345353.881102\\
				8192	113172676.61004\\
				16384	56586338.1397645\\
				32768	28293168.9872545\\
				65536	14146584.4523134\\
			};
			\addlegendentry{\scriptsize $\mathcal{O}(N^{-1})$}
			
			\addplot [color=mycolor4, line width=0.9pt, mark=o, mark options={solid, mycolor4}, forget plot]
			table[row sep=crcr]{%
				64	10959.825529302\\
				128	5479.69603909232\\
				256	2739.64859547817\\
				512	1369.60526438866\\
				1024	684.715475922968\\
				2048	342.227485853939\\
				4096	171.080148439262\\
				8192	85.4672433901463\\
				16384	42.724700020511\\
				32768	21.3278592624591\\
				65536	10.6585951036708\\
			};
			\addplot [color=mycolor5, line width=0.9pt, mark=triangle, mark options={solid, mycolor5}, forget plot]
			table[row sep=crcr]{%
				64	10959.7656361317\\
				128	5479.51083503195\\
				256	2739.44210618333\\
				512	1369.45795946705\\
				1024	684.527632517571\\
				2048	342.109645550625\\
				4096	170.941903843961\\
				8192	85.3932036462252\\
				16384	42.6417266330489\\
				32768	21.285663384164\\
				65536	10.6181010963113\\
			};
			\addplot [color=darkgray, dashed, line width=0.9pt]
			table[row sep=crcr]{%
				64	21919.6510586041\\
				128	10955.2746207525\\
				256	5475.3628009509\\
				512	2736.54461799129\\
				1024	1367.70415376978\\
				2048	683.568117231064\\
				4096	341.642137743683\\
				8192	170.750137901208\\
				16384	85.339618191816\\
				32768	42.6520910767205\\
				65536	21.3171902073417\\
			};
			\addlegendentry{\scriptsize $\mathcal{O}(N^{-1})$}
			
			\addplot [color=mycolor6, line width=0.9pt, mark=o, mark options={solid, mycolor6}, forget plot]
			table[row sep=crcr]{%
				64	8.57791666330145\\
				128	4.28103520423578\\
				256	2.06796576730862\\
				512	0.971054173490731\\
				1024	0.496436300834007\\
				2048	0.215202611343453\\
				4096	0.106730887187267\\
				8192	0.0459859657856498\\
				16384	0.0248841835926798\\
				32768	0.00934014463870056\\
				65536	0.00501384329233759\\
			};
			\addplot [color=mycolor7, line width=0.9pt, mark=triangle, mark options={solid, mycolor7}, forget plot]
			table[row sep=crcr]{%
				64	8.46766996208136\\
				128	4.0806523512221\\
				256	1.93692177499604\\
				512	0.906951394555028\\
				1024	0.418189445550568\\
				2048	0.190921292344443\\
				4096	0.0859625367018254\\
				8192	0.0379177033253264\\
				16384	0.0166350232569472\\
				32768	0.00714293789454527\\
				65536	0.00304175692103127\\
			};
			\addplot [color=black, dashed, line width=0.9pt]
			table[row sep=crcr]{%
				64	17.1558333266029\\
				128	8.1487447010377\\
				256	3.87052257611543\\
				512	1.83843592624904\\
				1024	0.873227474703245\\
				2048	0.414768995584408\\
				4096	0.197008597051487\\
				8192	0.0935759126776304\\
				16384	0.0444470523850443\\
				32768	0.0211116344921435\\
				65536	0.0100276865846752\\
			};
			\addlegendentry{\scriptsize $\mathcal{O}(N^{-1.07})$}
			\end{axis}
		\end{tikzpicture}
		\caption{Weight sequence $\bsgamma=(\gamma_j)_{j=1}^d$ with $\gamma_j = (0.95)^j$.}
	\end{subfigure}
	\begin{subfigure}[b]{0.5\textwidth}  
		\centering 
		\begin{tikzpicture}
			\begin{axis}[%
			width=0.8\textwidth,
			height=0.8\textwidth,
			at={(0\textwidth,0\textwidth)},
			scale only axis,
			xmode=log,
			xmin=42.6666666666667,
			xmax=98304,
			xminorticks=true,
			xlabel style={font=\color{white!15!black}},
			xlabel={Number of points $N=2^m$},
			ymode=log,
			ymin=1e-12,
			ymax=1,
			yminorticks=true,
			ylabel style={font=\color{white!15!black}},
			ylabel={Worst-case error $e_{N,d,\alpha,\mathbf{\gamma^{\alpha}}}(\bsg)$},
			axis background/.style={fill=white},
			axis x line*=bottom,
			axis y line*=left,
			xmajorgrids,
			xminorgrids,
			ymajorgrids,
			yminorgrids,
			minor grid style={opacity=0},
			legend style={at={(0.03,0.03)}, anchor=south west, legend cell align=left, align=left, draw=white!15!black}
			]
			\addplot [color=mycolor1, line width=0.9pt, mark=o, mark options={solid, mycolor1}, forget plot]
			table[row sep=crcr]{%
				64	0.282174633321939\\
				128	0.13284536582465\\
				256	0.0593909612017073\\
				512	0.0276701354810547\\
				1024	0.0124506955709893\\
				2048	0.0059881638624573\\
				4096	0.002623626543244\\
				8192	0.0011690456214415\\
				16384	0.000510186962064498\\
				32768	0.000247867654332359\\
				65536	0.000101159613448046\\
			};
			\addplot [color=mycolor2, line width=0.9pt, mark=triangle, mark options={solid, mycolor2}, forget plot]
			table[row sep=crcr]{%
				64	0.274712481779285\\
				128	0.124417585308346\\
				256	0.0559772905760882\\
				512	0.0254974002775106\\
				1024	0.0113237967059648\\
				2048	0.00501661471403071\\
				4096	0.00221532998966387\\
				8192	0.000973192798708207\\
				16384	0.000429486397615613\\
				32768	0.000187353189500543\\
				65536	8.07839905190598e-05\\
			};
			\addplot [color=mycolor3, dashed, line width=0.9pt]
			table[row sep=crcr]{%
				64	0.564349266643877\\
				128	0.255268298867404\\
				256	0.115463788575768\\
				512	0.0522269570151156\\
				1024	0.0236234673459449\\
				2048	0.010685443712208\\
				4096	0.00483327471173977\\
				8192	0.00218620256381623\\
				16384	0.000988870266038787\\
				32768	0.000447289020349818\\
				65536	0.000202319226896092\\
			};
			\addlegendentry{\scriptsize $\mathcal{O}(N^{-1.14})$}
			
			\addplot [color=mycolor4, line width=0.9pt, mark=o, mark options={solid, mycolor4}, forget plot]
			table[row sep=crcr]{%
				64	0.0111700804239152\\
				128	0.00471207225113345\\
				256	0.00141560088221411\\
				512	0.000570796290539375\\
				1024	0.000199757941505261\\
				2048	9.25230177509226e-05\\
				4096	2.72459541360662e-05\\
				8192	9.44914547577809e-06\\
				16384	2.89641573116343e-06\\
				32768	1.46751113529886e-06\\
				65536	3.82284119367565e-07\\
			};
			\addplot [color=mycolor5, line width=0.9pt, mark=triangle, mark options={solid, mycolor5}, forget plot]
			table[row sep=crcr]{%
				64	0.00945313308880435\\
				128	0.00316951406710338\\
				256	0.00109688680255328\\
				512	0.000381105276844875\\
				1024	0.000123143143042282\\
				2048	4.29822557594017e-05\\
				4096	1.38276588312289e-05\\
				8192	4.80757770222531e-06\\
				16384	1.54392319759943e-06\\
				32768	5.05565921043526e-07\\
				65536	1.61957971127494e-07\\
			};
			\addplot [color=darkgray, dashed, line width=0.9pt]
			table[row sep=crcr]{%
				64	0.0223401608478305\\
				128	0.0079894943028627\\
				256	0.00285727661722161\\
				512	0.00102184560847563\\
				1024	0.000365441848110695\\
				2048	0.000130692683163539\\
				4096	4.67395223639288e-05\\
				8192	1.67154189349267e-05\\
				16384	5.97792223879755e-06\\
				32768	2.13787966859995e-06\\
				65536	7.64568238735131e-07\\
			};
			\addlegendentry{\scriptsize $\mathcal{O}(N^{-1.48})$}
			
			\addplot [color=mycolor6, line width=0.9pt, mark=o, mark options={solid, mycolor6}, forget plot]
			table[row sep=crcr]{%
				64	0.000140326391649271\\
				128	5.22382192248969e-05\\
				256	4.94191767928417e-06\\
				512	1.57769490883062e-06\\
				1024	3.63557481789908e-07\\
				2048	1.53192720884102e-07\\
				4096	1.80126021224901e-08\\
				8192	3.44699584558495e-09\\
				16384	4.71780877474591e-10\\
				32768	2.8928012044642e-10\\
				65536	3.60374117983146e-11\\
			};
			\addplot [color=mycolor7, line width=0.9pt, mark=triangle, mark options={solid, mycolor7}, forget plot]
			table[row sep=crcr]{%
				64	6.5512951579925e-05\\
				128	1.08514549896987e-05\\
				256	2.07633554526966e-06\\
				512	4.1358368190811e-07\\
				1024	8.0791551612054e-08\\
				2048	1.40823660331657e-08\\
				4096	2.45320848306968e-09\\
				8192	4.39452153240671e-10\\
				16384	8.4772239825475e-11\\
				32768	1.53147764523675e-11\\
				65536	2.64712334202646e-12\\
			};
			\addplot [color=black, dashed, line width=0.9pt]
			table[row sep=crcr]{%
				64	0.000280652783298542\\
				128	6.15362145887965e-05\\
				256	1.34924929708976e-05\\
				512	2.95837772580286e-06\\
				1024	6.48656907763745e-07\\
				2048	1.42225173046636e-07\\
				4096	3.1184436033961e-08\\
				8192	6.83753114814163e-09\\
				16384	1.49920403084707e-09\\
				32768	3.28716999953848e-10\\
				65536	7.20748235966293e-11\\
			};
			\addlegendentry{\scriptsize $\mathcal{O}(N^{-2.19})$}
		\end{axis}
		\end{tikzpicture}
		\caption{Weight sequence $\bsgamma=(\gamma_j)_{j=1}^d$ with $\gamma_j = (0.7)^j$.}
	\end{subfigure}
	\vskip\baselineskip
	\begin{tikzpicture}
		\hspace{0.05\linewidth}
		\begin{customlegend}[
		legend columns=5,legend style={align=left,draw=none,column sep=1.5ex},
		legend entries={CBC-DBD, standard fast CBC \quad, $\alpha=1.5$, $\alpha=2$, $\alpha=3$}
		]
		\addlegendimage{color=gray, mark=o,solid,line width=1.0pt,line legend}
		\addlegendimage{color=gray, mark=triangle,solid,line width=1.0pt}  
		\addlegendimage{color=mycolor-alpha1, only marks,mark=square*, solid, line width=5pt}
		\addlegendimage{color=mycolor-alpha2, only marks,mark=square*, solid, line width=5pt}
		\addlegendimage{color=mycolor-alpha3, only marks,mark=square*, solid, line width=5pt}
		\end{customlegend}
	\end{tikzpicture}
	\caption{Convergence results of the worst-case error $e_{2^m,d,\alpha,\bsgamma^\alpha}(\bsg)$ in the weighted space $W_{d,\bsgamma^\alpha}^\alpha$
		for smoothness parameters $\alpha=1.5,2,3$ with dimension $d=100$. The generating vectors $\bsg$ are constructed via the component-by-component digit-by-digit algorithm and the standard CBC construction for polynomial lattice rules for $N=2^m$, respectively.}  
	\label{fig:cbc-dbd}
\end{figure}

\newpage

\subsection{Computational complexity}

We demonstrate the computational complexity of Algorithm \ref{alg:fast-cbcdbd} which was proved in Theorem~\ref{thm:cost-cbcdbd}. For this purpose, we measure and compare the computation times of implementations of Algorithm~\ref{alg:fast-cbcdbd} and the standard fast CBC algorithm for polynomial lattice rules with primitive modulus $p \in \F_2[x]$, cf., e.g., \cite{NC06a}. For all timings we perform three independent measurements and then select the lowest time out of these three runs. We consider multiple values of $m,d \in \N$ and fix the positive weight sequence $\bsgamma = (\gamma_j)_{j \ge 1}$ with $\gamma_j = 1 / j^2$. Note that the chosen weight sequence does not affect the computation times.

In Table \ref{tab:cbcdbd_times} we display the timing results for the two considered algorithms. Furthermore, Figure \ref{fig:times_cbc-dbd} provides a graphical illustration of the running times of both algorithms. We remark that the measured times only indicate the duration for the construction of the generating vectors but do not include the calculation of the corresponding worst-case error. All timings were performed on an Intel Core i5 CPU with 2.3 GHz using Python 3.6.3.

\begin{table}[H]
	\captionof{table}{Computation times (in seconds) for constructing the generating vector $\bsg$ of a polynomial lattice rule with $2^m$ points in $d$ dimensions using the component-by-component digit-by-digit algorithm (\textbf{bold font}) and the standard fast CBC construction (normal font). For the CBC algorithm we constructed the polynomial lattice rules with smoothness parameter $\alpha=2$.}
	\label{tab:cbcdbd_times}
	\centering
	\begin{tabular}{p{2cm}p{2.1cm}p{2.1cm}p{2.1cm}p{2.1cm}p{2.1cm}} 
		\toprule[1.2pt]
		& $d=50$ & $d=200$ & $d=500$ & $d=1000$ & $d=2000$ \\ 
		\toprule[1.2pt]
		\multirow{2}{6em}{$m=10$}
		& 0.007 & 0.025 & 0.061 & 0.12 & 0.239 \\
		& \textbf{0.068} & \textbf{0.268} & \textbf{0.67} & \textbf{1.338} & \textbf{2.682} \\
		\midrule
		\multirow{2}{6em}{$m=12$}
		& 0.025 & 0.089 & 0.213 & 0.421 & 0.827 \\
		& \textbf{0.107} & \textbf{0.433} & \textbf{1.082} & \textbf{2.175} & \textbf{4.318} \\
		\midrule
		\multirow{2}{6em}{$m=14$}
		& 0.117 & 0.399 & 0.953 & 1.839 & 3.763 \\
		& \textbf{0.203} & \textbf{0.816} & \textbf{2.037} & \textbf{4.077} & \textbf{8.147} \\
		\midrule
		\multirow{2}{6em}{$m=16$}
		& 0.586 & 2.0 & 4.804 & 9.523 & 18.836 \\
		& \textbf{0.573} & \textbf{2.31} & \textbf{5.82} & \textbf{11.606} & \textbf{23.083} \\
		\midrule
		\multirow{2}{6em}{$m=18$}
		& 2.858 & 9.466 & 22.715 & 44.56 & 88.198 \\
		& \textbf{2.556} & \textbf{10.36} & \textbf{26.019} & \textbf{51.599} & \textbf{103.685} \\
		\midrule
		\multirow{2}{6em}{$m=20$}
		& 13.703 & 44.914 & 106.861 & 211.073 & 416.24 \\
		& \textbf{16.812} & \textbf{67.824} & \textbf{169.935} & \textbf{340.589} & \textbf{687.135} \\
		\midrule
	\end{tabular}
\end{table}

The timings displayed in Table \ref{tab:cbcdbd_times} and Figure \ref{fig:times_cbc-dbd} confirm that the computational complexity of both algorithms depends on $m$ and $d$ in a similar way and the measured times are in accordance with Proposition \ref{thm:cost-cbcdbd}. Additionally, the linear dependence of the construction cost on the dimension $d$ is well observable. The measured construction times for Algorithm \ref{alg:fast-cbcdbd} are slightly higher than for the fast CBC algorithm but in general both algorithms can be executed in comparable time. This is especially remarkable since the fast CBC construction is based on fast Fourier transformations which rely on compiled and optimized code via Python's Discrete Fourier Transform (numpy.fft) library while the CBC-DBD construction does not make use of any compiled libraries. Lastly, we remark that the slight parabola shape of the timing curve of the CBC-DBD algorithm in Figure~\ref{fig:times_cbc-dbd}, which one might suspect, is not to be observed for larger values of $m$.
  
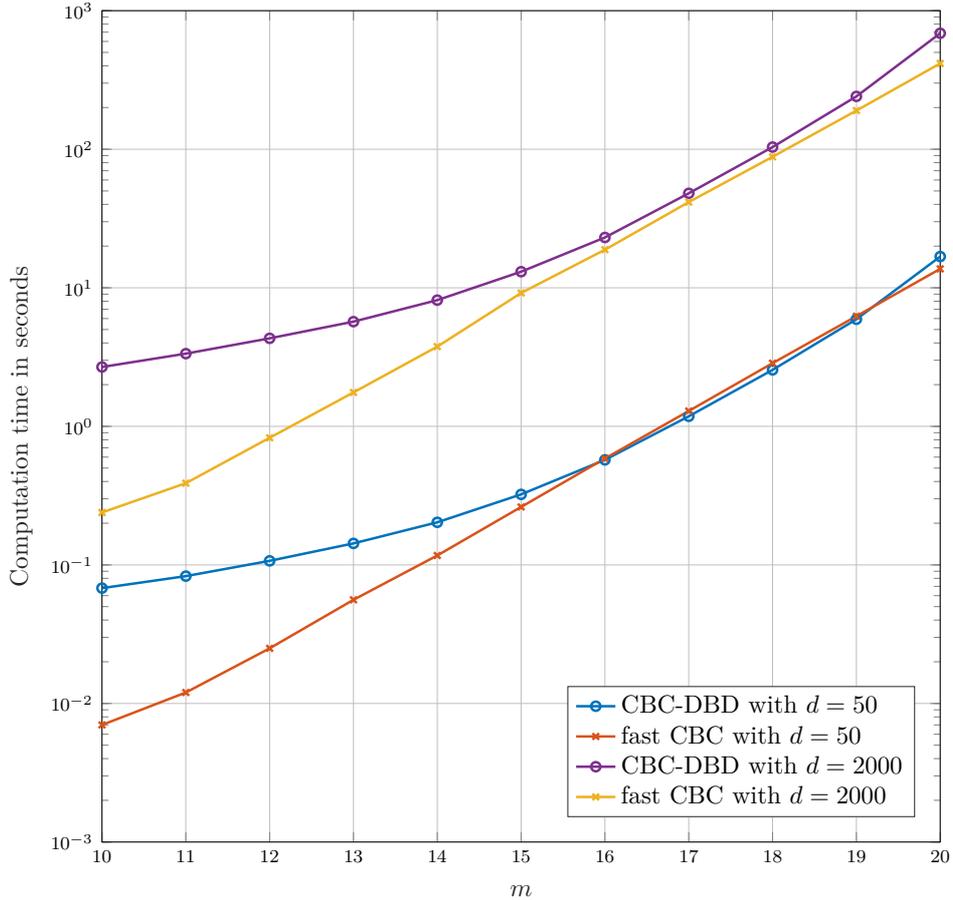
\begin{figure}[H]
	\centering
	\textbf{Computation times for CBC-DBD and fast CBC algorithm.} \par\medskip 
	\hspace{-0.25cm}
	\resizebox{.8\linewidth}{!}{
	\begin{tikzpicture}
		\begin{axis}[%
		width=0.8\textwidth,
		height=0.8\textwidth,
		at={(0\textwidth,0\textwidth)},
		scale only axis,
		xmin=10,
		xmax=20,
		xlabel style={font=\color{white!15!black}},
		xlabel={$m$},
		ymode=log,
		ymin=0.001,
		ymax=1000,
		yminorticks=true,
		ylabel style={font=\color{white!15!black}},
		ylabel={Computation time in seconds},
		axis background/.style={fill=white},
		xmajorgrids,
		ymajorgrids,
		yminorgrids,
		minor grid style={opacity=0},
		legend style={at={(0.97,0.03)}, anchor=south east, legend cell align=left, align=left, draw=white!15!black}
		]
		\addplot [color=mycolor1-time, line width=1.1pt, mark=o, mark options={solid, mycolor1-time}]
		table[row sep=crcr]{%
			10	0.068\\
			11	0.083\\
			12	0.107\\
			13	0.143\\
			14	0.203\\
			15	0.323\\
			16	0.573\\
			17	1.182\\
			18	2.556\\
			19	5.932\\
			20	16.812\\
		};
		\addlegendentry{CBC-DBD with $d=50$}
		
		\addplot [color=mycolor2-time, line width=1.1pt, mark=x, mark options={solid, mycolor2-time}]
		table[row sep=crcr]{%
			10	0.007\\
			11	0.012\\
			12	0.025\\
			13	0.056\\
			14	0.117\\
			15	0.262\\
			16	0.586\\
			17	1.291\\
			18	2.858\\
			19	6.239\\
			20	13.703\\
		};
		\addlegendentry{fast CBC with $d=50$}
		
		\addplot [color=mycolor3-time, line width=1.1pt, mark=o, mark options={solid, mycolor3-time}]
		table[row sep=crcr]{%
			10	2.682\\
			11	3.348\\
			12	4.318\\
			13	5.699\\
			14	8.147\\
			15	13.069\\
			16	23.083\\
			17	48.079\\
			18	103.685\\
			19	240.728\\
			20	687.135\\
		};
		\addlegendentry{CBC-DBD with $d=2000$}
		
		\addplot [color=mycolor4-time, line width=1.1pt, mark=x, mark options={solid, mycolor4-time}]
		table[row sep=crcr]{%
			10	0.239\\
			11	0.389\\
			12	0.827\\
			13	1.758\\
			14	3.763\\
			15	9.173\\
			16	18.836\\
			17	41.551\\
			18	88.198\\
			19	190.144\\
			20	416.24\\
		};
		\addlegendentry{fast CBC with $d=2000$}
		\end{axis}
		\end{tikzpicture}
	}
	\caption{Computation times (in seconds) for constructing the generating vector $\bsg$ of a polynomial lattice rule with $2^m$ points in $d \in \{50,2000\}$ dimensions using the component-by-component digit-by-digit algorithm (circles) and the standard fast CBC construction (crosses).}  
	\label{fig:times_cbc-dbd}
\end{figure}

\section{Conclusion}

In this paper, we presented an algorithm for constructing good polynomial lattice rules for numerical integration in weighted Walsh spaces. In particular, we studied a component-by-component digit-by-digit (CBC-DBD) construction with quality measure independent of the smoothness parameter $\alpha$, similar to \cite{EKNO2020}, where such an algorithm was analyzed for ordinary lattice rules. The construction algorithm is formulated for the special case of product weights and yields polynomial lattice rules which admit error convergence rates that are arbitrarily close to the optimal convergence order. Furthermore, the proven error bounds become independent of the dimension if the weights satisfy suitable summability conditions. In addition to these theoretical results, we derived a fast implementation of the considered algorithm which exhibits the same computational complexity as the state-of-the-art fast CBC algorithm, but does not rely on the use of fast Fourier transformations (FFTs). The considered algorithm is, to the best of our knowledge, the first construction method for good polynomial lattice rules with modulus $p(x)=x^m$ that requires only 
$\calO(d \,m\, 2^m)$ operations. Extensive numerical experiments illustrated our findings and proved that the considered method is competitive with the standard fast CBC algorithm.

\bigskip

\bigskip

\begin{small}
\noindent\textbf{Authors' addresses:}\\
	
	\noindent Adrian Ebert\\
	Johann Radon Institute for Computational and Applied Mathematics (RICAM) \\
	Austrian Academy of Sciences\\
	Altenbergerstr. 69, 4040 Linz, Austria.\\
	\texttt{adrian.ebert@oeaw.ac.at}
	
	\medskip
	
	\noindent Peter Kritzer\\
	Johann Radon Institute for Computational and Applied Mathematics (RICAM) \\
	Austrian Academy of Sciences\\
	Altenbergerstr. 69, 4040 Linz, Austria.\\
	\texttt{peter.kritzer@oeaw.ac.at}
	
	\medskip
	
	\noindent Onyekachi Osisiogu\\
	Johann Radon Institute for Computational and Applied Mathematics (RICAM)\\
	Austrian Academy of Sciences\\
	Altenbergerstr. 69, 4040 Linz, Austria.\\
	\texttt{onyekachi.osisiogu@oeaw.ac.at}
	
	\medskip
	
	\noindent Tetiana Stepaniuk\\
	Institute of Mathematics\\
	University of L\"ubeck\\
	Ratzeburger Allee 160, 23562 L\"ubeck, Germany,\\
	\texttt{stepaniuk@math.uni-luebeck.de}
	
\end{small}
\end{document}